   \edef\Gin@extensions{\Gin@extensions,.mps}
\tikzset{
  column sep/.code=\def\pgfmatrixcolumnsep{\pgf@matrix@xscale*(#1)},
  row sep/.code   =\def\pgfmatrixrowsep{\pgf@matrix@yscale*(#1)},
  matrix xscale/.code=%
    \pgfmathsetmacro\pgf@matrix@xscale{\pgf@matrix@xscale*(#1)},
  matrix yscale/.code=%
    \pgfmathsetmacro\pgf@matrix@yscale{\pgf@matrix@yscale*(#1)},
  matrix scale/.style={/tikz/matrix xscale={#1},/tikz/matrix yscale={#1}}}
\def\pgf@matrix@xscale{1}
\def\pgf@matrix@yscale{1}
\newtheorem{theorem}{Theorem}
\newtheorem{lemma}{Lemma}
\newtheorem{corollary}{Corollary}
\newtheorem{proposition}{Proposition}
\newtheorem*{theorem*}{Theorem}
\newtheorem{claim}{Claim}
\theoremstyle{definition}
\newtheorem{definition}{Definition}
\newtheorem{example}{Example}
\newtheorem{notation}{Notation}
\algnewcommand\algorithmicinput{\textbf{Input:}}
\algnewcommand\INPUT{\item[\algorithmicinput]}
\algnewcommand\algorithmicoutput{\textbf{Output:}}
\algnewcommand\OUTPUT{\item[\algorithmicoutput]}
\algnewcommand\algorithmicproc{\textbf{Procedure:}}
\algnewcommand\PROCEDURE{\item[\algorithmicproc]}
\newlength{\continueindent}
\newcommand*{\ALG@customparshape}{\parshape 2 \leftmargin \linewidth \dimexpr\ALG@tlm+\continueindent\relax \dimexpr\linewidth+\leftmargin-\ALG@tlm-\continueindent\relax}
\apptocmd{\ALG@beginblock}{\ALG@customparshape}{}{\errmessage{failed to patch}}
\theoremstyle{remark}
\newtheorem{remark}{Remark}
\definecolor{DarkBlue}{rgb}{0,0.1,0.55}
\numberwithin{equation}{section}
\newcommand {\hide}[1]{}
 \newcommand {\sign} {\mbox{\bf sign}}
\newcommand {\junk}[1]{}
\newcommand {\R} {\mathrm{R}}
\newcommand {\D}     {\mbox{\rm D}}
\newcommand {\Sphere}{\mbox{${\bf S}$}}     
\newcommand {\Z}  {\mathbb{Z}}
\newcommand {\Q}         {\mathbb{Q}}
\newcommand{\Ww}    {\mathbf{W}}
\newcommand{\Ss}    {\mathbf{S}}
\newcommand{\F}{\mathbb{F}}
\newcommand {\ZZ} {{\rm Z}}
\newcommand {\RR} {{\mathcal R}}
\newcommand {\la}   {{\langle}}
\newcommand {\ra}   {{\rangle}}
\newcommand {\eps} {{\varepsilon}}
\newcommand{\card}{\mathrm{card}}
\newcommand{\rank}{\mathrm{rank}}
\def\addots{\mathinner{\mkern1mu
\raise1pt\vbox{\kern7pt\hbox{.}}
\mkern2mu\raise4pt\hbox{.}\mkern2mu
\raise7pt\hbox{.}\mkern1mu}}
\newcommand{\HH}  {\mbox{\rm H}}
\newcommand{\x}{\mathbf{x}}
\newcommand{\y}{\mathbf{y}}
\newcommand{\z}{\mathbf{z}}
\newcommand{\Par}{\mathrm{Par}}
\newcommand{\Ind}{\mathrm{Ind}}
\newcommand{\GL}{\mathrm{GL}}
\newcommand{\length}{\mathrm{length}}
\newcommand{\gdom}{{\;\underline{\triangleright}\;}}
\newcommand{\ind}{\mathrm{ind}}
\newcommand{\mult}{\mathrm{mult}}
\newcommand{\Comp}{\mathrm{Comp}}
\newcommand{\CompMax}{\mathrm{CompMax}}
\newcommand{\CompMin}{\mathrm{CompMin}}
\newcommand{\Weyl}{\mathcal{W}}
\newcommand{\w}{\mathbf{w}}
\newcommand{\Coxeter}{\mathrm{Cox}}
\newcommand{\Bl}{\mathrm{Bl}}
\begin{document}
\title[Cohomology of symmetric semi-algebraic sets]
{
Vandermonde varieties, mirrored spaces, and the cohomology of symmetric semi-algebraic sets
}

\author{Saugata Basu}
\address{Department of Mathematics,
Purdue University, West Lafayette, IN 47906, U.S.A.}
\email{sbasu@math.purdue.edu}

\author{Cordian Riener}
\address{Department of Mathematics and Statistics, 
UiT The Arctic University of Norway,  
9037 Troms\o{}, Norway}
\email{cordian.riener@uit.no}

\subjclass{Primary 14F25; Secondary 68W30}
\date{\textbf{\today}}
\keywords{symmetric semi-algebraic sets, isotypic decomposition, Specht module, Betti numbers, mirrored spaces, computational complexity}
\thanks{
Basu was  partially supported by NSF grants
CCF-1618918, DMS-1620271 and CCF-1910441.
Riener was supported by TFS grant 17\_matte\_CR. \\
Communicated by Peter Bürgisser.}

\begin{abstract}
Let $\R$ be a real closed field.
We prove that for each fixed $\ell, d \geq 0$, there exists
 an algorithm that takes as input a quantifier-free first order formula
 $\Phi$
 with atoms $P=0, P > 0, P < 0 \text{ with } P \in  \mathcal{P} \subset \D[X_1,\ldots,X_k]^{\mathfrak{S}_k}_{\leq d}$,
 where $\D$ is an ordered domain contained in $\R$,
 and computes the ranks of the first $\ell+1$ cohomology groups,
 of the symmetric semi-algebraic set defined by $\Phi$.
 The complexity of this algorithm (measured by the number of arithmetic operations in $\D$)
is bounded by a \emph{polynomial}  in $k$ and $\card(\mathcal{P})$
 (for fixed $d$ and $\ell$). This result contrasts with
 the $\mathbf{PSPACE}$-hardness of the problem of computing just  
 the zero-th Betti number (i.e. 
 the number of semi-algebraically connected components) 
in the general 
case for $d \geq 2$  (taking the ordered domain $\D$ to be equal to $\mathbb{Z}$).

The above algorithmic result is built on  new representation theoretic results on the cohomology 
of symmetric semi-algebraic sets. 
We prove that the Specht modules corresponding to partitions having long lengths cannot occur 
in the isotypic decompositions of 
low dimensional cohomology modules of closed semi-algebraic sets defined 
by symmetric polynomials having small degrees.  This result generalizes prior results obtained by the authors giving restrictions  on such partitions in terms of  their ranks,   
and is the key technical tool in the design of the algorithm mentioned in the previous paragraph. 
\end{abstract}
\maketitle

\tableofcontents

\section{Introduction and Main Results}
\label{sec:intro}
Throughout the paper we fix a real closed field, which we will denote by  $\R$ (there is no harm in assuming $\R = \mathbb{R}$). We assume familiarity
with the basic notions of semi-algebraic geometry \cite{BCR,BPRbook2} -- especially, definitions
of semi-algebraic sets, their homology and cohomology groups and  main properties. 

We will use the following notation.
\begin{notation}[Betti numbers]
  Let $S \subset\R^{k}$ be any  semi-algebraic set.
  We denote by $b_{i} (S) =  \dim_{\Q}  \HH^{i} (S,\Q)$
  (here and everywhere else in this paper without further mention we only consider cohomology with rational coefficients and we will denote $\HH^i(S)=  \HH^i(S,\Q)$).
 It is worth noting  that the precise  definition of  the cohomology groups 
 $\HH^i(S)$  requires some care if the semi-algebraic set $S$ is defined over an arbitrary (possibly non-archimedean) real closed field. For details we refer to \cite[Chapter 7,  Section 5]{BPRbook2}. 
 \end{notation}

\subsection{Background and main algorithmic result}
\label{subsec:background}
The algorithmic problem of computing Betti numbers 
of arbitrary semi-algebraic subsets of $\R^k$  is a central and 
extremely well-studied
problem in algorithmic semi-algebraic geometry.  It has many ramifications, ranging from applications in 
the theory of computational complexity where it plays the role of `generalized counting' in real models of computation (see \cite{Burgisser-Cucker-04, BZ09}),
to robot motion planning  where the problem of computing the 
zero-th Betti number  (that is the number of  connected components)  of the free space of a robot
which can be modeled as a semi-algebraic set, is a central problem \cite{SS,Canny87}).

It is well-known that
the Betti numbers of semi-algebraic subsets  of $\R^k$ satisfy a singly exponential (in $k$) upper bound (see for example
\cite[Theorem 7.38]{BPRbook2}).
The singly exponential dependence on $k$ of the bound is moreover unavoidable 
as shown by the following (key) example.

\begin{example}[Key example]
\label{eg:basic1}
Let 
\begin{equation}
\label{eqn:eg:basic1}
F_{k,d,\eps} = \sum_{i=1}^k \left( \prod_{j=0}^{d-1}(X_i - j)^2\right) - \eps.
\end{equation}
Then, for $0 < \eps \ll 1$, the set of real zeros, $V_{d,k,\eps}$ of $F_{k,d,\eps}$ in $\R^k$ consists of $d^k$ semi-algebraically connected components --
each of which is semi-algebraically homeomorphic to a small sphere.
Thus,
\[
b_0(V_{d,k,\eps})  = b_{k-1}(V_{d,k,\eps}) = d^k,
\]
and both grow exponentially  in $k$ (for fixed $d$). 
\end{example}

A  common belief 
in algorithmic semi-algebraic geometry is that topological invariants satisfying
a certain bound should in fact be computable by algorithms with complexity bounded by roughly the
same estimate. From this point of view one expects that there should exist algorithms for computing the 
Betti numbers of semi-algebraic sets with complexity bounded singly exponentially. 
Indeed, algorithms for computing the zero-th Betti number (i.e. the number of semi-algebraically connected components)
of semi-algebraic sets have been investigated in depth, and nearly optimal algorithms are known for this problem 
\cite{BPR99,BMF2014}.
An algorithm with singly exponential complexity is known for
computing the first Betti number of semi-algebraic sets is given in \cite{BPRbettione}, and then extended to the first
 $\ell$ (for any fixed $\ell$) Betti numbers in \cite{Bas05-first}.
The Euler-Poincar\'e characteristic, which is the alternating sum of the Betti numbers, is easier to compute,
and a singly exponential algorithm for computing it is known \cite{Basu1,BPR-euler-poincare}. \\

While many advances have been made in recent years \cite{Bas05-first,Bas05-topbetti,BPRbettione,BCL}
the best algorithm for computing \emph{all}  the Betti numbers of any given semi-algebraic set $S \subset \R^k$ 
still has doubly exponential (in $k$) complexity, even in the case where the degrees of the defining polynomials
are assumed to be bounded by a constant ($\geq 2$) \cite{SS}. 
The existence of algorithms with \emph{singly exponential complexity}  for computing all the Betti numbers of 
a given semi-algebraic set is considered to be a major open question in algorithmic semi-algebraic geometry
(see the survey \cite{Basu-survey}). \\

One important reason why the problem of designing an algorithm for computing the Betti numbers of semi-algebraic sets 
with singly exponential complexity  is open,  is that while the Betti numbers of 
semi-algebraic sets are bounded by a singly exponential function, the best known algorithm
for obtaining semi-algebraic triangulation has doubly exponential complexity \cite{SS}. \\

\begin{remark}[Other models]
\label{rem:BCL}
We remark here that  by the word `algorithm'  in the previous paragraphs we are referring only to algorithms that work correctly for all inputs and whose complexity is uniformly bounded i.e. bounded in terms of the degrees
and the number of input polynomials and independent of the actual coefficients of the polynomials. In contrast to this,  there has been very exciting recent work where the authors have given algorithms  with singly exponential complexity for computing all the 
Betti numbers of semi-algebraic sets \cite{BCL2019, BCT2020.1, BCT2020.2}. However, the complexities of these 
algorithms depend in addition to the degrees and the number of polynomials,  also on the `condition number' of the input. The condition number can be infinite if the given input is ill-conditioned. Thus, such algorithms will fail to produce any result on certain inputs. 
In this paper we will be concerned with exact algorithms that work for all possible inputs.
\end{remark}

From the point of view of lower bounds,  the problem of  
computing even  the number of connected components (i.e. the zero-th Betti number) of general (not necessarily symmetric)
semi-algebraic sets defined by polynomials of degrees bounded by any constant $d \geq 2$ is a $\textbf{PSPACE}$-hard
problem  \cite{Reif79},
and thus unlikely to have algorithms with polynomially bounded complexity. \\

In what follows, we will consider the algorithmic problem of computing the Betti numbers of semi-algebraic
sets in the presence of an additional important property -- namely symmetry.

\subsubsection{Brief History}
\label{subsubsec:brief-history}
The study of efficient algorithms for computing topological invariants of symmetric semi-algebraic sets 
has a shorter history than of such algorithms for arbitrary semi-algebraic set. 
Using the so called `degree principle' proved by Timofte \cite{Timofte03} and Riener \cite{Riener}, one can design an algorithm for deciding emptiness of symmetric algebraic sets in $\R^k$ defined by symmetric polynomials of degree $d$, having complexity $k^{O(d)}$ (i.e.  polynomial in $s,k$ for fixed $d$).
The algorithmic questions of computing the \emph{equivariant Betti numbers} 
(i.e.  the dimensions of $\HH_{\mathfrak{S}_k}^*(S)$ -- see the end of the paragraph for definition),
and also the Euler-Poincar\'e characteristics of symmetric semi-algebraic sets $S \subset \R^k$
were considered by the authors of the current paper. 
In \cite{BC-selecta}, an algorithm with polynomially bounded complexity (polynomial in $k$  and the number of polynomials used in the definition of $S$, for fixed $d$) 
was described for computing all the equivariant Betti numbers of a closed symmetric semi-algebraic set $S \subset \R^k$ defined by a formula involving at most $s$ symmetric polynomials of degree bounded by $d$. Since we consider cohomology with rational coefficients and because $\mathfrak{S}_k$ is a finite group, there is an isomorphism $\HH^*(S/\mathfrak{S}_k) \cong \HH_{\mathfrak{S}_k}^*(S)$, and hence this amounts to computing the Betti numbers of the quotient.
In \cite{BC-conm}, an algorithm with polynomially bounded complexity (better than that of the algorithm mentioned above) was given for computing the equivariant as well as the ordinary Euler-Poincar\'e characteristics
of symmetric semi-algebraic sets.

Before continuing further we introduce some useful notation. \\
\subsubsection{Notation}
\label{subsec:basic-notation-definition}
\begin{notation}[Zeros]
\label{not:zeros}
  For $P \in \R [X_{1} , \ldots ,X_{k}]$,  we denote by $\ZZ(P, \R^{k})$  the set of zeros of $P$ in
  $\R^{k}$. More generally, for any finite set
  $\mathcal{P} \subset \R [ X_{1} , \ldots ,X_{k} ]$, we denote by $\ZZ
  (\mathcal{P}, \R^{k})$ the set of common zeros of $\mathcal{P}$ in
  $\R^{k}$.  
\end{notation}

\begin{notation}[Realizations, $\mathcal{P}$- and $\mathcal{P}$-closed semi-algebraic sets]
  \label{not:sign-condition} 
  For any finite family of polynomials $\mathcal{P}
  \subset \R [ X_{1} , \ldots ,X_{k} ]$, we call an element $\sigma \in \{
  0,1,-1 \}^{\mathcal{P}}$, a \emph{sign condition} on $\mathcal{P}$. For
  any semi-algebraic set $Z \subset \R^{k}$, and a sign condition $\sigma \in
  \{ 0,1,-1 \}^{\mathcal{P}}$, we denote by $\RR (\sigma ,Z)$ the
  semi-algebraic set defined by $$\{ \mathbf{x} \in Z \mid \sign (P (
  \mathbf{x})) = \sigma (P)  ,P \in \mathcal{P} \},$$ and call it the
  \emph{realization} of $\sigma$ on $Z$. 
 
  More generally, we call any
  Boolean formula $\Phi$ with atoms, $P =0, P < 0, P>0, \text{ for } P \in \mathcal{P}$, to
  be a \emph{$\mathcal{P}$-formula}. We call the realization of $\Phi$,
  namely the semi-algebraic set
  \begin{eqnarray*}
    \RR \left(\Phi \right) & := & \left\{ \mathbf{x} \in \R^{k} \mid
    \Phi (\mathbf{x}) \right\}
  \end{eqnarray*}
  a \emph{$\mathcal{P}$-semi-algebraic set}. 
  
  Finally, we call a Boolean
  formula without negations, and with atoms $P \geq 0, P\leq 0$,  where $P\in \mathcal{P}$, to be a 
  \emph{$\mathcal{P}$-closed formula}, and we call
  the realization, $\RR \left(\Phi \right)$, a \emph{$\mathcal{P}$-closed
  semi-algebraic set}.
  
\end{notation}

 \begin{notation}[Symmetric polynomials of bounded degrees]
\label{not:ring-of-symmetric}
For all $d,k \geq 0$,
we will denote by $\R[X_1,\ldots,X_k]_{\leq d}^{\mathfrak{S}_k}$ the subspace of the polynomial 
ring $\R[X_1,\ldots,X_k]$ consisting of symmetric polynomials of degree at most $d$.
\end{notation}

\begin{definition}[Symmetric semi-algebraic sets]
\label{def:symmetric-sa}
We say that a semi-algebraic $S \subset \R^k$ is \emph{symmetric}
if it is stable under the standard action of the symmetric group $\mathfrak{S}_k$ permuting coordinates.
\end{definition}

Since we will discuss complexities of various algorithms we also make precise the notion
of complexity that we are going to use.

\begin{definition}[Definition of complexity]
\label{def:complexity}
In our algorithms we will usually take as input polynomials with coefficients belonging to an ordered domain (say $\D$).
By complexity of an algorithm we will mean the number of arithmetic operations and comparisons in the domain $\D$.
Since $\Z$ is always a subring of $\D$, this will include operations involving integers. If $\D = \mathbb{R}$, then
the complexity of our algorithm will agree with the  Blum-Shub-Smale notion of real number complexity \cite{BSS}.
In case, $\D = \Z$, then we are able to deduce the bit-complexity of our algorithms in terms of the bit-sizes of the coefficients
of the input polynomials, and this will agree with the classical (Turing) notion of complexity.
\end{definition} 

We are now in a position to state our main algorithmic result.
\subsubsection{Main Algorithmic result}
\label{subsubsec:algorithmic}
\begin{theorem}
\label{thm:alg}
Let $\D$ be an ordered domain contained in a real closed field $\R$, and let $\ell, d \geq 0$.
There exists an algorithm with takes as input a finite set 
$\mathcal{P} \subset \D[X_1,\ldots, X_k]^{\mathfrak{S}_k}_{\leq d}$, and a $\mathcal{P}$-formula $\Phi$,
and computes
the tuple of integers
\[
(b_0(\RR(\Phi)), \ldots, b_\ell(\RR(\Phi))).
\] 

The complexity of the algorithm, measured by the number of arithmetic operations in 
$\D$,  is bounded by
$(s k d)^{2^{O(d+\ell)}}$.

If $\D = \Z$, and the bit-sizes of the coefficients of the input is bounded by $\tau$, then the bit-complexity
of the algorithm is bounded by 
\[
(\tau s k d)^{2^{O(d+\ell)}}.
\]
\end{theorem}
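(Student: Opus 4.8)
The plan is to reduce the computation of the isotypic decomposition of $\HH^i(S)$, for $i \le \ell$, to a bounded-degree semi-algebraic computation over a low-dimensional ambient space, using Theorem~\ref{thm:main1}(a) as the crucial dimension-reduction device. The starting point is the standard equivariant strategy: the symmetric semi-algebraic set $S \subset \R^k$ is stratified by its (relatively open) $\mathfrak{S}_k$-orbit types, indexed by set partitions of $\{1,\ldots,k\}$, and on each stratum the coordinates are organized into blocks of equal values. Because the defining polynomials in $\mathcal{P}$ are symmetric of degree $\le d$, each of them can be rewritten as a polynomial of degree $\le d$ in the first $d$ power sums $p_1^{(k)},\ldots,p_d^{(k)}$ (equivalently in $e_1,\ldots,e_d$); this is what makes a Vandermonde-type reduction possible. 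The combinatorial model here is the one used for mirrored spaces (Davis~\cite{Davis}, Solomon~\cite{Solomon1968}) together with the Vandermonde results of Arnold~\cite{Arnold}, Giventhal~\cite{Giventhal} and Kostov~\cite{Kostov}, exactly as in the proofs of Theorems~\ref{thm:vandermonde} and~\ref{thm:main1}.

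**Next I would** invoke the length bound. By Theorem~\ref{thm:main1}(a), for each $i \le \ell$ only Specht modules $\mathbb{S}^\lambda$ with $\length(\lambda) < i + 2d - 1 \le \ell + 2d - 1 =: L$ can occur in $\HH^i(S)$. For such $\lambda$, the multiplicity $\mult_{\mathbb{S}^\lambda}(\HH^i(S))$ is, by Schur–Weyl-type reciprocity (or by taking invariants after tensoring with the corresponding Young symmetrizer), computable from the ordinary cohomology of a $\le L$-fold ``mirrored'' / partial quotient of $S$ — concretely, from the cohomology of the image of $S$ under the map $\R^k \to \R^L$ that only remembers $L-1$ of the coordinate blocks plus one ``large'' block on which the coordinates coincide, intersected with the appropriate Vandermonde-type constraints. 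The point is that determining all multiplicities $m_{i,\lambda}$ with $\length(\lambda) \le L$ requires only the Betti numbers (up to degree $\ell$) of polynomially many semi-algebraic sets, each of which lives in an ambient space of dimension bounded by a function of $L$ (hence of $d$ and $\ell$ only), and is defined by polynomially many polynomials of degree bounded by a function of $d$. One then feeds these bounded-dimension, bounded-degree instances to the singly-exponential algorithm of Basu~\cite{Bas05-first} for computing the first $\ell+1$ Betti numbers; since the ambient dimension and degrees are now $O_{d,\ell}(1)$, ``singly exponential in the ambient dimension'' becomes a \emph{constant} depending only on $d$ and $\ell$, and the overall cost is polynomial in $k$ and $s = \card(\mathcal{P})$, giving the bound $(skd)^{2^{O(d+\ell)}}$. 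The Betti numbers $b_0(\RR(\Phi)),\ldots,b_\ell(\RR(\Phi))$ are then recovered as $b_i = \sum_{\lambda} m_{i,\lambda} \dim \mathbb{S}^\lambda$, where each $\dim \mathbb{S}^\lambda$ is a hook-length formula evaluation, computable within the same budget.

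**The main obstacle** I anticipate is not the complexity bookkeeping but establishing that the multiplicities $m_{i,\lambda}$, for $\length(\lambda) \le L$, are genuinely computed by a \emph{fixed} bounded-dimensional semi-algebraic problem — i.e. that the ``mirrored space'' / partial-quotient construction carries a spectral sequence (the equivariant Leray spectral sequence used for Theorem~\ref{thm:main1}) whose relevant page and differentials are themselves algorithmically accessible in polynomial time. Concretely, one must track not just which $\mathbb{S}^\lambda$ appear but their exact multiplicities through the spectral sequence, and show that all the linear-algebra data (the $E_2$ terms, the maps) can be assembled from Betti-number computations on the bounded-dimensional pieces; this is where the ``careful topological analysis of regular cell complexes'' alluded to in the introduction does the real work. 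A secondary technical point is the passage to an infinitesimal deformation (à la Example~\ref{eg:basic}) to ensure the strata and their closures are in general position, and verifying that this deformation does not change the isotypic decomposition while keeping the number of polynomials polynomial in the input size — this is handled by the standard $\eps$-perturbation techniques of algorithmic real algebraic geometry and contributes only the extra factor recorded in the bit-complexity bound $(\tau s k d)^{2^{O(d+\ell)}}$.
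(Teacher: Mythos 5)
Your high-level architecture --- restrict attention to partitions of length $< \ell + 2d-1$ via Theorem~\ref{thm:main1}, reduce everything to polynomially many semi-algebraic problems in ambient dimension $O(d+\ell)$, solve those with standard algorithms, and recover $b_i$ from the multiplicities via the hook-length formula --- agrees with the paper's. But the step you yourself flag as the ``main obstacle'' is a genuine gap, and the route you sketch for it would not work. You propose to extract the exact multiplicities $m_{i,\lambda}$ from the equivariant Leray spectral sequence; this would require computing differentials and resolving extension problems, and none of that data is determined by Betti numbers of bounded-dimensional pieces --- $\HH^i(S)$ is only a subquotient of $\bigoplus_{p+q=i}E_2^{p,q}$. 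In the paper the Leray spectral sequence is used only to prove the vanishing statement (Theorem~\ref{thm:main1}); it plays no role in the algorithm. The algorithm instead rests on the Davis--Solomon mirrored-space decomposition (Theorem~\ref{thm:Davis}), which is an \emph{exact} direct-sum formula $\HH^*(S)\cong_{\mathfrak{S}_k}\bigoplus_{T\subset\Coxeter(k)}\HH^*(S_k,S_k^T)\otimes\Psi^{(k)}_T$, so that $m_{i,\lambda}=\sum_T b_i(S_k,S_k^T)\cdot\mult_{\mathbb{S}^\lambda}(\Psi^{(k)}_T)$ with no differentials to control. The factors $\mult_{\mathbb{S}^\lambda}(\Psi^{(k)}_T)$ are computed by the recursion coming from Solomon's identity (Proposition~\ref{prop:Solomon}, Algorithm~\ref{alg:mult}), and the same proposition shows that only $T$ with $\card(T)<i+2d-1$ contribute, which is what keeps the number of terms polynomial in $k$.

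The second missing ingredient is the concrete dimension reduction for the relative terms. Your ``partial quotient remembering $L-1$ coordinate blocks'' is left unspecified and unjustified; the paper replaces the pair $(S_k,S_k^T)$ by the pair $(\widetilde{S^{(T)}_k},\widetilde{S_k^T})\subset\R^d\times\R^T$, assembled from the images $\Psi^{(k)}_d(S_{k,T'})$ crossed with faces of a simplex, and proves $\HH^*(\widetilde{S^{(T)}_k},\widetilde{S_k^T})\cong\HH^*(S_k,S_k^T)$ (Proposition~\ref{prop:alg}) via blow-up complexes and the contractibility of weighted Vandermonde slices of Weyl chambers (Proposition~\ref{prop:agk}). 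Semi-algebraic descriptions of these images are obtained in polynomial time using the homeomorphism statement of Proposition~\ref{prop:arnold} and Corollary~\ref{cor:arnold} together with the algorithm of \cite{BC-selecta} (Algorithm~\ref{alg:tilde}); only then are triangulation-based Betti number computations applied, in dimension $O(d+\ell)$. Finally, the reduction to a closed and bounded set is done by the Gabrielov--Vorobjov construction with $O(\ell)$ infinitesimals, shown to be $\mathfrak{S}_k$-equivariant (Remark~\ref{rem:GV}), rather than by a general-position perturbation of strata. Without these ingredients your proposal does not yet constitute a proof of the stated complexity bound, nor of the correctness of the isotypic output.
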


\begin{remark}[Polynomiality]
\label{rem:complexity}
Note that the complexity of the algorithm in Theorem~\ref{thm:alg}  
is bounded by a polynomial in $s$ and $k$ for every fixed $\ell,d$.
\end{remark}

Note that as mentioned previously,
the analogous algorithmic problem of computing Betti numbers of general
(not necessarily symmetric)  semi-algebraic sets defined by polynomials of degree bounded by any fixed constant $d$ is a
$\mathbf{PSPACE}$-hard problem for $d \geq 2$ (with the coefficients of the input polynomials belonging to $\mathbb{Z}$), 
and thus unlikely to admit algorithms with polynomially bounded complexity. \\

\subsubsection{New ideas}
Several new ideas (compared to previous algorithms for computing Betti numbers of semi-algebraic sets) 
appear in the design of the algorithm cited in Theorem~\ref{thm:alg}. \\

We begin by replacing the given set by a closed and bounded one defined by symmetric polynomials 
satisfying the same degree bound as the input polynomials, and whose cohomology
groups are isomorphic to those of the given set up to dimension $\ell$.  
The key new idea is to utilize the $\mathfrak{S}_k$-module
structure of the cohomology groups of this new closed and bounded semi-algebraic set. This reduces the problem of 
computing the dimensions of the cohomology groups of the original set, 
to that of computing the multiplicities of the various
Specht modules appearing in the cohomology groups (up to dimension $\ell$) of the new set.
The sought after Betti numbers can then be recovered from these multiplicities. \\

In order to compute the multiplicities of the various Specht modules, we leverage certain techniques originating in the study of cohomology groups of  mirrored spaces \cite{Davis-book}. These  techniques form the basis of the proofs of
our representation theoretic results (Theorems~\ref{thm:main1} and \ref{thm:vandermonde}).
On the algorithmic front they help us in two ways. 
Firstly, (in small dimensions) it
guarantees that only a polynomially bounded many of the multiplicities to be computed can be non-zero, and this 
restricts the set of partitions that enters into the computation.
Secondly,
it allows us to obtain a \emph{dimension reduction}, reducing the problem of computing the multiplicities 
for any given closed and bounded  semi-algebraic set  defined in terms of symmetric polynomials of 
degrees bounded by $d$, to the problem of computing the Betti numbers of pairs of semi-algebraic subsets,
which are not symmetric any more but contained in a much smaller ($O(d+\ell)$) dimensional space. For the latter
problem it suffices to use the standard algorithms mentioned previously. 
We refer the reader to Section \ref{subsec:alg:outline} for a more detailed outline.

\subsection{Representation-theoretic results}
A key step in the proof of Theorem~\ref{thm:alg} as outlined above is the computation of the multiplicities
of the Specht modules in the cohomology modules of the given semi-algebraic sets. For this we need to consider
the isotypic decomposition of cohomology modules.
In the next three subsections (namely, Sections~\ref{subsubsec:isotypic}, \ref{subsubsec:orbitspace} and \ref{subsubsec:general}) we provide some background and survey prior results, and state the new results in Section~\ref{subsubsec:rep-theory-results}.
Finally, in Section~\ref{subsubsec:Vandermonde} we state a representation-theoretic result about the cohomology of a class of very well-studied symmetric varieties (namely Vandermonde varieties) which plays a key role in the proofs of the main theorems of this paper. This result could also be of independent interest.

\subsubsection{Isotypic decomposition of cohomology modules}
\label{subsubsec:isotypic}
Despite the worst case exponential behavior of the  Betti numbers of symmetric varieties, there is one handle we have
on them that makes their behavior tame, at least when the degrees of the defining polynomials are held fixed.
The action of the symmetric group $\mathfrak{S}_k$ on symmetric semi-algebraic sets
$S \subset \R^k$ induces an action on the cohomology spaces $\HH^*(S)$, giving $\HH^*(S)$ the structure of 
\emph{a finite dimensional $\mathfrak{S}_k$-module} (see Definition~\ref{def:G-module} in Section~\ref{subsec:primer}). 
General facts from group representation theory (see Appendix~\ref{sec:Appendix}) 
then tell us that
the $\mathfrak{S}_k$-module $\HH^*(S)$ admits a canonically defined  \emph{isotypic decomposition} into a direct sum
of sub-$\mathfrak{S}_k$-submodules, each of which is a multiple of certain irreducible $\mathfrak{S}_k$-modules (see Theorem~\ref{thm:isotypic}, Section~\ref{subsec:primer}).
The irreducible $\mathfrak{S}_k$-modules are well studied, and they are in bijection with the finite set of partitions
of the number $k$ -- the module corresponding to the partition $\lambda \vdash k$ will be denoted by $\mathbb{S}^\lambda$ in what follows, and is called the \emph{Specht} module corresponding to $\lambda$
(see  Definition~\ref{def:Specht} in Section~\ref{subsec:Specht} for the precise definition of these modules).

Thus the isotypic decomposition of $\HH^*(S)$ gives a direct sum decomposition
\[
\HH^i(S) \cong_{\mathfrak{S}_k} \bigoplus_{\lambda \vdash k} m_{i,\lambda}(S)  \mathbb{S}^\lambda,
\]
the non-negative integer $m_{i,\lambda}(S)$ is called the \emph{multiplicity} of $\mathbb{S}^\lambda$ in $\HH^i(S)$.

The dimension
of the Specht module $\mathbb{S}^\lambda$, has a simple expression
\[
\dim_\mathbb{Q} \mathbb{S}^\lambda =  \frac{k!}{\mbox{product of the hook lengths of the boxes in the Young diagram of 
$\lambda$}}
\] 
which is sometimes called the \emph{hook length formula}. These dimensions could be exponentially big even for relatively simple partitions (say the partition $(k/2,k/2)$ for even $k$).  
Thus,  knowing the multiplicities $m_{i,\lambda}(S), \lambda \vdash k$,  allows one to compute the dimension of $\HH^i(S)$,
and thus the Betti numbers of $S$. However, note that the number of partitions of $k$ is exponentially large (due to a result of  Erd\H{o}s  and Lehner \cite{Erdos-Lehner}). Thus, this method is at best of exponential complexity, 
unless we can restrict a priori  the 
number of partitions to consider (i.e. those that are allowed to appear in the isotypic decomposition of the 
cohomology modules of symmetric semi-algebraic sets that we are considering). \\

In order to compute the multiplicities efficiently, we prove new  quantitative results on the 
representations of the symmetric group that can appear as cohomology modules of the symmetric
semi-algebraic sets under consideration. These results might be of independent interest.
In order to relate the new results with prior work and to put them in context,
we first survey some known results in the next two sections.

\subsubsection{Partitions of length one and cohomology of the orbit space}
\label{subsubsec:orbitspace}
The partition $(k) \vdash k$ having length one plays a special role. The corresponding Specht-module
$\mathbb{S}^{(k)}$ is the one dimensional \emph{trivial representation}  of $\mathfrak{S}_k$ (we also denote it by $1_{\mathfrak{S}_k}$), and the isotypic
component of $\HH^i(S)$ corresponding to  the partition $(k)$ is thus isomorphic to the fixed part
$\HH^i(S)^{\mathfrak{S}_k}$ of $\HH^i(S)$, which in turn is isomorphic to 
$\HH^i(S/\mathfrak{S}_k)$ (see \cite{BC-imrn} for details and subtleties regarding these isomorphisms).
We obtain that the multiplicity of $\mathbb{S}^{(k)}$ in the cohomology module $\HH^i(S)$ gives the
$i$-th Betti number, $b_i(S/\mathfrak{S}_k)$. Thus, the problem of computing the 
dimension of the
cohomology of the quotient
$S/\mathfrak{S}_k$ 
(or equivalently the space of orbits)
is a special case of computing a multiplicity of a particular Specht-module in 
$\HH^*(S)$. 
We examine this case closely in the next subsection. \\

It is clear that even in the presence of symmetry the Betti numbers of semi-algebraic sets can be exponentially 
large (cf. Example~\ref{eg:basic1}). 
However, if in Example~\ref{eg:basic1} we set $\eps = 0$, and 
consider the \emph{orbits}  of the action of the symmetric group $\mathfrak{S}_k$ on  the real algebraic set 
$V_{d,k} = V_{d,k,0}$ defined in Example~\ref{eg:basic1}, then
the number of orbits of this action equals the zero-th Betti number of the quotient $V_{d,k}/\mathfrak{S}_k$. 
(Note that for any  symmetric semi-algebraic set $S\subset\R^k$ the corresponding  orbit space  $S/\mathfrak{S}_k$ can be constructed as the image of a polynomial map and thus  is again semi-algebraic  \cite{brocker1998symmetric,Procesi-Schwarz}). \\

It is not too difficult to see that the orbit of a point $\x=(x_1,\ldots,x_k) \in V_{d,k}$ is determined by the tuple $\lambda(\x) = (\lambda_1,\ldots,\lambda_{d})$, where
$\lambda_i = \card(\{j \mid x_j = i\})$. 

Thus, the number of orbits of $V_{d,k}$, and thus the sum of the Betti numbers of the quotient $V_{d,k}/\mathfrak{S}_k$ equals
$\binom{k + d -1}{d-1}$, which satisfies the inequalities
\begin{equation*}
c_d \cdot k^{d-1} \leq \binom{k +d -1}{d-1}  \leq C_d \cdot k^{d-1},
\end{equation*}
where $c_d, C_d$ are constants that depend only on $d$. 

Note that 
\[
V_{d,k} = \ZZ(F_{d,k,0},\R^k),
\]
and
$F_{d,k,0} \in \R[X_1,\ldots,X_k]^{\mathfrak{S}_k}_{\leq 2d}$ (cf. Eqn. \eqref{eqn:eg:basic1}).
Moreover, notice that unlike the Betti numbers of $V_{d,k}$ itself,
the Betti numbers of the quotient, $V_{d,k}/\mathfrak{S}_k$, are bounded by a \emph{polynomial} in $k$ 
(for fixed $d$), and moreover the
degree of this polynomial is $d-1$. \\

In fact, the following general theorem is proved in ~\cite[Theorem 6]{BC-selecta} of which the phenomenon exhibited above is a particular case.

\begin{theorem}\cite{BC-selecta}
 \label{thm:bound}
 Let $S \subset \R^k$ be a $\mathcal{P}$-closed semi-algebraic set, where 
 \[
 \mathcal{P} \subset \R[X_1,\ldots,X_k]^{\mathfrak{S}_k}_{\leq d}, 
 \]
 $\card(\mathcal{P}) = s$ and 
 $d> 1$.
  Then,
 \begin{eqnarray}
 b(S/\mathfrak{S}_k) &=&   d^{O(d)} s^d k^{\lfloor d/2 \rfloor -1} \mbox{ if }  1 < d  \ll s,k.
 \end{eqnarray}
 \end{theorem}

The following theorem which also appears in \cite[Theorem 10]{BC-selecta} indicates  that the orbit-space case is markedly different from the general (non-symmetric) case from the point of view of algorithmic complexity as well.

\begin{theorem}\cite{BC-selecta}
\label{thm:algorithm}
For every fixed $d \geq 0$, there exists an algorithm that takes as input a 
$\mathcal{P}$-closed formula $\Phi$, where $\mathcal{P} \subset \R[X_1,\ldots,X_k]^{\mathfrak{S}_k}_{\leq d}$, and outputs 
$b_i(S/\mathfrak{S}_k), 0 \leq i < d$, where $S = \RR(\Phi,\R^k)$. The complexity of this algorithm  is bounded by 
$(\card(\mathcal{P}) k d)^{2^{O(d)}}$.
 \end{theorem}

Notice that for fixed $d$ the complexity of the algorithm in Theorem \ref{thm:algorithm} is polynomial in $\card(\mathcal{P})$ and $k$. Taken together, Theorems~\ref{thm:bound} and \ref{thm:algorithm} show a 
\emph{dramatic reduction of complexity} -- both topological and algorithmic -- 
when passing from a symmetric variety to its orbit space. \\

\subsubsection{General partitions}
\label{subsubsec:general}
We now return to the study of the cohomology of a symmetric semi-algebraic set $S$ itself -- rather than its quotient.
Before proceeding further it is useful to go back to our key example (Example~\ref{eg:basic1}).

\begin{example}[Key example continued with $d=2$]
\label{eg:basic2}
We set the degree $d=2$ and $\eps =0$ in the  polynomial $F_{k,d,\eps}$ in Example~\ref{eg:basic1}, and denote
$
F_k = F_{k,2,0} = \sum_{i=1}^k X_i^2(X_i -1)^2,
$
and 
$V_k = V_{k,2,0} = \ZZ(F_k,\R^k)$.

We now describe the isotypic decomposition of $\HH^*(V_k)$. The details of this computation
appear in \cite{BC-imrn} and are omitted here.
In dimension $0$ we get:
\begin{eqnarray}
\label{eqn:eg:basic:1'}
\HH^0(V_k)  &\cong_{\mathfrak{S}_k} &  \bigoplus_{\substack{\mu \vdash k\\ \ell(\mu) \leq 2}} m_\mu \mathbb{S}^\mu,
\end{eqnarray}
where
\begin{eqnarray}
\label{eqn:eg:basic:2'}
m_\mu &=&2\mu_1 -k +1 \label{eqn:even-and-odd} \\
		&=& \mu_1 - \mu_2 +1 \\
            &\leq & k+1. \nonumber
\end{eqnarray}

Notice that for $\mu = (\mu_1,\mu_2) \vdash k$, by the hook-length formula
we have,
\begin{eqnarray}
\label{eqn:hook-length}
\dim \; \mathbb{S}^\mu &=& \frac{k! \; (\mu_1 - \mu_2+1)}{(\mu_1+1)!\mu_2!}.
\end{eqnarray}
Note that  since $\dim \HH^0(V_k,\F) =  2^k$, we obtain as a consequence  (from \eqref{eqn:even-and-odd} and 
 \eqref{eqn:hook-length}) the slightly non-obvious  identity
 \begin{eqnarray}
 \label{eqn:eg:basic:3'}
 2^k &=&
 \sum_{\substack{
 \mu_1 \geq \mu_2\geq 0\\ \mu_1+\mu_2 =k} }(\mu_1 - \mu_2 +1)\cdot \left( \frac{k!(\mu_1 - \mu_2 +1)}{(\mu_1+1)!\mu_2!}\right).
\end{eqnarray}
Notice that Eqns. \eqref{eqn:eg:basic:1'}, \eqref{eqn:eg:basic:2'}, and \eqref{eqn:eg:basic:3'} illustrate
the phenomenon of how an exponentially large dimensional cohomology group is built out of a relatively small (i.e. polynomially bounded) number of pieces -- each of which is a multiple (with polynomially bounded multiplicity) of certain Specht modules. 
\end{example}

The decomposition of the cohomology modules of a closed semi-algebraic set $S \subset \R^k$
defined by symmetric polynomials having degrees at most $d$  into isotypic components was studied in \cite{BC-imrn}, where several results were proved.
The first important result was a severe restriction on the partitions that are allowed to appear in the isotypic 
decomposition of the cohomology -- which cuts down the possibilities for the allowed partitions  \emph{from exponential
to polynomial} (for fixed $d$).
More precisely, it is shown in \cite{BC-imrn}
that with the same hypothesis as Theorem \ref{thm:bound}, 
\begin{equation}
\label{eqn:BC-imrn}
m_{i,\lambda}(S) \neq 0 \Rightarrow \rank(\lambda) < 2d,
\end{equation}
where 
$\rank(\lambda)$ is the size of the largest square (also referred to as the  `Durfee square' of the partition) that can fit inside the Young diagram (cf. Definition~\ref{def:young-diagram} in Section~\ref{subsec:Specht}) of the partition $\lambda$. 
For 
every fixed $d$, the number of partitions $\lambda$ of $k$ satisfying the condition  $\rank(\lambda) < 2d$ is \emph{polynomially bounded}  in $k$ (unlike the total number of partitions which grows exponentially). \\

The second key result obtained in \cite{BC-imrn}  is a \emph{polynomial bound}  (again for fixed $d$)
 on the multiplicities $m_{i,\lambda}(S)$ occurring in the isotypic decomposition of $\HH^i(S)$.  
Taken together -- the polynomiality of the number of allowed partitions, and the polynomiality of their multiplicities -- gives rise to the hope (via the `common belief'  alluded to before),
that the Betti numbers of symmetric semi-algebraic sets defined by symmetric polynomials of degrees bounded by a constant, could be computed with polynomially bounded complexity. \\

\subsubsection{New representation-theoretic results}
\label{subsubsec:rep-theory-results}
We now describe the new representation theoretic results that makes it possible to partially realize the `hope'  expressed above.
We obtain restrictions on the Specht modules, $\mathbb{S}^\lambda, \lambda \vdash k$, that are allowed to appear depending on $d$ and $k$, as well as the dimension (or the degree)  of the cohomology group under consideration.
These restrictions are of two kinds. 
Firstly, we prove that when $d$ is fixed, the Specht modules corresponding to partitions having long lengths cannot occur 
in the isotypic decompositions of 
small dimensional cohomology modules of 
semi-algebraic sets defined by symmetric polynomials of degrees bounded by $d$.
Secondly, we prove that the Specht modules corresponding to partitions having short lengths cannot occur 
in the isotypic decompositions of the high dimensional cohomology modules of 
semi-algebraic sets defined by symmetric polynomials of degrees bounded by $d$.

\begin{notation}
\label{not:par}
Recall that for any symmetric semi-algebraic subset $S \subset \R^k$ and $i \geq 0$,
we denote by $m_{i,\lambda}(S)$ the multiplicity of $\mathbb{S}^\lambda$ in the isotypic decomposition of $\HH^i(S)$, i.e., 
$m_{i,\lambda}(S)=\mult_{\mathbb{S}^\lambda}(\HH^i(S))$.
We will denote
\[
\Par_i(S) = \{\lambda \vdash k \mid  m_{i,\lambda}(S) \neq 0 \}.
\]
\end{notation}

We prove the following theorem.
The notation used in the theorems in this section is mostly standard; 
but readers unfamiliar with them should consult 
Appendix~\ref{sec:Appendix}.

\begin{theorem}
\label{thm:main1}
Let $d,k \in \Z_{>0}$
$d \geq 2$,
and
$S \subset \R^k$ be a $\mathcal{P}$-closed semi-algebraic set with 
$\mathcal{P} \subset \R[X_1,\ldots,X_k]^{\mathfrak{S}_k}_{\leq d}$.
Then, for  all $\lambda \vdash k$:
\begin{enumerate}[(a)]
\item
\label{itemlabel:thm:main1:a}
\begin{equation*}
m_{i,\lambda}(S)=0
\mbox{ for } i \leq \length(\lambda)- 2d + 1,
\end{equation*}
or equivalently,
\begin{equation*}
\max_{\lambda \in \Par_i(S)} \length(\lambda) <   i+2d-1;
\end{equation*}
\item
\label{itemlabel:thm:main1:b}
\begin{equation*}
m_{i,\lambda}(S)=0
\mbox{ for } i \geq k - \length(^{t}\lambda) +d+1,
\end{equation*}
or equivalently,
\begin{equation*}
\max_{\lambda \in \Par_i(S)}  \length(^{t}\lambda) <  k - i +d+1.
\end{equation*}
\end{enumerate}
\end{theorem}

Part~\eqref{itemlabel:thm:main1:a} of Theorem \ref{thm:main1} can be read as saying that for any fixed $i \geq 0$, 
and $S \subset \R^k$ a $\mathcal{P}$-semi-algebraic set with 
$\mathcal{P} \subset \R[X_1,\ldots,X_k]^{\mathfrak{S}_k}_{\leq d}$,

\[
\max_{\lambda \in \Par_i(S)} \length(\lambda) < i + 2d -1 = O(d).
\]

Similarly,
Part~\eqref{itemlabel:thm:main1:b} of Theorem \ref{thm:main1} can be read as saying that
\[
\max_{\lambda \in \Par_{k-i}(S))} \length(^{t}\lambda) < i + d + 1 = O(d).
\]

The following analysis of the cohomology modules of the key example (Example~\ref{eg:basic1})
shows that up to a multiplicative constant the bounds stated in Theorem~\ref{thm:main1}  on $\length(\lambda)$ and 
$\length(^{t}\lambda)$ for $\lambda \in \Par_i(S)$  are tight.

\begin{example}[Key example continued]
\label{eg:basic3}
For $d,k \in \Z_{>0}$,  and $0 < \eps \ll 1$,
consider the real algebraic set $V_{d,k,\eps}$ defined in Example~\ref{eg:basic1}.
Recall that for $0<\eps \ll 1$, $V_{d,k,\eps}$ consists of 
$d^k$ disjoint topological spheres, each sphere infinitesimally close (as a function of $\eps$) 
to one of the $d^k$ points
$\{0,\ldots,d-1\}^k \subset \R^k$.

Thus, 
 for $0 < \eps \ll 1$,
$\dim_\Q(\HH^0(V_{d,k,\eps})) = \dim_\Q(\HH^{k-1}(V_{d,k,\eps})) = d^k$, and 
and $\HH^i(V_{d,k,\eps}) = 0, i \neq 0,k-1$.
We now describe the isotypic decomposition of $\HH^i(V_{k,d,\eps})$ for $0 <\eps \ll 1$, and $i=0, k-1$. \\

In what follows, for $\lambda = (\lambda_1,\ldots,\lambda_m) \in \Z_{> 0}^m,
\sum_{i=1}^m \lambda_i = k$, we  denote by $\widetilde{\lambda}$ the partition of $k$ obtained by
permuting the $\lambda_i$'s so that they are in non-increasing order. \\

It is shown in \cite{BC-imrn} that
\begin{equation}
\label{eqn:eg:basic:5}
\HH^0(V_{k,d,\eps})  \cong_{\mathfrak{S}_k} \bigoplus_{\substack{
\lambda = (\lambda_1,\ldots,\lambda_d) \in \Z_{\geq 0}^d\\
\sum_{i=1}^d \lambda_i = k}} \left(\mathbb{S}^{\widetilde{\lambda}} \oplus \bigoplus_{\mu \gdom \widetilde{\lambda}, \mu \neq \widetilde{\lambda}} K(\mu,\widetilde{\lambda})\ \mathbb{S}^{\mu}\right),
\end{equation}
where $\gdom$ denotes the partial order often referred to as the \emph{dominance order} on the set of partitions of $k$, 
and $K(\cdot,\cdot)$ are the \emph{Kostka numbers} (see \cite{Ceccherini-book} for definitions).

It is clear from  \eqref{eqn:eg:basic:5} that there exists $\lambda \vdash k$ with $\length(\lambda) = d$, such that
 \[
 m_{0,\lambda}(V_{k,d,\eps})>0
 \] 
 which  shows that  the restriction,
 $\length(\lambda) = O(d)$
 (in the case $i=0$) in Part~\eqref {itemlabel:thm:main1:a} of Theorem \ref{thm:main1} is tight up to a multiplicative factor. \\

It follows from the $\mathfrak{S}_k$-equivariant Poincar\'e duality 
(see for example \cite[Theorem 3.23]{BC-imrn}), that
\begin{equation}
\label{eqn:eg:basic:6}
\HH^{k-1}(V_{k,d,\eps})  \cong_{\mathfrak{S}_k} \bigoplus_{\substack{
\lambda = (\lambda_1,\ldots,\lambda_d) \in \Z_{\geq 0}^d\\
\sum_{i=1}^d \lambda_i = k}} \left(\mathbb{S}^{^{t}\widetilde{\lambda}} \oplus \bigoplus_{\mu \gdom \widetilde{\lambda}, \mu \neq \widetilde{\lambda}} K(\mu,\widetilde{\lambda})\ \mathbb{S}^{^{t}\mu}\right).
\end{equation}

This shows that there exists $\lambda \vdash k$ with $\length(^{t}\lambda) = d$, such that
 \[
 m_{k-1,\lambda}(V_{k,d,\eps})>0
 \] 
So the restriction,
 $\length(^{t} \lambda) = O(d)$
 (in the case $i=0$) in the Part~\eqref{itemlabel:thm:main1:b}  of Theorem \ref{thm:main1} is also tight up to a multiplicative factor.
\end{example}

\subsubsection{Role played by Vandermonde varieties}
\label{subsubsec:Vandermonde}
The proof of Theorem~\ref{thm:main1} stated in the previous section depends crucially on a similar restriction theorem for a class of symmetric semi-algebraic sets which are particularly simple 
to define -- namely, \emph{Vandermonde varieties}.
Vandermonde varieties have been studied widely in a series of papers by Arnold \cite{Arnold}, Giventhal \cite{Giventhal}, Kostov \cite{Kostov} amongst others,
mainly from a topological point of view. The representation-theoretic results we prove in this paper 
on their cohomology modules are new and might be
of independent interest.
The restrictions on the $\mathfrak{S}_k$-module structure for 
Vandermonde varieties, produce  via an application of an argument involving  the (equivariant)  Leray
spectral sequence,  similar (slightly looser) restrictions  on the cohomology modules of arbitrary symmetric semi-algebraic sets defined by
quantifier-free formula involving qualities and inequalities of symmetric polynomials of degrees bounded by $d \leq k$  
(cf. Theorem~\ref{thm:main1}). \\

The intersections of the level sets of the  first $d$ (weighted) Newton power sums in $\R^k$ for some $d \leq k$ have been called  
Vandermonde varieties by Arnold  \cite{Arnold} and Giventhal \cite{Giventhal},   
who studied their topological properties in detail. 
When the 
weights are all equal the Vandermonde varieties are also symmetric with respect to the standard action (by permuting 
coordinates) of the symmetric group $\mathfrak{S}_k$, and thus the cohomology groups of the Vandermonde varieties acquire the structure of  finite dimensional $\mathfrak{S}_k$-modules.

\begin{remark}
\label{rem:generators}
If one replaces in the definition of Vandermonde varieties, 
the Newton power sums with any other set of generators of the ring of $\mathfrak{S}_k$-invariant polynomials (for example the elementary symmetric polynomials), the intersection of the level sets of the generators of degree at most $d$ give  the same class of real varieties. 
Indeed, Vandermonde varieties can be defined as level sets of the first $d$ generators of the
invariant ring of any finite reflection group, and many results and techniques introduced in the current paper extend to more general reflection groups. However, the case of the symmetric group is the most important from the point of view of applications, and we restrict ourselves to this special case in this paper.
\end{remark}

In their foundational work on the topic, Arnold \cite{Arnold}, Giventhal \cite{Giventhal} and Kostov \cite{Kostov},
proved that the intersection of a symmetric Vandermonde variety with the Weyl chamber in $\R^k$, defined by the inequalities 
$X_1 \leq \cdots \leq X_k$ is contractible if non-empty, which in turn implies that the quotient space of a symmetric Vandermonde variety is contractible if non-empty.  \\

As a first step towards proving Theorem~\ref{thm:main1} we  study the $\mathfrak{S}_k$-module structure of the cohomology
groups of  symmetric Vandermonde varieties  themselves  (not just their quotient space).
We prove the following theorem.

\begin{theorem}
\label{thm:vandermonde}
Let $d,k \in \Z_{>0}, d \geq 2$, 
$\y =(y_1,\ldots,y_d) \in \R^d$, and 
let $V_{d,\y}^{(k)}$ denote the Vandermonde variety defined by
$p_1^{(k)} = y_1, \ldots, p_d^{(k)} = y_d$, where $p_j^{(k)} = \sum_{i=1}^{k} X_i^j$.
Then, for  all $\lambda \vdash k$:
\begin{enumerate}[(a)]
\item
\label{itemlabel:thm:vandermonde:a}
\begin{equation*}
m_{i,\lambda}(V_{d,\y}^{(k)})=0,
\mbox{ for } i \leq \length(\lambda)- 2d + 1,
\end{equation*}
or equivalently,

\begin{equation}
\label{eqn:thm:vandermonde:a}
\max_{\lambda \in \Par_i(V^{(k)}_{d,\y})} \length(\lambda) < i+2d-1;
\end{equation}
\item
\label{itemlabel:thm:vandermonde:b}
\begin{equation*}
m_{i,\lambda}(V_{d,\y}^{(k)})=0
\mbox{ for } i \geq k - \length(^{t}\lambda)+1,
\end{equation*}
or equivalently,
\begin{equation*}
\max_{\lambda \in \Par_i(V^{(k)}_{d,\y})} \length(^{t}\lambda) <  k - i +1.
\end{equation*}
\end{enumerate}
\end{theorem}

\begin{remark}[Cases $d=1,2$]
\label{rem:d=1}
The case $d=1$ is omitted in Theorem \ref{thm:vandermonde}. Indeed, Part \eqref{itemlabel:thm:vandermonde:a} is not true as stated in the case $d=1$. In this case,
$V_{d,\y}^{(k)}$ is the hyperplane defined by the equation 
\[
\sum_{i=1}^k X_i = y_1,
\] 
and is $\mathfrak{S}_k$-equivariantly contractible to the point $\frac{1}{k}\cdot(y_1,\ldots,y_1)$.
Hence
\begin{eqnarray*}
\HH^i(V_{d,\y}^{(k)}) &\cong_{\mathfrak{S}_k}& \mathbb{S}^{(k)}, \mbox{ if $i=0$}, \\
& \cong_{\mathfrak{S}_k} & 0, \mbox{ otherwise}
\end{eqnarray*}
(recall that the Specht module $\mathbb{S}^\lambda$ for $\lambda$ equal to the trivial partition $(k)$
is isomorphic to the one-dimensional trivial representation).  
It follows that for $i=0$, 
\[
m_{i,\lambda}(V_{d,\y}^{(k)})=1\neq 0,
\] 
but 
\[
\length((k)) = 1 \not<  i +2 d - 1 =  0 + 2 -1 = 1,
\]
which violates \eqref{eqn:thm:vandermonde:a}.
 
On the other hand, the case $d=2$ already indicates that the bounds in
Theorem \ref{thm:vandermonde} is sharp.

If $d=2$ and $k \geq 3$, the Vandermonde variety $V^{(k)}_{d,\y}$  is the defined by the equation 
\[
\sum_{i=1}^k X_i = y_1, \sum_{i=1}^{k} X_i^2 = y_2,
\] 
and can be empty, a point,  or semi-algebraically homeomorphic to a sphere of dimension $k-2$
(depending on 
whether
$y_1^2 - k y_2$ is $> 0, = 0$, or $< 0$,
 respectively).
In the last case 
(i.e. when $y_1^2 - k y_2 < 0$):

\begin{eqnarray}
\nonumber
\HH^i(V_{2,\y}^{(k)}) &\cong_{\mathfrak{S}_k}& \mathbb{S}^{(k)}, \mbox{ if $i=0$}, \\
\label{eqn:d=2}
\HH^i(V_{2,\y}^{(k)}) &\cong_{\mathfrak{S}_k}& \mathbb{S}^{1^k}, \mbox{ if $i=k-2$}, \\
\nonumber
& \cong_{\mathfrak{S}_k} & 0, \mbox{ otherwise}
\end{eqnarray}
(see Subsection~\ref{eg:V-k-2} below for a proof).

It follows that for $i=k-2, k \geq 3$ and $y_2 >0$, 
\[
m_{i,\lambda}(V_{d,\y}^{(k)})= 1 \neq 0 \Rightarrow 1^k \in \Par_{k-2}(V^{k}_{2,\y}),
\] 
and
\[
\max_{\lambda \in \Par_{k-2}(V^{(k)}_{2,\y})}  \length(\lambda) = \length(1^k) = k <  k-2  +2\cdot 2 - 1 =  k+1.
\]
\end{remark}

\subsubsection{Improvements over prior work}
Theorems~\ref{thm:main1} and \ref{thm:vandermonde} are improvements
over prior results in \cite{BC-imrn} (Theorem 2.5, Part (1))  having similar flavor  in several different ways.\\

Firstly,
the restrictions (cf. \eqref{eqn:BC-imrn})   on partitions given in \cite[Theorem 2.5]{BC-imrn} are in terms of  upper bounds on their \emph{ranks}  
rather than their lengths. 
While the length of a partition is an upper bound on its rank,
a partition having small rank can be arbitrarily long.  
For example, the partition $1^k:=(1,\ldots,1)$ has rank $1$, but its length
is clearly the maximum possible, namely $k$.  \\

Secondly, the restrictions  in \cite[Theorem 2.5]{BC-imrn} 
do not take into consideration the dimension (or the degree) of the cohomology groups under consideration.
In contrast,
the restrictions on  the partitions $\lambda$ given in Theorems~ \ref{thm:main1} and \ref{thm:vandermonde}  in the 
current paper,
do depend in a strong manner on the dimension  (or the degree) of the cohomology group.
As a result in small dimensions, we obtain that only the partitions with a small length can appear unlike the restrictions
obtained in \cite{BC-imrn}, where there were no non-trivial restriction on the length. The restriction on the length
is a key ingredient in the algorithmic result obtained in this paper. \\

The results of the current paper depend on:
\begin{enumerate}[(a)]
\item
results from the cohomological study of mirrored spaces due to
Davis \cite{Davis} and Solomon \cite{Solomon1968}, 
\item
fundamental results on Vandermonde varieties due to Arnold \cite{Arnold}, Giventhal \cite{Giventhal}
and Kostov \cite{Kostov}, and
\item
a careful topological analysis of certain regular cell complexes that arise in the process
of combining these results.
\end{enumerate}

In contrast, the proofs of the results in \cite{BC-imrn} are based essentially on equivariant Morse theory which plays no role in the current paper.
The reader who is curious about the interplay of  results coming from different areas  
and how they combine together in the study of Vandermonde varieties,  can skip forward to 
Examples~\ref{eg:V-k-2} and \ref{eg:V-4-3} where the examples of Vandermonde varieties of
degree $2$ in $\R^k$, $k\geq 3$, and that of degree $3$ in $\R^4$ are worked out in full detail. \\

The rest of the paper is dedicated to the proofs  of Theorems~\ref{thm:alg},  \ref{thm:main1},  and \ref{thm:vandermonde}.
In Section~\ref{sec:reptheory-coda}, 
we prove a few preliminary results on the Solomon decomposition of the cohomology groups of
mirrored spaces that play an important role in the rest of the paper. We introduce all necessary background material referring the reader to Appendix~\ref{sec:Appendix} for 
the more basic material on representation theory of finite groups and of the symmetric groups in particular
that we utilize.
In Section \ref{sec:outline+example} we give outlines of  the proofs of Theorems~\ref{thm:main1} and \ref{thm:vandermonde},  
and also describe
two important examples illustrating the main steps.
In Section \ref{sec:proofs1}, we give the proofs of Theorems~\ref{thm:main1} and \ref{thm:vandermonde}.
In Section \ref{sec:proofs2} we give the proof of Theorem~\ref{thm:alg} after introducing the 
necessary preliminary results.

\section{Solomon modules and mirrored spaces}
\label{sec:reptheory-coda}
This section is divided into two subsections.
In the first subsection  (Subsection~\ref{subsec:Solomon}) we discuss the representation theory of the symmetric groups by viewing them  as examples of finite Coxeter groups  
drawing on the work of Solomon \cite{Solomon1968}. In particular, we show how to obtain the isotypic decomposition of the  Solomon modules (which are certain representations of symmetric groups that we define in this section), and prove certain quantitative statements about them that are key to the proofs of the main theorems of the paper. These results (namely, Propositions~\ref{prop:Solomon-to-Specht} and \ref{prop:Solomon} and Corollary~\ref{cor:Solomon-to-Specht}) are the only results from this section that are used later in the paper. \\

In the second subsection (Subsection~\ref{subsec:mirror}) we introduce mirrored spaces and discuss a key theorem
(cf. Theorem~\ref{thm:Davis}) giving a formula for the cohomology of a mirrored space in
terms of certain Solomon modules. This theorem plays a central role in the proof of Theorem~\ref{thm:vandermonde}.

\subsection{Symmetric groups as Coxeter groups and
properties of Solomon modules}
\label{subsec:Solomon}
Recall that a Coxeter pair $(\Ww,\Ss)$, consists of  a group $\Ww$ and a set of generators, $\Ss = \{ s_i \mid i \in I\}$, of $\Ww$  each having order $2$, 
and numbers $(m_{i,j})_{i,j \in I}$ such that $(s_i s_j)^{m_{ij}} = e$. \\

Our main example of a  Coxeter groups will be the symmetric group $\mathfrak{S}_k$ 
considered as a Coxeter group with 
the set of Coxeter generators, $\Coxeter(k) = \{ s_i = (i,i+1) \mid 1 \leq i \leq k-1 \}$
(here $(i,i+1)$ denotes the permutation of $(1,\ldots,k)$ which exchanges $i$ and $i+1$
keeping all other elements fixed). \\

We will need the notion of \emph{length} of an element of a Coxeter group.

\begin{notation}[Length of an element of $\Ww$]
\label{not:lenth-of-word}
Given Coxeter pair $(\Ww,\Ss)$, with $\Ss = \{ s_i \mid i \in I\}$,
and an element $w = s_{i_1}\cdots s_{i_m}  \in \Ww$, we call $m$ to be the \emph{length of $w$}
(denoted $\ell(w)$), if $m$ is minimal amongst all such expressions for $w$. 
\end{notation}

\begin{example}
\label{eg:length-of-words}
If $(\Ww,\Ss)  = (\mathfrak{S}_3,\Coxeter(3))$, the lengths of the various elements of $\mathfrak{S}_3$
viewed as permutations are displayed below.
$$
\displaylines{
\ell(123) = 0, \cr
\ell(132) = \ell(213) = 1, \cr
\ell(231) = \ell(312)   = 2, \cr
\ell(321) = 3.
}
$$
\end{example}

Following the same notation as in \cite{Davis-book},
for $J \subset \Coxeter(k)$, we denote by $\mathfrak{S}_k^J$ the subgroup of $\mathfrak{S}_k$ generated by $J$,
and let 
\[
A^J = \Q[\mathfrak{S}_k^J].
\]
We will write $N_J = \card(\mathfrak{S}_k^J)$.
For $J \subset \Coxeter(k)$, let 
\begin{eqnarray}
\label{eqn:def:xi}
\xi_J
&=&  
N_J^{-1} \sum_{w \in \mathfrak{S}_k^J} w, \\
\label{eqn:def:eta}
\eta_J
 &=&  
N_J^{-1} 
\sum_{w \in \mathfrak{S}_k^J}  (-1)^{\ell(w)} w.
\end{eqnarray}

For $P,Q \subset \Coxeter(k), P \cap Q = \emptyset$, we denote 
(following \cite{Solomon1968})

\begin{equation}
\label{eqn:def-of-Psi}
\Psi_{P,Q}= A^{P \cup Q}  \xi_P \eta_Q.
\end{equation}

\subsubsection{Algebras, tensor products and representations}
Let $\Ww$ be a group and $A = \Q[\Ww]$ be the group algebra of $\Ww$. A left ideal $I \subset A$ is then
a (left) $\Ww$-module.
Now let $\Ww',\Ww''$ be two Coxeter groups, and $A' = \Q[\Ww'], A'' = \Q[\Ww'']$ be their group algebras.
Then, the tensor product $A' \otimes_\Q A''$ is again an algebra, where the multiplication is defined by
$(a' \otimes a'')\cdot (b' \otimes b'') = a'a'' \otimes b'b''$. Moreover, $A' \otimes_\Q A''$ is isomorphic
as an $\Q$-algebra to $A = \Q[\Ww' \times \Ww'']$, where the isomorphism is given by
\[
w' \otimes w'' \mapsto (w',w''), w' \in \Ww', w'' \in \Ww''.
\]
If $\Ww',\Ww''$ are subgroups of $\Ww$, such that $\Ww$ is the (internal) direct product of $\Ww',\Ww''$, then the isomorphism,
\begin{equation}
\label{eqn:isomorphism}
A' \otimes_\Q A'' \rightarrow A
\end{equation}
 is given by $w' \otimes w'' \mapsto w'w''$. \\

Finally, if $I'$ is a left ideal of $A'$, and $I''$  a left ideal of $A''$, then $I' \otimes_\Q I''$ is a 
left ideal of the algebra $A' \otimes_\Q A''$. If we denote by $\Psi'$ (resp. $\Psi''$) the $\Ww'$-representation
(resp. $\Ww''$-representation) corresponding to $I'$ (resp. $I''$), then we will denote by
$\Psi' \boxtimes \Psi''$ the $(W' \times W'')$-representation corresponding to $I' \otimes_\Q I''$.
We will need later the following proposition.

\begin{proposition}
\label{prop:Solomon0}
Let $k > 0$, and $1 \leq q \leq k-1$. Let $P',Q' \subset \{s_1,\ldots,s_{q-1} \}$, $P'',Q'' \subset \{s_{q+1},\ldots,s_{k-1}\}$
such that  
$P' \cap Q' = P'' \cap Q'' = \emptyset$, and 
\begin{eqnarray*}
P' \cup Q' &=&  \{s_1,\ldots,s_{q-1}\},\\
P'' \cup Q'' &=& \{s_{q+1},\ldots,s_{k-1}\}.
\end{eqnarray*}
Then, 
\begin{equation}
\label{eqn:prop:Solomon}
\Psi_{P' \cup P'',Q' \cup Q''} \cong_{\mathfrak{S}_q \times \mathfrak{S}_{k-q}}  \Psi_{P',Q'} \boxtimes \Psi_{P'',Q''}.
\end{equation}
\end{proposition} 

\begin{proof}
Let $J' = P' \cup Q' = \{s_1,\ldots,s_{q-1}\}$, $J'' = P'' \cup Q'' = \{s_{q+1},\ldots,s_{k-1}\}$, and $J = J \cup J' = 
\Coxeter(k) - \{s_q\}$.
Observe first that the elements of $\mathfrak{S}_k^{J'}$ commute with the elements of $\mathfrak{S}_k^{J''}$,
$\mathfrak{S}_k^J = \mathfrak{S}_k^{J'}\mathfrak{S}_k^{J''}$, and $\mathfrak{S}_k^{J'} \cap \mathfrak{S}_k^{J''} = 
\{e\}$.
Hence it follows that 
$\mathfrak{S}_k^{J}$ is isomorphic to the  direct product of the subgroups
 $\mathfrak{S}_k^{J'}$ and $\mathfrak{S}_k^{J''}$. In particular, every element $w \in \mathfrak{S}_k^{J}$ can be written uniquely as 
 \[
 w= w' w''
 \] 
 with $w' \in \mathfrak{S}_k^{J'}$ and 
 $w'' \in \mathfrak{S}_k^{J''}$.
 Moreover, 
 \[
 \ell(w) = \ell(w')+ \ell(w'').
 \]
 
It follows from \eqref{eqn:def-of-Psi} that
$ \Psi_{P',Q'}$  (resp.  $\Psi_{P'',Q''}$) 
is the $\mathfrak{S}_k^{J'}$-representation (resp. $\mathfrak{S}_{k}^{J''}$-representation)
corresponding to the left ideal  $I' = A^{J'} \xi_{P'}\eta_{Q'}$ of $A^{J'}$  (resp. $I'' = A^{J''} \xi_{P''}\eta_{Q''}$ of
$A^{J''}$). \\

Moreover, there is an isomorphism of $\Q$-algebras  (see \eqref{eqn:isomorphism}) 
$\phi_{J',J'}:A^{J'} \otimes_\Q A^{J''} \rightarrow A^J$,
defined by $w' \otimes w'' \mapsto w'w''$.
It suffices to prove that $\phi_{J',J''}$ carries the left ideal $I' \otimes_\Q I''$ of $A^{J'} \otimes_\Q A^{J''}$ surjectively to the left  ideal $I = A^J \xi_{P' \cup P'',Q' \cup Q''}$ of $A^J$. \\
 
Since, $I  = A^{J} \xi_{P' \cup P''}\eta_{Q' \cup Q''}$ is spanned by
the elements $w \xi_{P' \cup P''}\eta_{Q' \cup Q''}, w \in \mathfrak{S}_k^{J}$ it suffices to prove that 
\[
w \xi_{P' \cup P''}\eta_{Q' \cup Q''} \in \phi_{J',J''}(A^{J'} \xi_{P'}\eta_{Q'} \otimes_\Q A^{J''} \xi_{P''}\eta_{Q''})
\] 
for every $w \in \mathfrak{S}_k^{J}$. \\

 Using the fact that every element $w \in \mathfrak{S}_k^{J' \cup J''}$ can be written uniquely as $w= w' \cdot w''$ with $w' \in \mathfrak{S}_k^{J'}$ and 
 $w'' \in \mathfrak{S}_k^{J''}$,
 with
 \[
 \ell(w) = \ell(w')+ \ell(w''),
 \]
and \eqref{eqn:def:xi} and \eqref{eqn:def:eta}, we have
\begin{eqnarray*}
\frac{N_{P'}N_{P''}}{N_{P'\cup P''}}  \xi_{P'}\xi_{P''} &=&
\xi_{P' \cup P''}, \\
\frac{N_{Q'}N_{Q''}}{N_{Q'\cup Q''}}  \xi_{Q'}\xi_{Q''} &=&
\xi_{Q' \cup Q''}.
\end{eqnarray*}
Hence
\begin{equation}
\label{eqn:prop:Solomon:PQ}
 \xi_{P' \cup P''}\eta_{Q' \cup Q''} =
\frac{N_{P'}N_{P''}N_{Q'}N_{Q''}}{N_{P'\cup P''}N_{Q' \cup Q''}}  
\xi_{P'}\xi_{P''}\eta_{Q'}\eta_{Q''}.\\
\end{equation}

Now $w$ can be written (uniquely)  as $w= w' w''$ with $w' \in \mathfrak{S}_k^{J'}$ and 
 $w'' \in \mathfrak{S}_k^{J''}$, and hence
 \begin{eqnarray*}
 &&w \xi_{P' \cup P''}\eta_{Q' \cup Q''}\\
 &=& w'w'' \xi_{P' \cup P''}\eta_{Q' \cup Q''} \\
&=& \frac{N_{P'}N_{P''}N_{Q'}N_{Q''}}{N_{P'\cup P''}N_{Q' \cup Q''}}  
w'w''\xi_{P'}\xi_{P''}\eta_{Q'}\eta_{Q''}
\mbox{ (using \eqref{eqn:prop:Solomon:PQ})}\\
&=& 
\frac{N_{P'}N_{P''}N_{Q'}N_{Q''}}{N_{P'\cup P''}N_{Q' \cup Q''}}  
w'w''\xi_{P'}\eta_{Q'}\xi_{P''}\eta_{Q''} 
 \mbox{ (elements of  $A^{J'}$ and $A^{J''}$ commute)} \\
&=&
\phi_{J',J''}\left(  \frac{N_{P'}N_{P''}N_{Q'}N_{Q''}}{N_{P'\cup P''}N_{Q' \cup Q''}}     w'\xi_{P'}\eta_{Q'} \otimes w''\xi_{P''}\eta_{Q''}\right).
 \end{eqnarray*}
This finishes the proof.
\end{proof}

\begin{notation}[Solomon modules]
\label{not:T}
For ease of notation we will denote the representation $\Psi_{\Coxeter(k) - T,T}^{(k)}$ by
$\Psi_T^{(k)}$. We will call $\Psi_T^{(k)}$ the \emph{Solomon module indexed by $T$}.
\end{notation}

\begin{remark}
\label{rem:Solomon-to-Specht1}
The Solomon modules  $\Psi_T^{(k)}$ may be understood as analogs of Specht modules (cf. Definition~\ref{def:Specht}), but defined in terms of MacMahon's tableau \cite[Vol 1, Chapter 1, Sect IV, 129.]{MacMahon} 
rather than Young's tableau (cf. Definition~\ref{def:Young-tableau})
where the role of partitions is replaced by that of compositions (cf. Notation~\ref{not:Partition1}).
Unlike the Specht modules, the representations $\Psi_T^{(k)}$ need not be irreducible
(see Example~\ref{eg:Solomon-to-Specht}).
But we are able to obtain a necessary condition for  a Specht module to appear with positive multiplicity in 
$\Psi_T^{(k)}$ using a recursive formula due to Solomon \cite[Corollary 3.2]{Solomon1968} (cf. Proposition~\ref{prop:Solomon} below).
\end{remark}

\begin{remark}
As remarked above the representations $\Psi_T^{(k)}$  need not be irreducible in general.
However, it is easy to see from \eqref{eqn:def-of-Psi}, Notation~\ref{not:T} and 
Definition~\ref{def:Specht},  that in the following two special cases, they are indeed irreducible.
\begin{eqnarray}
\label{eqn:eg:1}
\Psi_{\emptyset}^{(k)} & \cong_{\mathfrak{S}_k}& \mathbb{S}^{(k)}  \cong_{\mathfrak{S}_k} 1_{\mathfrak{S}_k},\\
\label{eqn:eg:2}
\Psi_{\Coxeter(k)}^{(k)} &\cong_{\mathfrak{S}_k}& \mathbb{S}^{(1^k)} \cong_{\mathfrak{S}_k} \mathbf{sign}_k.
\end{eqnarray}

Another easy consequence of \eqref{eqn:def-of-Psi} is
\begin{eqnarray}
\label{eqn:transpose}
\Psi_{\Coxeter(k) - T}^{(k)} &\cong_{\mathfrak{S}_k}&  \Psi_{T}^{(k)} \otimes \mathbf{sign}_k.
\end{eqnarray}
\end{remark}

\subsubsection{Relation between Solomon modules and Specht modules}
We next prove a recursive formula for computing the multiplicities of Specht modules in the Solomon modules (Proposition~\ref{prop:Solomon-to-Specht} and Corollary~\ref{cor:Solomon-to-Specht}).
We also prove a condition  (in terms of $k$ and the cardinality of $T$) on partitions $\lambda$ which needs to be satisfied for
$
\mult_{\mathbb{S}^\lambda}(\Psi_T^{(k)}) > 0
$
to hold (Proposition~\ref{prop:Solomon}).

\begin{proposition}
\label{prop:Solomon-to-Specht}
Let $k \geq 1$, 
$T \subset \Coxeter(k)$, and 
\[
q =  \max \{i \;\mid\; s_i \in T \}.
\]
Then,
\[
\ind_{\mathfrak{S}_q \times \mathfrak{S}_{k-q}
}^{\mathfrak{S}_k} \left( \Psi_{T -\{s_q\}}^{(q)} \boxtimes 1_{\mathfrak{S}_{k-q}}\right) \cong_{\mathfrak{S}_k}
 \Psi_{T -\{s_q\}}^{(k)} \oplus \Psi_{T}^{(k)}.
 \]
\end{proposition}

\begin{proof}
Let 
\begin{eqnarray*}
Q' &=& T -\{s_q\}, \\
Q'' &=& \emptyset, \\
P' &=& \{s_1,\ldots,s_{q-1}\} -  T, \\
P''&=& \{s_{q+1},\ldots,s_{k-1}\}.
\end{eqnarray*}

Notice that 
\begin{eqnarray*}
\mathfrak{S}_k^{P' \cup Q'} &\cong& \mathfrak{S}_{q}, \\
\mathfrak{S}_k^{P'' \cup Q''} &\cong& \mathfrak{S}_{k-q}, \\
\mathfrak{S}_k^{P' \cup Q' \cup Q''} &\cong& \mathfrak{S}_q \times \mathfrak{S}_{k-q}.
\end{eqnarray*}

\begin{claim}
\label{claim:proof:prop:Solomon:1}
\begin{equation}
\label{eqn:proof:prop:Solomon:1}
\Psi_{P' \cup P'',Q'}    \cong_{\mathfrak{S}_q \times \mathfrak{S}_{k-q}}  
\Psi_{Q'}^{(q)} \boxtimes 1_{\mathfrak{S}_{k-q}}.
\end{equation}
\end{claim}

\begin{proof}[Proof of Claim~\ref{claim:proof:prop:Solomon:1}]
Observe that  it follows from the definitions of $P',P'',Q', Q''$ that
\[
\Psi_{P' \cup P'',Q'}  = \Psi_{ P' \cup P'', Q' \cup Q''},
\] 
and 
\[
 \Psi_{P',Q'} \boxtimes \Psi_{P'',Q''} = \Psi_{P',Q'} \boxtimes \Psi_{\{s_{q+1},\ldots,s_{k-1}\},\emptyset}.
\]
Now,
\[
\Psi_{P' \cup P'', Q' \cup Q'', }  \cong_{\mathfrak{S}_q \times \mathfrak{S}_{k-q}}  \Psi_{P',Q'} \boxtimes \Psi_{P'',Q''}
\]
using Proposition~\ref{prop:Solomon0}.
Finally, from the fact that $P' \cup Q' = \{s_1,\ldots.s_{q-1}\}$, $Q'' = \emptyset$, and $P'' = \{s_{q+1},\ldots,s_{k-1}\}$,
we have
\[
\Psi_{P',Q'} \cong_{\mathfrak{S}_{q}} \Psi_{Q'}^{(q)},
\]
and
\[
\Psi_{ \{s_{q+1},\ldots,s_{k-1}\},\emptyset} \cong_{\mathfrak{S}_{k-q}} 1_{\mathfrak{S}_{k-q}}.
\]
This finishes the proof of the claim.
\end{proof}

\begin{claim}
\label{claim:proof:prop:Solomon:2}
\begin{equation}
\label{eqn:proof:prop:Solomon:2}
\ind_{\mathfrak{S}_q \times \mathfrak{S}_{k-q}
}^{\mathfrak{S}_k} \Psi_{P' \cup P'',Q'} \cong_{\mathfrak{S}_k} 
 \Psi_{Q'}^{(k)} \oplus \Psi_{T}^{(k)}.
\end{equation}
\end{claim}

\begin{proof}[Proof of Claim~\ref{claim:proof:prop:Solomon:2}]
Observe that 
\[
\Psi_{P' \cup P'' \cup\{s_q\},Q'} \oplus \Psi_{P' \cup P'',T} = \Psi_{Q'}^{(k)} \oplus \Psi_{T}^{(k)}.
\]
It follows directly from \cite[Corollarly 3.2]{Solomon1968} that
\[
\ind_{\mathfrak{S}_q \times \mathfrak{S}_{k-q}
}^{\mathfrak{S}_k} \Psi_{P' \cup P'',Q'} \cong_{\mathfrak{S}_k} 
\Psi_{P' \cup P'' \cup\{s_q\},Q'} \oplus \Psi_{P' \cup P'',T}
\]
which completes the proof of the claim.
\end{proof}

The proposition now follows directly from Claims~\ref{claim:proof:prop:Solomon:1} and \ref{claim:proof:prop:Solomon:2}.
\end{proof}

The following corollary of Proposition~\ref{prop:Solomon-to-Specht} will be useful in designing an algorithm
for computing isotypic decomposition of the Solomon modules $\Psi_T^{(k)}$.

\begin{corollary}
\label{cor:Solomon-to-Specht}
Let $k \geq 1$, 
$T \subset \Coxeter(k)$, and 
\[
q =  \max \{i \;\mid\; s_i \in T \}.
\]
Then,
for any $\lambda \vdash k$, 

\begin{equation}
\label{eqn:cor:Solomon-to-Specht}
\mult_{\mathbb{S}^\lambda}(\Psi_{T}^{(k)}) = \mult_{\mathbb{S}^\lambda}\left( \ind_{\mathfrak{S}_q \times \mathfrak{S}_{k-q}}^{\mathfrak{S}_k} \left( \Psi_{T -\{s_q\}}^{(q)}\boxtimes 1_{\mathfrak{S}_{k-q}}\right) \right) - \mult_{\mathbb{S}^\lambda}( \Psi_{T -\{s_q\}}^{(k)} ).
\end{equation}
\end{corollary}

\begin{proof}
Follows directly from Proposition~\ref{prop:Solomon-to-Specht} and  Schur's Lemma (Lemma~\ref{lem:Schur} in the Appendix).
\end{proof}

Before proceeding further we recall a classical formula -- namely Pieri's rule.

\begin{notation}
\label{not:Pieri}
For $0 \leq q \leq k$, and 
$\mu =(\mu_1,\ldots,\mu_m) \vdash q$, we denote by $S(\mu,k)$ the set consisting of partitions
either of the form 
$\lambda = (\lambda_1,\ldots, \lambda_m) \vdash k$
satisfying:
\begin{equation}
\label{eqn:Pieri1}
\lambda_1 \geq \mu_1 \geq \lambda_2 \geq \mu_2\geq \cdots \geq \lambda_{m} \geq 
\mu_{m}, 
\end{equation}
and 
\begin{equation}
\label{eqn:Pieri2}
\sum_{i=1}^m (\lambda_i - \mu_i) = k-q.
\end{equation}
or of the form 
$\lambda = (\lambda_1,\ldots, \lambda_m,\lambda_{m+1}) \vdash k$
satisfying:
\begin{equation}
\label{eqn:Pieri1'}
\lambda_1 \geq \mu_1 \geq \lambda_2 \geq \mu_2\geq \cdots \geq \lambda_{m} \geq 
\mu_{m}\geq \lambda_{m+1} > 0, 
\end{equation}
and 
\begin{equation}
\label{eqn:Pieri2'}
\lambda_{m+1} + \sum_{i=1}^m (\lambda_i - \mu_i) = k-q.
\end{equation}

In other words $\lambda \in S(\mu,k)$, if and only if $\lambda \vdash k$ and the Young diagram corresponding to $\lambda$ is obtained from that of $\mu$ by adding $k-q$ boxes, such that no two boxes are added in the same column.  
\end{notation}

\begin{example}
\label{eg:Pieri}
For example,
\[
S((2,1),4) = \{(3,1), (2,2), (2,1,1)\}.
\]
\end{example}

The significance of the set $S(\mu,k)$ is encapsulated in the following lemma. 
With the same notation as in Notation~\ref{not:Pieri}:
\begin{lemma}[Pieri's rule]
\label{lem:Pieri}
\begin{enumerate}[(a)]
\item
\label{itemlabel:lem:Pieri:a}
\[
\Ind_{\mathfrak{S}_{p} \times \mathfrak{S}_{k-p}}^{\mathfrak{S}_k} \left(\mathbb{S}^{\mu} \boxtimes 1_{\mathfrak{S}_{k-p}}\right) \cong_{\mathfrak{S}_k}  \bigoplus_{\lambda \in S(\mu,k)} \mathbb{S}^\lambda.
\]
\item
\label{itemlabel:lem:Pieri:b}
For each $\lambda \in S(\mu,k)$, $\length(\mu) \leq \length(\lambda) \leq\length(\mu)+1$. 
\end{enumerate}
\end{lemma}

\begin{proof}
Part~\eqref{itemlabel:lem:Pieri:a} is just Pieri's rule (see for instance \cite[Page 109]{Manivel}).
Part~\eqref{itemlabel:lem:Pieri:b} is obvious from definition of $S(\mu,k)$ (cf. Notation~\ref{not:Pieri}).
\end{proof}

The following lemma in conjunction with Lemma~\ref{lem:Pieri} will be used in the complexity analysis of 
Algorithm~\ref{alg:mult}.
\begin{lemma}
\label{lem:Pieri-quantitative}
Let $k \ge 1$, $0 \leq q \leq k$, and 
$\mu \vdash q$.
Then,
\[
\card(S(\mu,k)) \leq (k - \mu_1+1)(\mu_1 - \mu_2+1) \cdots (\mu_{m-1} - \mu_m+1)(\mu_m+1) \leq k^{\length(\mu)+1}.
\]
\end{lemma}

\begin{proof}
Obvious from 
Eqns. \eqref{eqn:Pieri1}, \eqref{eqn:Pieri2}, \eqref{eqn:Pieri1'} and \eqref{eqn:Pieri2'}.
\end{proof}

\begin{remark}
\label{rem:Solomon-to-Specht2}
Corollary~\ref{cor:Solomon-to-Specht} gives us an inductive method (using double induction on $k$ and $\card(T)$)  for obtaining the isotypic decomposition of the Solomon modules $\Psi^{(k)}_T$,
since the Solomon modules that appear on the right hand side of \eqref{eqn:cor:Solomon-to-Specht}
are either of a strictly smaller symmetric group since
$q < k$, 
or the Solomon module of $\mathfrak{S}_k$ but with respect to a smaller set of Coxeter elements
(since $\card(T - \{s_q\}) = \card(T) -1 < \card(T)$). Moreover, the isotypic decomposition of the
representation $\ind_{\mathfrak{S}_q \times \mathfrak{S}_{k-q}
}^{\mathfrak{S}_k} \left( \Psi_{T -\{s_q\}}^{(q)}\boxtimes 1_{\mathfrak{S}_{k-q}}\right)$ can be computed from
that of $ \Psi_{T -\{s_q\}}^{(q)}$ using Part~\eqref{itemlabel:lem:Pieri:a} of  Lemma~\ref{lem:Pieri} (Pieri's rule). \\

For the base cases notice that $\Psi^{(k)}_T$ is isomorphic to the trivial representation, $1_{\mathfrak{S}_k} \cong_{\mathfrak{S}_k} \mathbb{S}^{(k)}$ if $T =\emptyset$,
and for $k=1$, the $\Psi^{(1)}_T$ is again the trivial representation (the only $T$ that can appear is the empty set).\\

This algorithm for computing the isotypic decomposition of $\Psi_T^{(k)}$ using the inductive method sketched above is formally described  in Algorithm~\ref{alg:mult} in Section~\ref{sec:proofs2}, where we analyze the complexity of this
algorithm as well. We illustrate the method here by giving an example.

\begin{example}
\label{eg:Solomon-to-Specht}
Let $k=4$ and $T = \{s_2\}$. We will use Proposition~\ref{prop:Solomon-to-Specht} to obtain the isotypic decomposition of $\Psi^{(4)}_T$. 
In this example $q = 2$. So applying Proposition~\ref{prop:Solomon-to-Specht} we obtain
\begin{equation}
\label{eqn:eg:Solomon-to-Specht:1}
\ind_{\mathfrak{S}_2 \times \mathfrak{S}_2}^{\mathfrak{S}_4} \left(\Psi^{(2)}_{\emptyset} \boxtimes 1_{\mathfrak{S}_2}\right) \cong_{\mathfrak{S}_4} \Psi_{\emptyset}^{(4)} \oplus \Psi^{(4)}_T.
\end{equation}

Now (using \eqref{eqn:eg:1})
\begin{eqnarray*}
\Psi_{\emptyset}^{(2)} &\cong_{\mathfrak{S}_2}& \mathbb{S}^{(2)},\\
\Psi_{\emptyset}^{(4)} &\cong_{\mathfrak{S}_4}& \mathbb{S}^{(4)}.
\end{eqnarray*}

Using Part \eqref{itemlabel:lem:Pieri:a} of Lemma~\ref{lem:Pieri} we get
\begin{eqnarray}
\nonumber
\ind_{\mathfrak{S}_2 \times \mathfrak{S}_2}^{\mathfrak{S}_4} \left(\Psi^{(2)}_{\emptyset} \boxtimes 1_{\mathfrak{S}_2}\right) &\cong_{\mathfrak{S}_4}& \ind_{\mathfrak{S}_2 \times \mathfrak{S}_2}^{\mathfrak{S}_4} \left(\mathbb{S}^{(2)} \boxtimes 1_{\mathfrak{S}_2}\right) \\
\label{eqn:eg:Solomon-to-Specht:2}
&\cong_{\mathfrak{S}_4}&
\mathbb{S}^{(4)} \oplus \mathbb{S}^{(3,1)} \oplus \mathbb{S}^{(2,2)}.
\end{eqnarray}

In conjunction, \eqref{eqn:eg:Solomon-to-Specht:1} and \eqref{eqn:eg:Solomon-to-Specht:2} implies
\[
\mathbb{S}^{(4)} \oplus \mathbb{S}^{(3,1)} \oplus \mathbb{S}^{(2,2)} \cong_{\mathfrak{S}_4} \mathbb{S}^{(4)}
\oplus \Psi^{(4)}_T,
\]
whence
\[
\Psi^{(4)}_T \cong_{\mathfrak{S}_4} \mathbb{S}^{(3,1)} \oplus \mathbb{S}^{(2,2)}.
\]
Note that this example also illustrates  the fact that the Solomon modules need not be irreducible.
\end{example}
\end{remark}

Another important consequence of Proposition~\ref{prop:Solomon-to-Specht} that will be important for us
is a bound (in terms of the cardinality of $T$ alone) 
on the lengths of the partitions corresponding to the Specht modules that can appear in the isotypic
decomposition of $\Psi_T^{(k)}$. 
We deduce such a bound in the following proposition.

\begin{proposition}
\label{prop:Solomon}
Let $k \geq 1$, 
$T \subset \Coxeter(k)$. Then, for $\lambda \vdash k$, 
\begin{equation*}
\mult_{\mathbb{S}^{\lambda}}(\Psi^{(k)}_{T}) =  0 \mbox{ if }
\length(\lambda) > \card(T)+1 \mbox{ or  if } \length(^{t}\lambda) >  k - \card(T). 
\end{equation*}
\end{proposition}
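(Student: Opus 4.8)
The plan is to prove both vanishing statements by induction on $\card(T)$, using the recursive structure of the representations $\Psi_T^{(k)}$ together with Solomon's formula and Pieri's rule (Lemma~\ref{lem:S-et-T}). I will focus on the first assertion, that $\mult_{\mathbb{S}^\lambda}(\Psi_T^{(k)}) = 0$ whenever $\length(\lambda) > \card(T) + 1$; the second assertion, $\mult_{\mathbb{S}^\lambda}(\Psi_T^{(k)}) = 0$ whenever $\length({}^t\lambda) > k - \card(T)$, then follows immediately by applying the first to $\Coxeter(k) - T$ and using the identity $\Psi_{\Coxeter(k)-T}^{(k)} \cong_{\mathfrak{S}_k} \Psi_T^{(k)} \otimes \mathbf{sign}_k$ from \eqref{eqn:transpose}, together with the fact that tensoring a Specht module $\mathbb{S}^\lambda$ with $\mathbf{sign}_k$ yields $\mathbb{S}^{{}^t\lambda}$, and that $\length({}^t\lambda)$ becomes $\length(\lambda)$ under transposition while $\card(\Coxeter(k)-T) + 1 = k - \card(T)$.

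For the base case $\card(T) = 0$, i.e. $T = \emptyset$, equation \eqref{eqn:eg:1} gives $\Psi_{\emptyset}^{(k)} \cong_{\mathfrak{S}_k} \mathbb{S}^{(k)}$, which only contains the partition $(k)$ of length $1 = \card(\emptyset) + 1$, so the claim holds. For the inductive step, I would like to express $\Psi_T^{(k)}$ in terms of representations $\Psi_{T'}^{(k-1)}$ (or $\Psi_{T'}^{(\kappa)}$ for smaller $\kappa$) with $\card(T') < \card(T)$, so that the inductive hypothesis applies. The natural tool here is Solomon's recursion (referred to in the excerpt as \cite[Corollary 3.2]{Solomon1968}, cf. the promised Eqn.~\eqref{eqn:prop:Solomon:1}): pick an element $s_i \in \Coxeter(k) \setminus T$ (if $T = \Coxeter(k)$ the claim is trivial since $\length(\lambda) \le k = \card(T)+1$ always), and write $\Psi_T^{(k)}$ as (a subquotient/summand of) an induced representation $\Ind_{\mathfrak{S}_k^J}^{\mathfrak{S}_k}$ of a product of $\Psi$-modules for the parabolic factors, where $J$ is obtained from $\Coxeter(k)$ by removing $s_i$ so that $\mathfrak{S}_k^J \cong \mathfrak{S}_a \times \mathfrak{S}_b$ with $a + b = k$. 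Each factor carries a $\Psi_{T_a}^{(a)}$, $\Psi_{T_b}^{(b)}$ with $\card(T_a) + \card(T_b) = \card(T)$, to which induction applies. Inducing from $\mathfrak{S}_a \times \mathfrak{S}_b$ and repeatedly applying Pieri's rule (Lemma~\ref{lem:S-et-T}), each induction step adds at most one row or leaves the length unchanged; counting shows the length of any constituent of $\Psi_T^{(k)}$ is at most $\max(\length(\lambda_a), 1) + \card(T_b) + \cdots$, which telescopes to $\card(T) + 1$.

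More concretely, the cleanest route is a single-step recursion: fix $s_{k-1} \in \Coxeter(k)$ and distinguish the two cases $s_{k-1} \in T$ and $s_{k-1} \notin T$. Restricting the idempotents $\xi_P^{(k)}, \eta_Q^{(k)}$ along the inclusion $\mathfrak{S}_{k-1} \hookrightarrow \mathfrak{S}_k$ (with $\mathfrak{S}_{k-1}$ generated by $\Coxeter(k) \setminus \{s_{k-1}\} = \Coxeter(k-1)$), Solomon's recursion expresses $\Psi_T^{(k)}$ as $\Ind_{\mathfrak{S}_{k-1}}^{\mathfrak{S}_k} \Psi_{T \cap \Coxeter(k-1)}^{(k-1)}$ when $s_{k-1} \notin T$ (up to the appropriate bookkeeping), and when $s_{k-1} \in T$ as a quotient of that with the "extra" symmetrization killing one row. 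By Lemma~\ref{lem:S-et-T}, $\Ind_{\mathfrak{S}_{k-1}}^{\mathfrak{S}_k} \mathbb{S}^\mu = \sum_{\lambda \in S(\mu)} \mathbb{S}^\lambda$, and every $\lambda \in S(\mu)$ satisfies $\length(\lambda) \le \length(\mu) + 1$. So if every constituent $\mathbb{S}^\mu$ of $\Psi_{T\cap\Coxeter(k-1)}^{(k-1)}$ has $\length(\mu) \le \card(T \cap \Coxeter(k-1)) + 1$ by the inductive hypothesis, then every constituent of $\Psi_T^{(k)}$ has length at most $\card(T \cap \Coxeter(k-1)) + 2 \le \card(T) + 1$ when $s_{k-1} \in T$, and at most $\card(T) + 1$ when $s_{k-1} \notin T$ (in the latter case the bound $\length(\mu) \le \card(T)+1$ already holds for $\mu$ and passing to $\lambda \in S(\mu)$ we must check the gain of one row is compensated — this requires using that the specific constituents appearing are controlled, which is exactly where the precise form of Solomon's recursion, not just the crude induced module, is needed).

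The main obstacle I anticipate is establishing the precise recursive formula for $\Psi_T^{(k)}$ and reading off from it the exact control on which Specht modules appear — in the case $s_{k-1} \notin T$, a naive application of Pieri's rule would allow the length to grow by one at each of the $k - \card(T) - 1$ induction steps, which is far too weak. One needs the sharper statement that the symmetrization by $\xi_P^{(k)}$ (symmetrizing over the parabolic containing $s_{k-1}$) annihilates precisely the constituents whose length would exceed the bound; equivalently, that $\Psi_T^{(k)}$ sits inside $\Ind_{\mathfrak{S}_{k-1} \times \mathfrak{S}_1}^{\mathfrak{S}_k}(\Psi^{(k-1)}_{\bullet} \boxtimes \mathbf{triv})$ and the relevant Pieri constituents that survive are only those obtained by adding a box to the \emph{first} row (not creating a new row). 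Making this rigorous will require carefully tracking the idempotents $\xi_P^{(k)}\eta_Q^{(k)}$ and invoking the explicit combinatorial content of \cite[Corollary 3.2]{Solomon1968}; I expect this — rather than the length bookkeeping, which is then routine — to be the technical heart of the proof.
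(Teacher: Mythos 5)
Your reduction of the second inequality to the first (via $\Psi_{\Coxeter(k)-T}^{(k)} \cong \Psi_T^{(k)}\otimes\mathbf{sign}_k$ and $\mathbb{S}^\lambda\otimes\mathbf{sign}_k\cong\mathbb{S}^{^{t}\lambda}$) is exactly the paper's, and your base case and your treatment of the case $s_{k-1}\in T$ also match the paper's argument. But the case $s_{k-1}\notin T$ is a genuine gap, and you say so yourself: your plan there hinges on the sharper claim that after the symmetrization only those Pieri constituents survive which do not create a new row. That claim is not established in your proposal (it essentially amounts to identifying $\Psi_T^{(k)}$ with the descent-class representation, i.e.\ counting standard tableaux with prescribed descent set, a substantial input that the paper never uses), so as written the induction does not close: in that case the inductive bound $\length(\mu)\le\card(T)+1$ for constituents of $\Psi_{T}^{(k-1)}$ only gives $\card(T)+2$ after inducing.

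The paper avoids this case altogether by organizing the induction differently: it is a double induction on $k$ and $t=\card(T)$, and at each step one removes an element $s\in T$ (not an arbitrary generator), normalized to $s=s_{k-1}$. Solomon's recursion, $\Ind_{\mathfrak{S}_{k-1}}^{\mathfrak{S}_k}\Psi_{T'}^{(k-1)}\cong\Psi_T^{(k)}\oplus\Psi_{T'}^{(k)}$ with $T'=T\setminus\{s_{k-1}\}$, exhibits $\Psi_T^{(k)}$ as a direct summand of the induced module (a summand --- not ``the induced module itself'' when $s_{k-1}\notin T$, nor ``a quotient killing one row'' when $s_{k-1}\in T$, as you phrase it). Then the crude estimate suffices: by the inductive hypothesis every constituent $\mu$ of $\Psi_{T'}^{(k-1)}$ satisfies $\length(\mu)\le\card(T')+1=\card(T)$, and Pieri adds at most one row, so every constituent of the induced module, hence of its summand $\Psi_T^{(k)}$, has length at most $\card(T)+1$. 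The extra row allowed by Pieri is paid for by the drop $\card(T')=\card(T)-1$, so no finer control of which constituents survive is needed, and with this choice of induction the problematic case never arises (the other summand $\Psi_{T'}^{(k)}$, same $k$ and smaller $T$, is covered by the induction on $t$). To repair your write-up, replace ``pick $s_i\in\Coxeter(k)\setminus T$'' by ``pick $s\in T$'' and run the induction on $\card(T)$ as above; your length bookkeeping then goes through verbatim, and the parabolic/Littlewood--Richardson telescoping sketch in your second paragraph becomes unnecessary.
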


\begin{remark}
\label{rem:Solomon}
Note that the bound in Proposition~\ref{prop:Solomon} above 
is the best possible (cf. Example~\ref{eg:Solomon-to-Specht}).
\end{remark}

\begin{proof}[Proof of Proposition~\ref{prop:Solomon}]
We first prove that 
\begin{equation}
\label{eqn:prop:Solomon:0}
\mult_{\mathbb{S}^{\lambda}}(\Psi^{(k)}_{T}) \neq  0 \Rightarrow 
\length(\lambda) \leq \card(T)+1.
\end{equation}
The proof is by a double induction on $k$, and on $t=\card(T)$.
Clearly, \eqref{eqn:prop:Solomon:0}  holds for $k=1$ and for all $T$. Also,
if  $T = \emptyset$ (i.e.  $t=0$)
\[
\Psi^{(k)}_{\emptyset} \cong \mathbb{S}^{(k)},
\]
and \eqref{eqn:prop:Solomon:0}  holds for all $k \geq 1$.

Now suppose that the proposition is true for all $k' < k$, and for given $k$ for all  $t' < t$ and suppose that $t >0$. 

Observe that for $\mu \vdash q$, using the fact that $q < k$ and the induction hypothesis we get that
\begin{equation}
\label{eqn:proof:prop:Solomon:4}
\mult_{\mathbb{S}^\mu} (\Psi^{(q)}_{Q'}) \neq 0  \Rightarrow
\length(\mu) \leq \card(Q') +1 = (\card(T)-1) +1 = \card(T). 
\end{equation}

Using Part~\eqref{itemlabel:lem:Pieri:a} of 
Lemma~\ref{lem:Pieri}
for any $\mu \vdash q$, 
\begin{equation}
\label{eqn:proof:prop:Solomon:4.4}
\ind_{\mathfrak{S}_{q} \times \mathfrak{S}_{k-q}}^{\mathfrak{S}_k}\left(\mathbb{S}^\mu \boxtimes 1_{\mathfrak{S}_{k-q}}\right) \cong 
\bigoplus_{\lambda \in S(\mu,k)} \mathbb{S}^\lambda \mbox{ (cf. Notation~\ref{not:Pieri})}.
\end{equation}
Also by  Part~\eqref{itemlabel:lem:Pieri:b} of 
Lemma~\ref{lem:Pieri}  
\begin{equation}
\label{eqn:proof:prop:Solomon:4.5}
\lambda \in S(\mu,k) \Rightarrow \length(\lambda) \leq  \length(\mu) +1.
\end{equation}

It follows from \eqref{eqn:proof:prop:Solomon:4},  
 \eqref{eqn:proof:prop:Solomon:4.4}
and  \eqref{eqn:proof:prop:Solomon:4.5}, that  
for $\lambda \vdash k$, 
\begin{equation}
\label{eqn:proof:prop:Solomon:5}
\mult_{\mathbb{S}^\lambda} \left(\ind_{\mathfrak{S}_q \times \mathfrak{S}_{k-q}}^{\mathfrak{S}_k}  
\left( \Psi^{(q)}_{Q'} \boxtimes 1_{\mathfrak{S}_{k-q}}\right)\right)
 \neq 0 \Rightarrow \length(\lambda) \leq \card(T) +1.
\end{equation}

The claim in \eqref{eqn:prop:Solomon:0} now follows from \eqref{eqn:proof:prop:Solomon:5}, 
Proposition~\ref{prop:Solomon-to-Specht} and Schur's Lemma (Lemma~\ref{lem:Schur} in the Appendix). 
This finishes the inductive proof of \eqref{eqn:prop:Solomon:0}. \\

We now prove 
\begin{equation}
\label{eqn:prop:Solomon:4}
\mult_{\mathbb{S}^{\lambda}}(\Psi^{(k)}_{T}) \neq  0 \Rightarrow 
\length(^{t}\lambda) \leq k-\card(T).
\end{equation}

First observe that using \eqref{eqn:transpose}
\begin{eqnarray*}
\mathbb{S}^{^{t}\lambda} &\cong& \mathbb{S}^\lambda \otimes \mathbb{S}^{1^k}, \\
\Psi^{(k)}_{\Coxeter(k)-T} &\cong & \Psi^{(k)}_{T}  \otimes \mathbb{S}^{1^k}.
\end{eqnarray*}

It follows that
\begin{eqnarray*}
\mult_{\mathbb{S}^{\lambda}}(\Psi^{(k)}_{T}) \neq  0 & \Leftrightarrow&
\mult_{\mathbb{S}^{^t\lambda}}(\Psi^{(k)}_{\Coxeter(k)-T}) \neq 0 \\
&\Rightarrow&
\length(^{t}\lambda) \leq  \card(\Coxeter(k)-T)+ 1 \mbox{ using \eqref{eqn:prop:Solomon:0}} \\
&\Rightarrow&
\length(^{t}\lambda) \leq  k - \card(T). \\
\end{eqnarray*}
\end{proof}

We now introduce a geometric construction (that of a mirrored space) which will play 
an important role later.

\subsection{Mirrored spaces and Weyl chambers}
\label{subsec:mirror}
We first recall a definition  from \cite{Davis-book}.

\begin{definition}[Mirrored space]
\label{def:mirrored-space}
Given a Coxeter pair  $(\Ww,\Ss)$  (i.e. $\Ww$ is a Coxeter group and $\Ss$ a set of reflections generating $\Ww$)
a space $Z$ with a family of closed 
subspaces $(Z_s)_{s\in \Ss}$ 
is called a \emph{mirror structure} on $Z$ \cite[Chapter 5.1]{Davis-book},
and $Z$ along with the collection $(Z_s)_{s \in \Ss}$ is called a \emph{mirrored space}  over $\Ss$.
\end{definition}

Given a mirrored space  $Z, (Z_s)_{s \in \Ss}$ over $\Ss$,
there is a classical construction (called `The Basic Construction'  in \cite[Chapter 5]{Davis-book})
of a space $\mathcal{U}(\Ww,Z)$ with a $\Ww$-action 
which we define as follows.

\begin{definition}[The Basic Construction \cite{Koszul, Tits, Vinberg, Davis}]
\label{def:U}
We define
\begin{equation}
\label{eqn:U}
\mathcal{U}(\Ww,Z) = \Ww \times Z/\sim
\end{equation}
where the topology on $\Ww \times Z$ is the product topology, with $\Ww$ given the discrete topology,
and the equivalence relation $\sim$ is defined by  
\[
(w_1,\x) \sim (w_2,\y) \Leftrightarrow \x= \y \mbox { and } w_1^{-1}w_2 \in \Ww^{\Ss(\x)},
\]    
with 
\[
\Ss(\x) = \{s \in \Ss \; \mid \; \x \in Z_s\},
\]  and $\Ww^{\Ss(\x)}$ the subgroup of $\Ww$ generated by $\Ss(\x)$. 

The group $\Ww$ acts on $\mathcal{U}(\Ww,Z)$ by $w_1 \cdot [(w_2,\z)] =  [(w_1w_2,\z)]$ (where
$[(w,\z)]$ denotes the equivalence class of $(w,\z) \in \Ww \times Z$ under the relation $\sim$).
\end{definition}

For a  mirrored space $Z$ over $\Ss$, the cohomology groups, $\HH^*(\mathcal{U}(\Ww,Z))$,  gets a structure of
a $\Ww$-module from the $\Ww$-action on $\mathcal{U}(\Ww,Z)$, and $\HH^*(\mathcal{U}(\Ww,Z))$.
The cohomology groups of $\mathcal{U}(\Ww,Z)$ are studied in \cite{Davis-book} in the case
where $Z$ is a finite CW-complex, however in this paper we are concerned with mirrored spaces which are semi-algebraic.

\subsubsection{Semi-algebraic mirrored spaces}

\begin{definition}
\label{def:mirrored-space-sa}
We will call a mirrored space $Z, (Z_s)_{s \in \Ss}$ over $\Ss$, to be a semi-algebraic mirrored space
over $\Ss$, if $Z$ and each $Z_s, s \in \Ss$ are semi-algebraic sets.
\end{definition}

\begin{remark}
\label{rem:semi-algebraic}
First observe that for a finite group $\Ww$, and a semi-algebraic set $Z$, $\Ww \times Z$ is again a semi-algebraic set. Moreover, if $Z$ is closed and bounded, so is $\Ww \times Z$, and the quotient
$\mathcal{U}(\Ww,Z)$ is also semi-algebraic, since  the quotient of a semi-algebraic set by a proper 
semi-algebraic equivalence relation is semi-algebraic (\cite[page 166]{Dries}). 

Note also that  every closed and bounded semi-algebraic set is semi-algebraically homeomorphic
to the  geometric realization over $\R$ of a finite simplicial complex  (see for example 
\cite[Chapter 5]{BPRbook2}).
More generally, if $Z, (Z_s)_{s \in \Ss}$ is a semi-algebraic mirrored space, with $Z, Z_s, s\in \Ss$ closed and bounded,
then there exists a finite simplicial complex $K$ and subcomplexes $K_s \subset K, s\in \Ss$, and a semi-algebraic
homeomorphism $h:Z \rightarrow |K|$, which restricts to  homeomorphisms $Z_s \rightarrow |K_s|, s \in \Ss$.  

Moreover, for any subset $T \subset \Ss$,  the cohomology groups of $Z$ (resp. pairs $(Z,\bigcup_{s \in T} Z_s)$) are isomorphic to the simplicial cohomology groups of the 
simplicial complex $K$ (resp. pairs $(K,\bigcup_{s \in T} K_s)$) (see \cite[Chapter 6]{BPRbook2}).
\end{remark}

In view of Remark~\ref{rem:semi-algebraic} the following theorem stated in \cite{Davis-book} 
for finite CW-complexes remain true for semi-algebraic mirrored space $(Z,(Z_s)_{s\in \Ss})$ with $Z,Z_s, s \in \Ss$  closed and bounded.  We state the theorem in the special case where
$(\Ww,\Ss) = (\mathfrak{S}_k,\Coxeter(k))$ which is the only case of interest to us in this paper.

\begin{theorem}\cite[Theorem 15.4.3]{Davis-book}
\label{thm:Davis0}
Let $(\Ww,\Ss) = (\mathfrak{S}_k,\Coxeter(k))$, and $Z,Z_s, s \in \Ss$ a semi-algebraic mirrored space over $\Ss$,  and  $Z,Z_s, s\in \Ss$ closed and bounded.
Then,
\[
\HH_*(\mathcal{U}(\Ww,Z)) \cong_{\mathfrak{S}_k}  \bigoplus_{T \subset \Ss} \HH_*(Z, Z^T) \otimes  \Psi^{(k)}_{T},
\]
where for each $T \subset \Ss$,
\[ 
Z^T = \bigcup_{s \in T} Z_s.
\]
\end{theorem}

\subsubsection{Weyl chambers}
The semi-algebraic mirrored spaces that we will be interested in are of a special type. In order to introduce them we first need a few more definitions.

\begin{notation}
\label{not:Weyl0}
We denote by $\Weyl^{(k)} \subset \R^k$  the cone defined by $X_1 \leq X_2 \leq \cdots \leq X_k$, and 
by $\Weyl^{(k),o}$ the interior of $\Weyl^{(k)}$ (i.e. the cone defined by $X_1 < X_2 < \cdots < X_k$).
\end{notation}

\begin{notation}
\label{not:composition}
For $k \in \Z_{\geq 0}$, we denote by $\Comp(k)$ the set of integer tuples 
\[
\lambda= (\lambda_1,\ldots,\lambda_\ell), \lambda_i > 0, |\lambda| := \sum_{i=1}^{\ell} \lambda_i = k.
\] 
\end{notation}

\begin{definition}
\label{def:composition-order}
For $k \in \Z_{\geq 0}$, and $\lambda = (\lambda_1,\ldots,\lambda_\ell) \in \Comp(k)$,
we denote by $\Weyl_{\lambda}$ the subset of $\Weyl^{(k)}$ defined by,
\[
X_1 = \cdots = X_{\lambda_1} \leq X_{\lambda_1+1} = \cdots = X_{\lambda_1+\lambda_2} \leq \cdots \leq X_{\lambda_1+\cdots+\lambda_{\ell-1}+1} = \cdots = X_k,
\]
and 
denote by $\Weyl_{\lambda}^o$ the subset of $\Weyl^{(k)}$ defined by
\[
X_1 = \cdots = X_{\lambda_1} < X_{\lambda_1+1} = \cdots = X_{\lambda_1+\lambda_2} < \cdots < X_{\lambda_1+\cdots+\lambda_{\ell-1}+1} = \cdots = X_k.
\]

We denote by $L_\lambda$ the subspace defined by 
\[
X_1 = \cdots = X_{\lambda_1},  X_{\lambda_1+1} = \cdots = X_{\lambda_1+\lambda_2}, \cdots, X_{\lambda_1+\cdots+\lambda_{\ell-1}+1} = \cdots = X_k,
\]
which is the linear hull of $\Weyl_\lambda$.
\end{definition}

\begin{notation}
\label{not:T-to-Comp}
For $s = (i,i+1) \in \Coxeter(k)$, we denote by $\Weyl^{(k)}_s$ the face of $\Weyl^{(k)}$ defined by 
$X_{i} = X_{i+1}$. More generally, for $T \subset \Coxeter(k)$, we denote:
\begin{eqnarray*}
\Weyl^{(k)}_T &=& \bigcap_{s \in T} \Weyl^{(k)}_s, \\ 
\Weyl^{(k,T)} &=& \bigcup_{s \in T} \Weyl^{(k)}_s.
\end{eqnarray*}

We also define $\lambda(T) \in \Comp(k)$ implicitly by the equation
\begin{equation}
\label{eqn:implicit-lambda}
\Weyl_{\lambda(T)} = \Weyl^{(k)}_T.
\end{equation}
\end{notation}

\begin{notation}
Finally, for any semi-algebraic set $Z \subset \Weyl^{(k)}$, $T \subset \Coxeter(k)$, we set
\label{not:Z-k-T}
\begin{eqnarray*}
Z^T &=& Z \cap \Weyl^{(k,T)}, \\
Z_{T} &=& Z \cap \Weyl^{(k)}_T .
\end{eqnarray*}

For any semi-algebraic subset $S \subset \R^k$, we will denote
\begin{eqnarray*}
S_{k} &=& S \cap \Weyl^{(k)},
\end{eqnarray*}
and we will  for convenience of notation write $S_{k,T}$ (respectively, $S_{k}^T$), in place of 
$(S_k)_T$ (respectively, $(S_k)^T$).
\end{notation}

Now suppose that $S$ is a closed and bounded \emph{symmetric}  semi-algebraic subset of $\R^k$, then (using Notation~\ref{not:Z-k-T}) $S_k \subset \Weyl^{(k)}$. Then,
$S_k$ along with the  tuple of  closed semi-algebraic subsets $(S_{k,s} = S_k \cap \Weyl^{(k)}_s)_{s \in \Coxeter(k)}$ 
 (cf. Notation~\ref{not:T-to-Comp})
is a semi-algebraic mirrored  space over $\Coxeter(k)$.

It follows immediately from Definition~\ref{def:U} that
\begin{proposition}
The semi-algebraic set $\mathcal{U}(\mathfrak{S}_k,S_k)$ is semi-algebraically homeomorphic to 
$S$.
\end{proposition}
\label{prop:U}
\begin{proof}
It is a simple exercise to verify that the map
\[
[(w,\x)]  \mapsto w\cdot \x
\]
is a semi-algebraic homeomorphism $\mathcal{U}(\mathfrak{S}_k,S_k) \rightarrow S$.
\end{proof}

Proposition~\ref{prop:U} in conjunction with Theorem~\ref{thm:Davis0} yields the following result
that we will use later in the paper. This is the only result from this subsection that we will need in the rest
of the paper.
  
\begin{theorem}
\label{thm:Davis}
Let $S$ be a closed and bounded symmetric semi-algebraic subset of $\R^k$. Then,
\[
\HH_*(S) \cong_{\mathfrak{S}_k}  \bigoplus_{T \subset \Coxeter(k)} \HH_*(S_{k}, S_{k}^T) \otimes  \Psi^{(k)}_{T}.
\]
\end{theorem}
\qed

\section{Outline of our method and two important examples}
\label{sec:outline+example}
\subsection{Outline of the proofs of Theorems~\ref{thm:main1} and \ref{thm:vandermonde}}
\label{subsec:outline}
We first observe that symmetric semi-algebraic subsets $S \subset \R^k$, defined in terms of equalities and inequalities
of symmetric polynomials of degree at most $d$, admits a map to $\R^d$  (by the first $d$ Newton power sum polynomials restricted to $S$), whose fibers are Vandermonde varieties. Moreover the action of $\mathfrak{S}_k$ keeps the fibers stable, and thus the action of $\mathfrak{S}_k$ on $S$ also induces an action on the Leray spectral sequence of this
map. As a result in order to prove the vanishing of certain irreducible $\mathfrak{S}_k$-modules, it suffices to prove
this vanishing for Vandermonde varieties. 
The Vandermonde  varieties are well studied and have nice topological and geometric properties. For us the most important property implicit in the work of Arnold, Giventhal and Kostov is that the intersection $Z$ of a Vandermonde variety $V$
with a Weyl chamber $\Weyl^{(k)}$ in $\R^k$ is either a point or a regular cell of the dimension of the variety. 
Moreover, the structure of the boundary of $Z$ (in case $Z$ is a regular cell) is well understood in terms of the 
combinatorics of the faces of $\Weyl^{(k)}$ with which  $Z$ has a non-empty intersection.  \\

Applying 
Theorem~\ref{thm:Davis}
to our situation we obtain that 
the cohomology groups of $V$ 
are isomorphic to
direct sums of tensor products of 
the Solomon modules
$\Psi_T^{(k)}$, 
indexed by subsets $T \subset \Coxeter(k)$,
and the cohomology groups
of the pairs $(Z,Z^T), T \subset \Coxeter(k)$, where as before 
\[ 
Z^T = \bigcup_{s \in T} Z_s.
\]

Recall now that by Proposition \ref{prop:Solomon}
only those Specht modules can appear in $\Psi_T^{(k)}$ whose number of rows is bounded by $\card(T)+1$ (and a similar restriction in terms of the number of columns). \\

One final ingredient is the observation that in the case when $Z$ has the expected dimension $k-d$, then the intersection
of $Z$ with the various faces of $\Weyl^{(k)}$, induces a structure of a regular cell complex, and the boundary of $Z$
is then semi-algebraically homeomorphic to the $(k-d-1)$-dimensional sphere, and the intersection of $Z$ with the various
$\Weyl^{(k)}_s, s\in \Coxeter(k)$, gives an acyclic covering  of the boundary of $Z$ having 
cardinality at most $k-1$.
This implies via an argument using the nerve lemma and Alexander duality that the cohomology groups $\HH^i(Z,Z^T)$ must vanish if $i$ is large compared to the cardinality of $T$ 
and also a dual statement (cf. Proposition~\ref{prop:cell-complex}).\\

Putting these together we obtain our theorem on the vanishing of certain
multiplicities for Vandermonde varieties (cf. Theorem \ref{thm:vandermonde}). Theorem \ref{thm:main1} is then a consequence of Theorem \ref{thm:vandermonde} and an argument involving (an equivariant version of) the 
Leray spectral sequence. \\

 Finally, the restriction result that we prove also allows us, via the Solomon-Davis formula alluded to above, and 
 some additional ingredients (see the outline in Section~\ref{subsec:alg:outline}) including certain
 standard algorithms from semi-algebraic geometry, to effectively compute the Betti numbers $b_i(S), 0 \leq i \leq \ell$,
 for any fixed $\ell$ with complexity which is polynomial in the number of variables and the number of polynomials.
 Here we are assuming that the degrees of the input polynomials are also bounded by a constant. \\

We will now proceed to describe two important
examples, whose analysis already exposes the central ideas behind the proofs of the main theorems. \\

We first introduce some more notation.
\begin{notation}
\label{not:Weyl}
For every $m \geq 0$, and $\w = (w_1,\ldots,w_k) \in \R_{> 0}^k$ we denote
$$
 \begin{array}{rlccc}
 p_{\w,m}^{(k)} &:\,& \R^k & \,\,\longrightarrow\,\, & \R \\[0.5ex]
     & & \x = (x_1,\ldots,x_k) & \longmapsto & \sum_{j=1}^{k} w_j x_j^m, %
\end{array}%
 $$
and for every $ d\geq 0 $, and $\w \in \R_{> 0}^k$ we denote by 
$\Phi_{\w,d}^{(k)}$ 
the continuous map defined by

$$
 \begin{array}{rlccc}%
 \Phi_{\w,d}^{(k)} &:\,& \R^k & \,\,\longrightarrow\,\, & \R^{d'} \\[0.5ex]
     & & \x = (x_1,\ldots,x_k) & \longmapsto & (p_{\w,1}^{(k)}(\x),\ldots,p_{\w,d'}^{(k)}(\x)), %
\end{array}%
 $$
 where $d' = \min(k,d)$. 

Finally, we denote by $$\Psi_{\w,d}^{(k)}:\Weyl^{(k)} \longrightarrow \R^{d'}$$ the restriction of $\Phi_{\w,d}^{(k)}$ to
$\Weyl^{(k)}$. \\

If $\w = 1^k :=  (1,\ldots,1)$, then we will denote by $p_m^{(k)}$ the polynomial 
$p_{\w,m}^{(k)}$  (the $m$-th Newton sum polynomial), and by $\Phi^{(k)}_d$ (respectively, $\Psi^{(k)}_d$) the map 
$\Phi^{(k)}_{\w,d}$ (respectively, $\Psi^{(k)}_{\w,d}$). \\

For every $\w \in \R_{\geq 0}^k$,  $d,k \geq 0, d \leq k$, and $\y \in \R^d$, we will denote by 
$$V_{\w,d,\y}^{(k)} := (\Phi^{(k)}_{\w,d})^{-1}(\y),\text{ and } Z_{\w,d,\y}^{(k)} := (\Psi^{(k)}_{\w,d})^{-1}(\y).$$ \\

If $\w = 1^k :=  (1,\ldots,1)$, then we just denote $V_{\w,d,\y}^{(k)}$ by $V_{d,\y}^{(k)}$, and 
$Z_{\w,d,\y}^{(k)}$ by $Z_{d,\y}^{(k)}$.
\end{notation}

We are now ready to discuss the promised examples.

\subsection{Examples}
\label{sec:eg}
\subsubsection{Example with $d=2$ and $k \geq 3$.}
\label{eg:V-k-2}
We first consider the case $d=2$ for $k \geq 3$, which has already being alluded to in 
Remark~\ref{rem:d=1}. 
Recall that in this case,
the Vandermonde variety $V^{(k)}_{2,\y}$  is defined by the equation 
\[
\sum_{i=1}^k X_i = y_1, \sum_{i=1}^{k} X_i^2 = y_2,
\] 
and is empty, a point,  or a semi-algebraically homeomorphic to a sphere of dimension $k-2$
(depending on 
whether
$y_1^2 - k y_2$ is $> 0, = 0$, or $< 0$, respectively).

The first two cases are trivial.
In the last case,
$Z_{2,\y}^{(k)} = V_{2,\y}^{(k)} \cap \Weyl^{(k)}$ is a closed disk of dimension $k-2$,
and has a non-empty intersection with all the faces of the Weyl chamber $\Weyl^{(k)}$. 
(See Figure~\ref{fig:image0} for the case $k=4$, where  $Z_{2,\y}^{(4)}$ is one of the triangles
on the two-dimensional sphere equal to $V_{2,\y}^{4)}$. Notice that in this case $Z_{2,\y}^{(4)}$ meets all the three 
faces of the Weyl chamber $\Weyl^{(4)}$.) \\

It follows that in this case
\begin{eqnarray}
\label{eqn:d=2:1}
\HH^i(Z_{2,\y}^{(k)},Z_{2,\y}^{(k,T)}) &\cong& \Q \mbox{ if $(i,T) = (0,\emptyset)$  or $(k-2,\Coxeter(k))$},\\
\nonumber
&=& 0 \mbox{ otherwise}.
\end{eqnarray}

The $\mathfrak{S}_k$-module structure of $V_{2,\y}^{(k)}, y_1^2 - k y_2 < 0, k\geq 3$
stated in \eqref{eqn:d=2} in Remark~\ref{rem:d=1} now follows from 
\eqref{eqn:d=2:1}, 
\eqref{eqn:eg:1}, \eqref{eqn:eg:2},and 
Theorem \ref{thm:Davis}.

\subsubsection{Example of  $V_{3,\y}^{(4)} \subset \R^4$}
\label{eg:V-4-3}
We now study the cohomology of  the symmetric Vandermonde varieties (curves)  $V_{3,\y}^{(4)} \subset \R^4$, as
$\mathfrak{S}_4$-modules, 
for various $\y = (y_1,y_2,y_3) \in \R^3$. \\

In this case the Weyl chamber $\Weyl^{(4)} \subset \R^4$ has three faces corresponding to the
compositions $(2,1,1)$, $(1,2,1)$ and $(1,1,2)$. In terms of the Coxeter elements $s_1= (1,2)$, $s_2=(2,3)$, and 
$s_3 = (3,4)$, these faces correspond to $s_1$, $s_2$, and $s_3$ respectively. 
In other words, using the notation introduced in  \eqref{eqn:implicit-lambda},
\begin{eqnarray*}
\lambda(\{s_1\}) &=& (2,1,1), \\
\lambda(\{s_2\}) &=& (1,2,1), \\
\lambda(\{s_3\}) &=& (1,1,2).
\end{eqnarray*} 
Also, note that
\begin{eqnarray*}
\lambda(\{s_1,s_2\}) &=& (3,1), \\
\lambda(\{s_1,s_3\}) &=& (2,2), \\
\lambda(\{s_2,s_3\}) &=& (1,3).
\end{eqnarray*}

We first need a preliminary calculation.  
Observe that
\begin{eqnarray*}
\Ind_{\mathfrak{S}_3}^{\mathfrak{S}_4} \Psi_{\emptyset}^{(3)} &\cong_{\mathfrak{S}_4}& \mathbb{S}^{(4)} \oplus \mathbb{S}^{(3,1)}\\
&\cong_{\mathfrak{S}_4}& \Psi_{\emptyset}^{(4)} \oplus \Psi_{\{s_1\}}^{(4)} \mbox{ (using Proposition \ref{prop:Solomon})}.
\end{eqnarray*}

From this we deduce that
\begin{eqnarray}
\label{eqn:eg:3}
\Psi_{\{s_1\}}^{(4)} &\cong_{\mathfrak{S}_4}&   \mathbb{S}^{(3,1)},
\end{eqnarray}
and using \eqref{eqn:transpose} that,
\begin{eqnarray}
\label{eqn:eg:4}
\Psi_{\Coxeter(4) - \{s_1\}}^{(4)} &\cong_{\mathfrak{S}_4}&  \mathbb{S}^{(2,1,1)}.
\end{eqnarray}

Returning to the study of topology of the curve $V_{3,\y}^{(4)}$,
there are five different cases possible depending on the configuration of the curve $V_{3,\y}^{(4)}$ inside $\Weyl^{(4)}$.
Recall (cf. Notation \ref{not:Weyl})  that we denote $Z_{3,\y}^{(k)} = V_{3,\y}^{(4)} \cap \Weyl^{(4)}$.

\begin{enumerate}[{Case} 1.]
\item
\label{itemlabel:eg:key:outer:1}
The Vandermonde variety $V^{(4)}_{2,(y_1,y_2)}$ is empty: in this case
$Z_{3,\y}^{(4)} = \emptyset$,  and $\HH^0(V_{3,\y}^{(4)}) = \HH^0(V_{3,\y}^{(4)}) = 0$.

\item
\label{itemlabel:eg:key:outer:2}
The Vandermonde variety $V^{(4)}_{2,(y_1,y_2)}$ is singular and  $V^{(4)}_{3,\y}$ is non-empty:
in this case, $Z_{3,\y}^{(4)}$ is a point which must necessarily belong to the face labeled by $(4)$ of $\Weyl^{(4)}$.
Thus, $Z_{3,\y}^{(4)}$ belongs to all non-zero  faces
of $\Weyl^{(4)}$, and
$y_2$ is a minimum value of $p_2^{(4)}$ on $V^{(4)}_{1,(y_1)}$. 
(This preceding fact follows from Theorem~\ref{thm:arnold} stated later.)

In this case  (using Notation \ref{not:Z-k-T}) 
\begin{eqnarray*}
\HH^0(Z_{3,\y}^{(4)},Z_{3,\y}^{(4,T)}) &\cong& \Q,  \mbox{ if $T = \emptyset$}, \\
\HH^0(Z_{3,\y}^{(4)},Z_{3,\y}^{(4,T)}) &=& 0,   \mbox{ otherwise}.
\end{eqnarray*}
This implies that
\begin{eqnarray*}
\HH^0(V_{3,\y}^{(4)}) &\cong_{\mathfrak{S}_4}& \Psi_{\emptyset}^{(4)}  \\
&\cong_{\mathfrak{S}_4}& 1_{\mathfrak{S}_4} \mbox{ (using \eqref{eqn:eg:1})}.
\end{eqnarray*}

It follows that $b_0(V_{3,\y}^{(4)}) = 1$ (using the Eqn. \eqref{eqn:hook}).
Clearly, $\HH^1(V_{3,\y}^{(4)}) = 0$ in this case.

\item
\label{itemlabel:eg:key:outer:3}
The Vandermonde variety $V^{(4)}_{2,(y_1,y_2)}$ is non-empty and non-singular.
Lets fix $y_1,y_2$ such that  $V^{(4)}_{2,(y_1,y_2)}$ is non-empty and non-singular.
In this case, 
$V^{(4)}_{2,(y_1,y_2)}$ is a sphere which is depicted in Figure \ref{fig:image0}. 

\hide{
\begin{figure}
\includegraphics[scale=0.4]{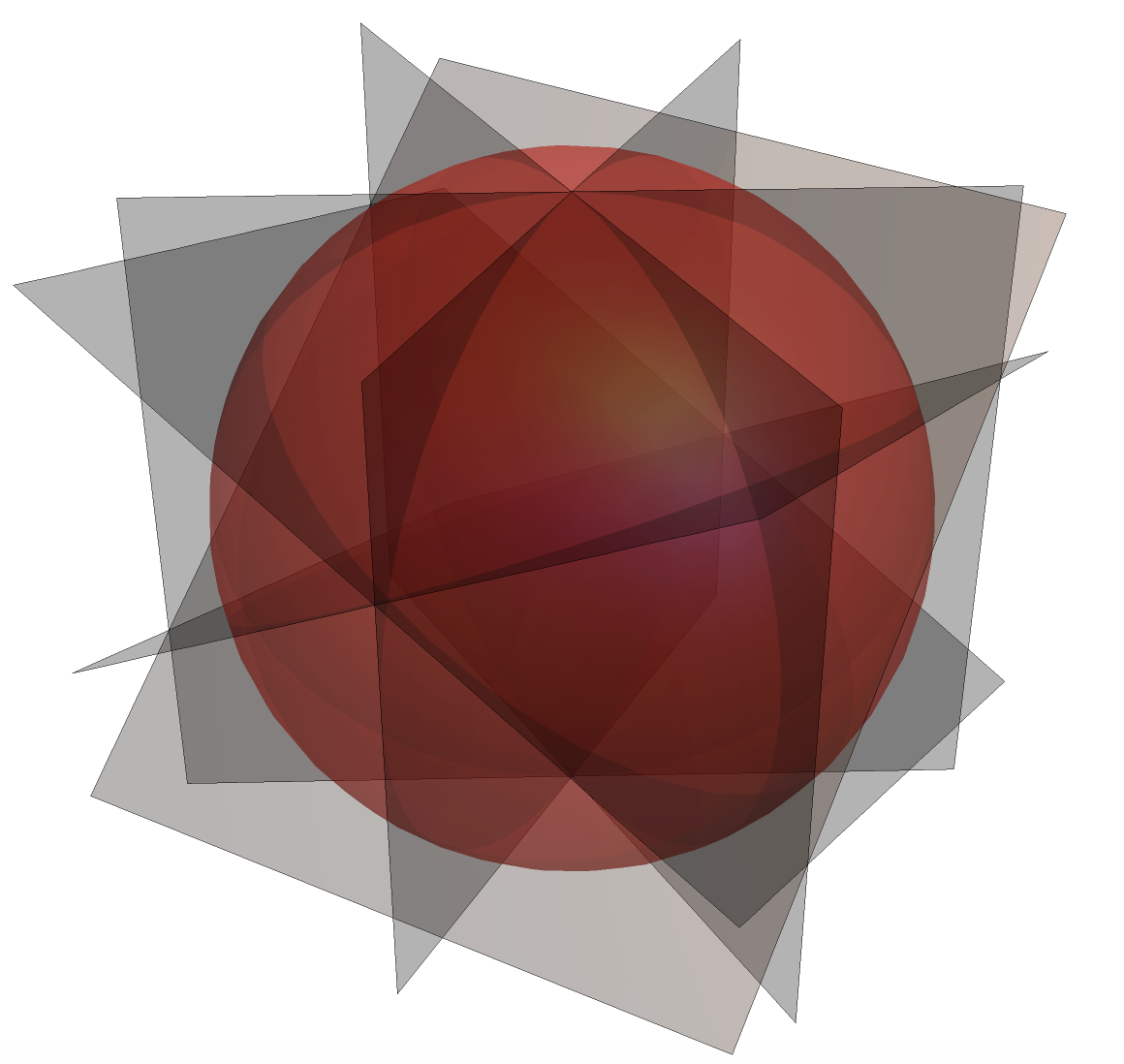}
\caption{Example of the non-singular Vandermonde variety $V_{2,(y_1,y_2)}^{(4)}$.}
\label{fig:image0}
\end{figure}
}
The hyperplanes (shown in grey)  in Figure~\ref{fig:image0} cutting out
the $4! = 24$ triangles on the sphere are the walls of the various Weyl chambers. 
Notice that there are $14$ vertices in the arrangement of great circles on the sphere, 
$8$ of them incident on $3$ circles and the remaining $6$ incident on $2$ circles.
There are several sub-cases to consider.  The (non-empty) sub-cases are depicted in
Figures~\ref{fig:image1}, \ref{fig:image2}, \ref{fig:image3} and \ref{fig:image4} 
($V^{(4)}_{3,\y}$ is shown in blue).

\begin{figure}[htb] 
\captionsetup[subfigure]{labelfont=rm}
\begin{subfigure}{.48\textwidth}
  \centering
  \includegraphics[scale=0.22]{image0.png}
\caption{Example of the non-singular  Vandermonde variety $V_{2,(y_1,y_2)}^{(4)}$.}
\label{fig:image0}
  \end{subfigure}
\begin{subfigure}{.48\textwidth}
  \centering
  \includegraphics[scale=0.22]{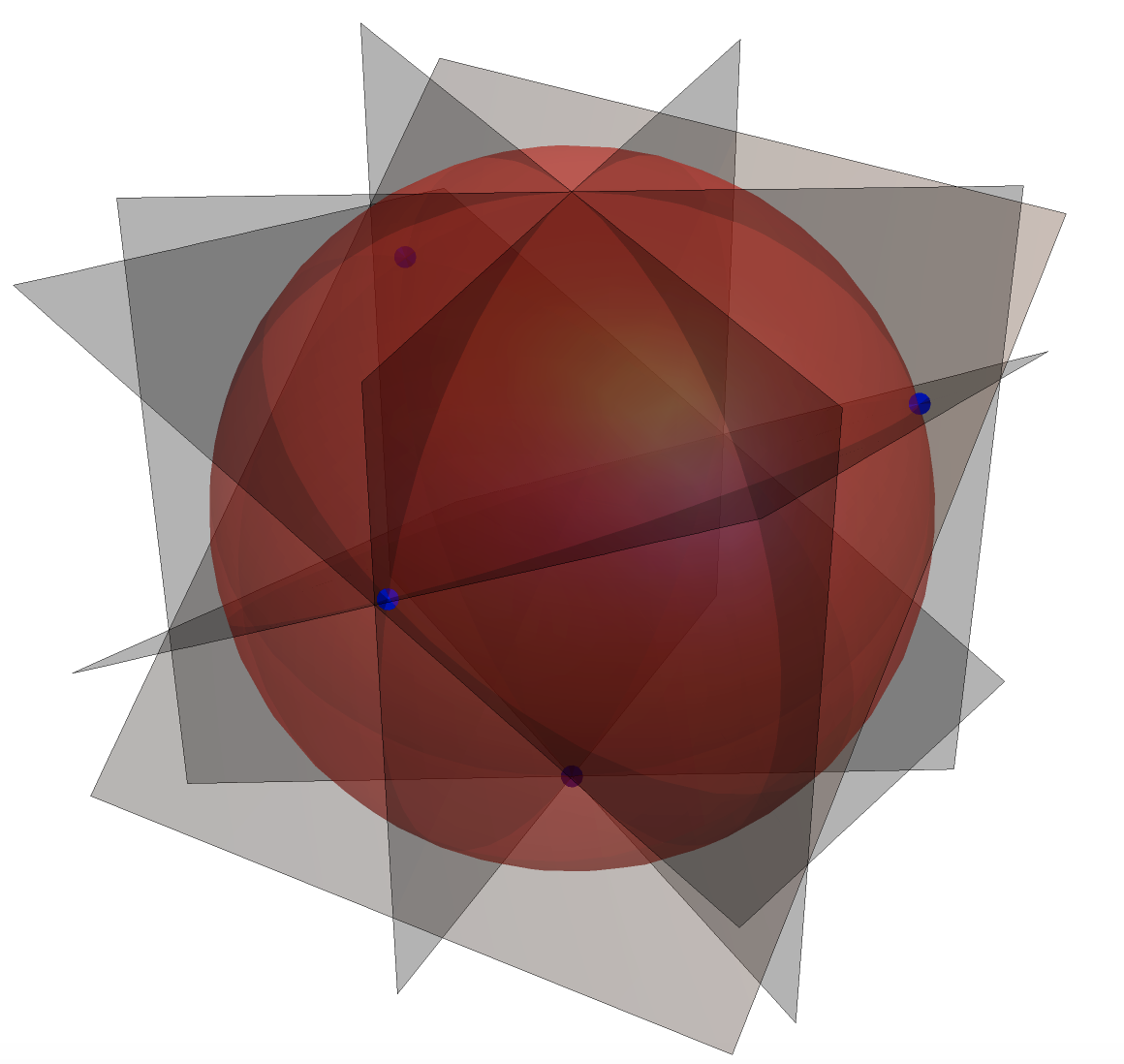}
\caption{Vandermonde variety $V^{(4)}_{3,\y}$ in Case \ref{itemlabel:eg:key:b}.}
\label{fig:image1}
\end{subfigure}
\begin{subfigure}{.48\textwidth}
   \centering
  \includegraphics[scale=0.22]{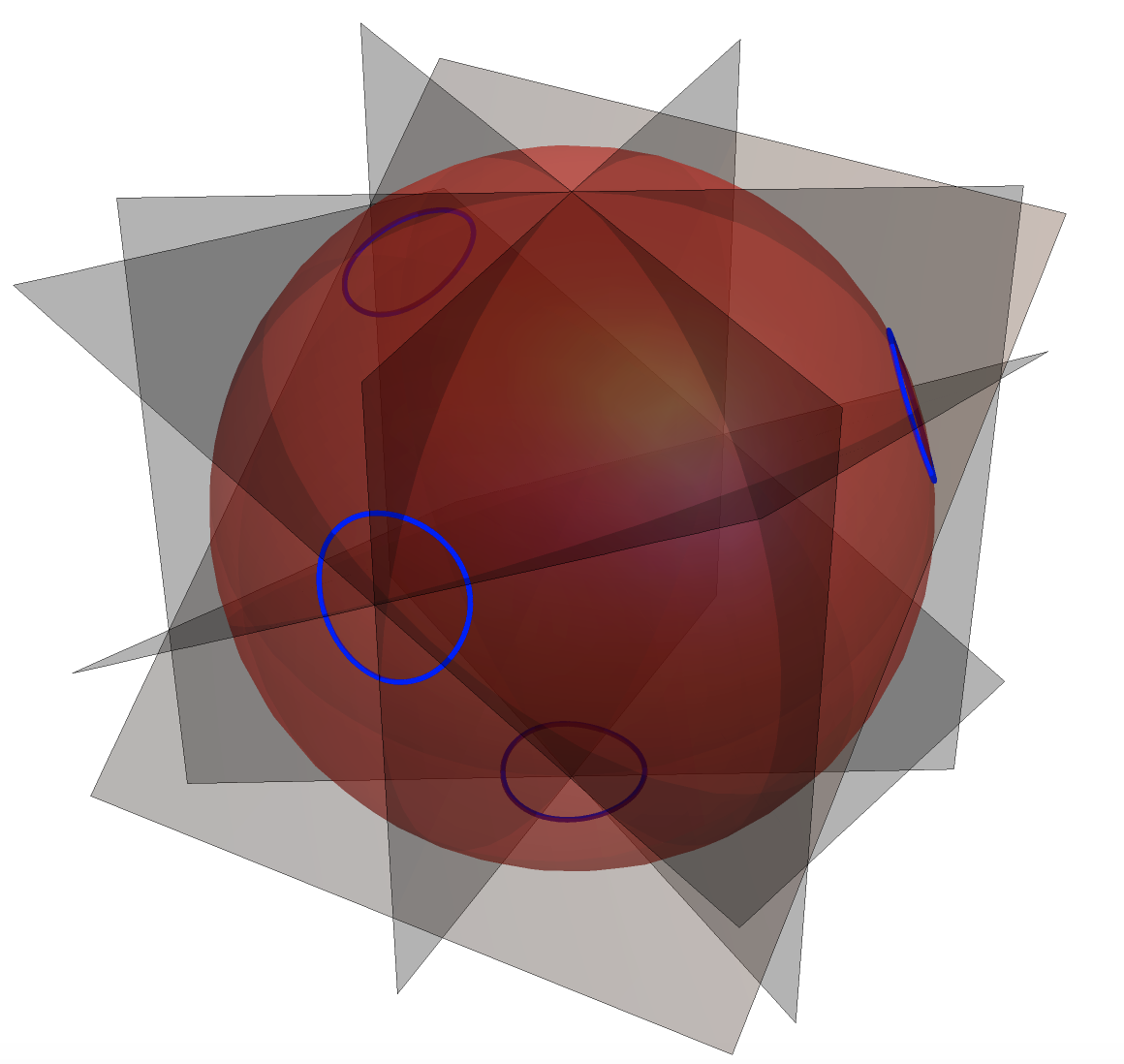}
\caption{Vandermonde variety $V^{(4)}_{3,\y}$ in Case \ref{itemlabel:eg:key:c}.}
\label{fig:image2}
  \end{subfigure}
\begin{subfigure}{.48\textwidth}
   \centering
  \includegraphics[scale=0.22]{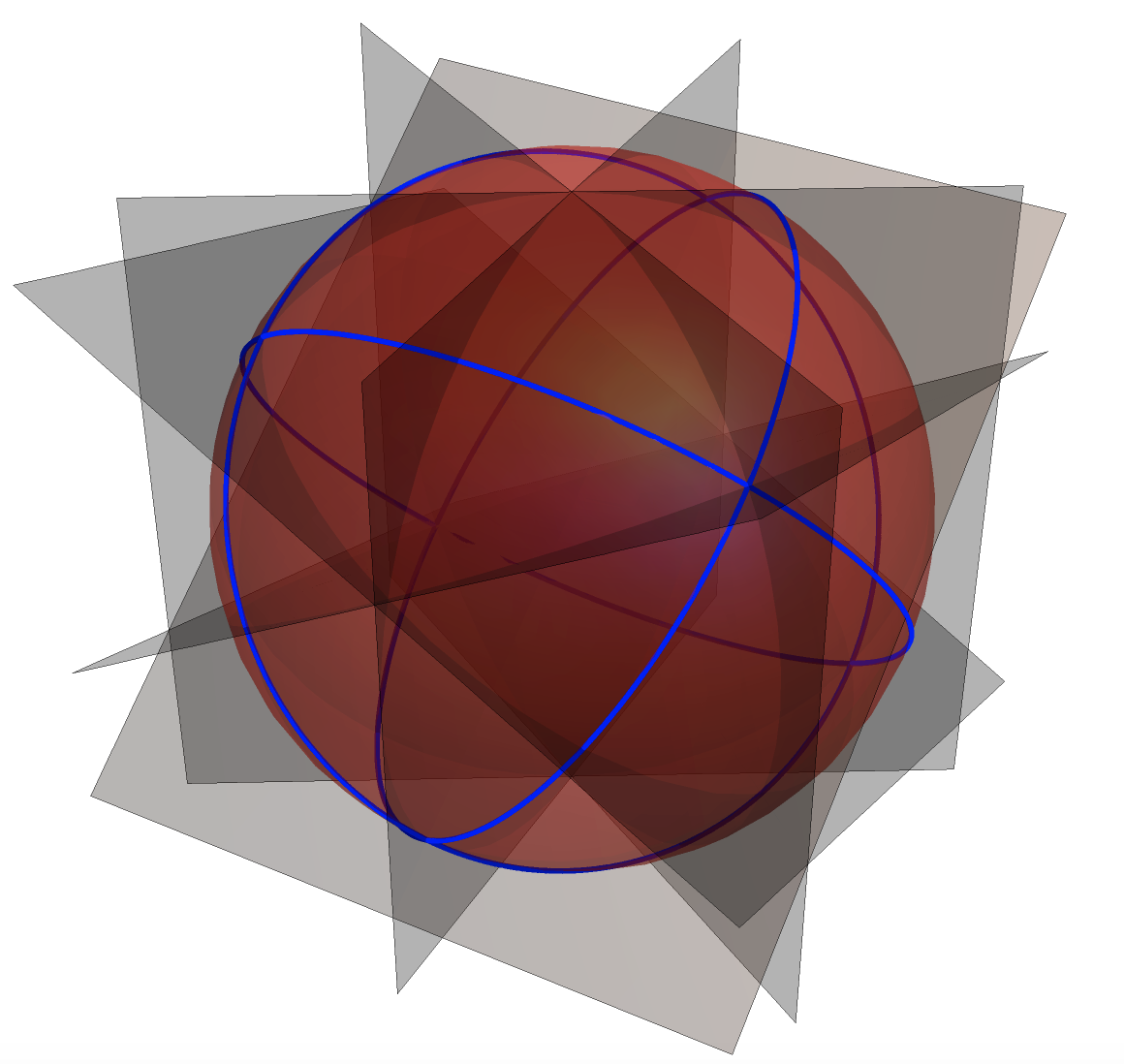}
\caption{Vandermonde variety $V^{(4)}_{3,\y}$ in Case \ref{itemlabel:eg:key:d}.}
\label{fig:image3}
 \end{subfigure}
\begin{subfigure}{.48\textwidth}
   \centering
  \includegraphics[scale=0.22]{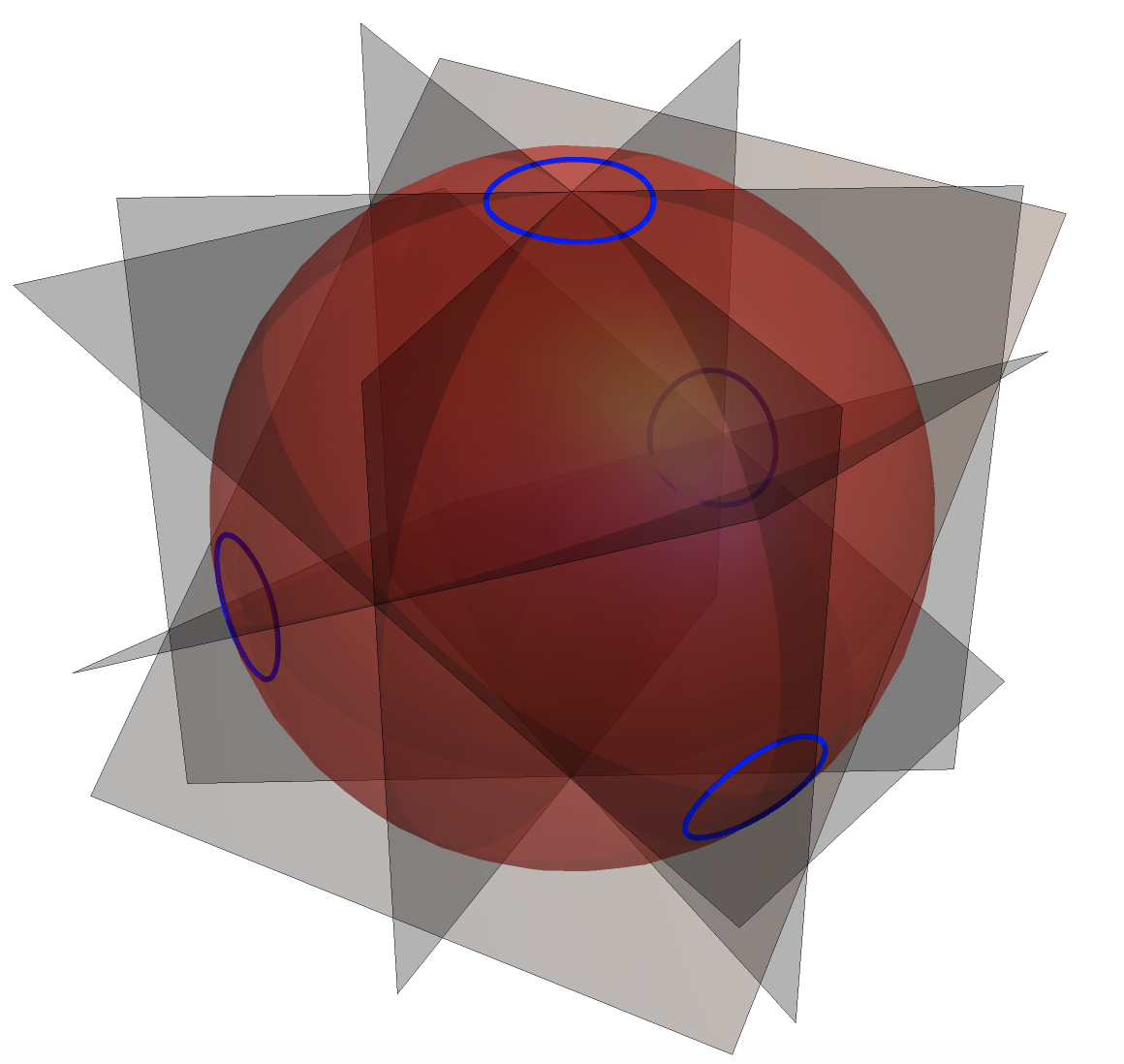}
\caption{Vandermonde variety $V^{(4)}_{3,\y}$ in Case \ref{itemlabel:eg:key:e}.}
\label{fig:image4}
\end{subfigure}
 \begin{subfigure}{.48\textwidth}
   \centering
  \includegraphics[scale=0.22]{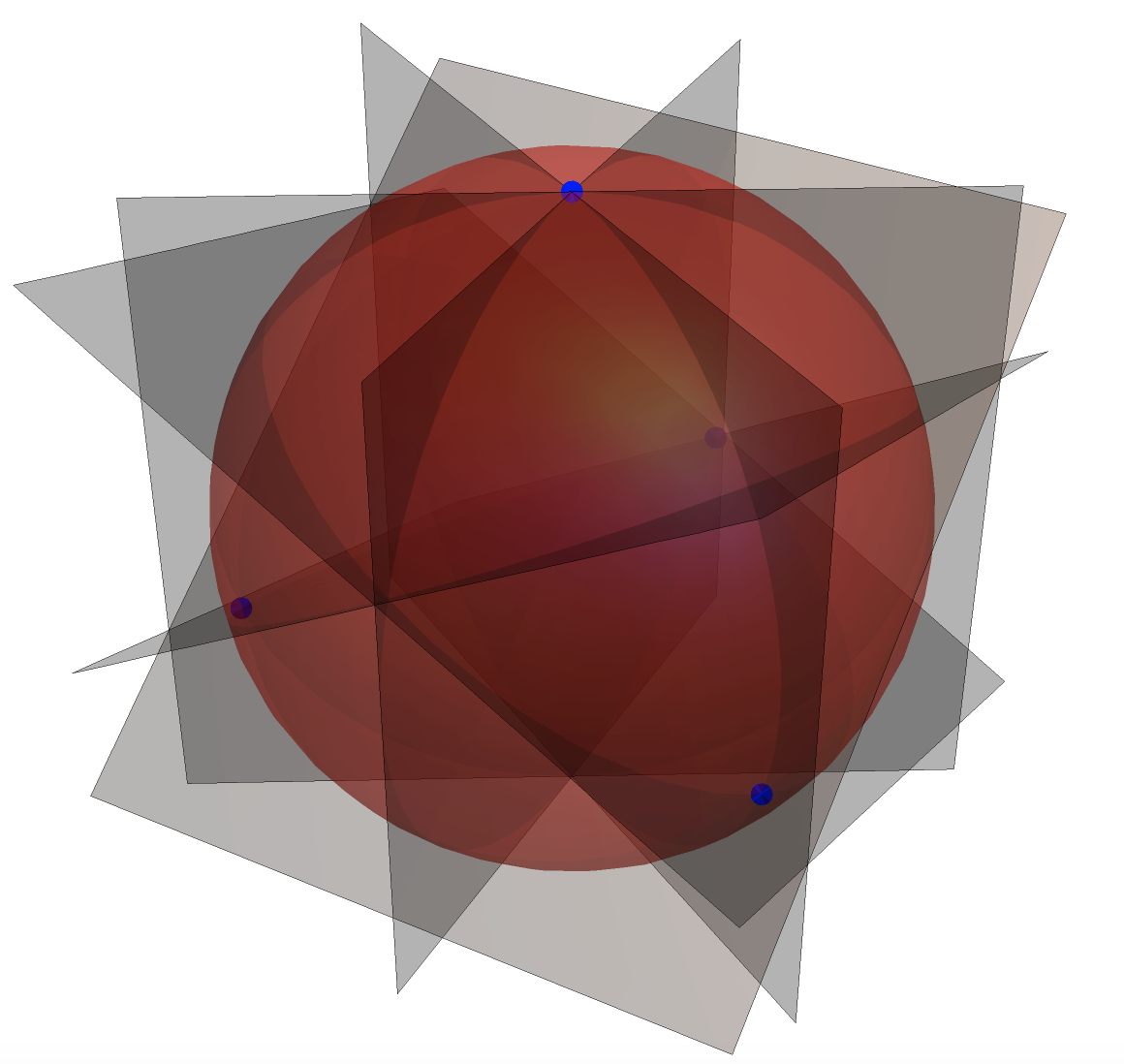}
\caption{Vandermonde variety $V^{(4)}_{3,\y}$ in Case \ref{itemlabel:eg:key:f}.}
\label{fig:image5}
\end{subfigure}
\caption{Different examples of Vandemonde varieties}
\end{figure}
\hide{
\begin{figure}
\includegraphics[scale=0.2]{image1.png}
\caption{Vandermonde variety $V^{(4)}_{3,\y}$ in Case \ref{itemlabel:eg:key:b}.}
\label{fig:image1}
\end{figure}

\begin{figure}
\includegraphics[scale=0.2]{image2.png}
\caption{Vandermonde variety $V^{(4)}_{3,\y}$ in Case \ref{itemlabel:eg:key:c}.}
\label{fig:image2}
\end{figure}

\begin{figure}
\includegraphics[scale=0.2]{image3.png}
\caption{Vandermonde variety $V^{(4)}_{3,\y}$ in Case \ref{itemlabel:eg:key:d}.}
\label{fig:image3}
\end{figure}

\begin{figure}
\includegraphics[scale=0.2]{image4.png}
\caption{Vandermonde variety $V^{(4)}_{3,\y}$ in Case \ref{itemlabel:eg:key:e}.}
\label{fig:image4}
\end{figure}

\begin{figure}
\includegraphics[scale=0.2]{image5.png}
\caption{Vandermonde variety $V^{(4)}_{3,\y}$ in Case \ref{itemlabel:eg:key:f}.}
\label{fig:image5}
\end{figure}
}
It follows from Theorem~\ref{thm:arnold} that there exist,
\[
a(y_1,y_2), b(y_1,y_2), c(y_1,y_2) \in \R,
\] 
with
\[
 a(y_1,y_2) = \min_{\x \in V^{(4)}_{2,(y_1,y_2)} }p_3^{(4)}(\x) < b(y_1,y_2) < c(y_1,y_2) =  \max_{\x \in V^{(4)}_{2,(y_1,y_2)}} p_3^{(4)}(\x), 
 \]
giving a partition of $\R$ into points and open intervals (more precisely, 
three points and four open intervals)
such that the Vandermonde variety $V^{(4)}_{3,\y}$ can be characterized topologically by which element of the partition
$y_3$ belongs to.
 
\begin{enumerate}[{\ref{itemlabel:eg:key:outer:3}}a.]
\item
\label{itemlabel:eg:key:a}
$y_3 \in (-\infty, a(y_1,y_2))$:  In this case,  $V^{(4)}_{3,\y} = \emptyset$;

\item
\label{itemlabel:eg:key:b}
$y_3= a(y_1,y_2)$: In this case,  $V^{(4)}_{3,\y}$ is non-empty and singular, and coincides with 
$4$ of the $8$ vertices of degree $6$, and 
$Z_{3,\y}^{(4)}$ is a point which must necessarily belong to  the face labeled by $(3,1)$ (cf. Theorem~\ref{thm:arnold}).
In this case 
\[
\HH^0(Z_{3,\y,4}^{(4)}, Z_{3,\y}^{(4,T)}) = 0
\] 
if 
\[
T = \{s_2\}, \{s_3\}, \{s_2,s_3\}, \{s_1,s_2,s_3\}
\] 
(since in these cases $Z_{3,\y}^{(4)} = Z_{3,\y}^{(4,T)}$),
and 
\[
\HH^0(Z_{3,\y}^{(4)},Z_{3,\y}^{(4,T)}) \cong \Q
\]
in the case 
\[
T =\emptyset, \{s_1\}.
\]
This implies that
\begin{eqnarray*}
\HH^0(V_{3,\y}^{(4)}) &\cong_{\mathfrak{S}_4}& \Psi_{\emptyset}^{(4)} \oplus \Psi_{\{s_1\}}^{(4)} \\
&\cong_{\mathfrak{S}_4}& 1_{\mathfrak{S}_4} \oplus \mathbb{S}^{(3,1)} \mbox{ (using \eqref{eqn:eg:1} and \eqref{eqn:eg:3})}.
\end{eqnarray*}

It follows that 
\[
b_0(V_{3,\y}^{(4)}) = 1+3 = 4
\] 
(using \eqref{eqn:hook} to derive $\dim_\Q ( \mathbb{S}^{(3,1)}) = 3$).
Clearly, $\HH^1(V_{3,\y}^{(4)}) = 0$ in this case.

\item
\label{itemlabel:eg:key:c}
$y_3 \in (a(y_1,y_2),b(y_1,y_2))$:
In this case $V_{3,\y}^{(4)}$ is a non-singular curve, and 
$Z_{3,\y}^{(4)}$ intersects the faces labeled by $(1,1,2)$ and $(1,2,1)$ corresponding to
Coxeter elements $s_3$ and $s_2$ respectively.

In this case,  
\[
\HH^0(Z_{3,\y}^{(4)}, Z_{3,\y}^{(4,T)}) = 0
\]
 if 
 \[
 T = \{s_2\}, \{s_3\}, \{s_2,s_3\}, \{s_1,s_2,s_3\}
 \] 
 and 
\[
\HH^0(Z_{3,\y}^{(4)},Z_{3,\y}^{(4,T)}) \cong \Q
\] 
if
\[
T =\emptyset, \{s_1\}.
\]
This implies that
\begin{eqnarray*}
\HH^0(V_{3,\y}^{(4)}) &\cong_{\mathfrak{S}_4}& \Psi_{\emptyset}^{(4)} \oplus \Psi_{\{s_1\}}^{(4)} \\
&\cong_{\mathfrak{S}_4}& 1_{\mathfrak{S}_4} \oplus \mathbb{S}^{(3,1)} \mbox{ (using \eqref{eqn:eg:1} and \eqref{eqn:eg:3})}.
\end{eqnarray*}

In dimension one we have,

\[
\HH^1(Z_{3,\y}^{(4)}, Z_{3,\y}^{(4,T)}) = 0
\]
 if 
 \[
 T = \emptyset,\{s_1\} \{s_2\}, \{s_3\}, \{s_1,s_3\}, \{s_1,s_2\}
 \] 
 and 
\[
\HH^1(Z_{3,\y}^{(4)},X_{3,\y}^{(4,T)}) \cong \Q
\] 
if 
\[
T = \{s_2,s_3\}, \{s_1,s_2,s_3\}.
\]
This implies that
\begin{eqnarray*}
\HH^1(V_{3,\y}^{(4)}) &\cong_{\mathfrak{S}_4}& \Psi_{\{s_2,s_3\}}^{(4)} \oplus \Psi_{\{s_1,s_2,s_3\}}^{(4)} \\
&\cong_{\mathfrak{S}_4}& \mathbb{S}^{2,1,1} \oplus \mathbf{sign}_4 \mbox{ (using \eqref{eqn:eg:4} and \eqref{eqn:eg:2})}.
\end{eqnarray*}

It follows that 
\[
b_0(V_{3,\y}^{(4)}) = 1+3 = 4,
\]
and  
\[
b_1(V_{3,\y}^{(4)}) = 3+1 = 4.
\]

\item
\label{itemlabel:eg:key:d}
$y_3 = b(y_1,y_2)$:
In this case,
the Vandermonde variety $V_{3,\y}^{(4)}$ is of dimension $1$ but has singularities, and  $Z_{3,\y}^{(4)}$ intersects the faces labeled by $(2,2)$ and $(1,2,1)$ (the intersection with the face labeled $(1,2,1)$ are the singular points of  $V_{3,\y}^{(4)}$).
Thus, $Z_{3,\y}^{(4)}$ intersects the faces
labeled by 
Coxeter elements $s_1,s_2$ and $s_3$.

In this case,  
\[
\HH^0(Z_{3,\y}^{(4)}, Z_{3,\y}^{(4,T)}) = 0
\] 
if 
\[
T =\{s_1\}, \{s_2\}, \{s_3\}, \{s_1,s_3\}, \{s_1,s_2\}, \{s_2,s_3\}, \{s_1,s_2,s_3\},
\]
and
\[
\HH^0(Z_{3,\y}^{(4)},Z_{3,\y}^{(4,T)}) \cong \Q
\]
if 
\[
T =\emptyset.
\]
This implies that
\begin{eqnarray*}
\HH^0(V_{3,\y}^{(4)}) &\cong_{\mathfrak{S}_4}& \Psi_{\emptyset}^{(4)} \\
&\cong_{\mathfrak{S}_4}& 1_{\mathfrak{S}_4}.
\end{eqnarray*}

In dimension one we have,
\[
\HH^1(Z_{3,\y}^{(4)}, Z_{3,\y}^{(4,T)}) = 0,
\] 
if 
\[
T = \emptyset, \{s_1\}, \{s_2\}, \{s_3\}, \{s_1,s_3\},
\] 
and 
\[
\HH^1(Z_{3,\y}^{(4)},Z_{3,\y}^{(4,T)}) \cong \Q,
\]
if
\[
T = \{s_1,s_2\} , \{s_2,s_3\}, \{s_1,s_2,s_3\}.
\]
This implies that
\begin{eqnarray*}
\HH^1(V_{3,\y}^{(4)}) &\cong_{\mathfrak{S}_4}& \Psi_{\{s_1,s_2\}}^{(4)} \oplus  \Psi_{\{s_2,s_3\}}^{(4)} \oplus \Psi_{\{s_1,s_2,s_3\}}^{(4)} \\
&\cong_{\mathfrak{S}_4}& 2 \mathbb{S}^{2,1,1} \oplus \mathbf{sign}_4 \mbox{ (using \eqref{eqn:eg:4} and \eqref{eqn:eg:2})}.
\end{eqnarray*}

It follows that 
\[
b_0(V_{3,\y}^{(4)}) = 1,
\] 
and  
\[
b_1(V_{3,\y}^{(4)}) = 2 \cdot 3+1 = 7.
\]

This last equation can be verified directly by hand noting that $V_{3,\y}^{(4)}$ has the structure of a connected graph 
containing $6$ vertices (the $\binom{4}{2}$ singular points consisting of the orbit of  the point $Z_{3,\y}^{(4)} \cap \Weyl_{(2,2)}$), and the degree of each vertex is $4$. Thus the graph has $12$ edges, and hence 
\begin{eqnarray*}
\chi(V_{3,\y}^{(4)}) &=& -6 \\
&=& b_0(V_{3,\y}^{(4)}) - b_1(V_{3,\y}^{(4)}) \\
&=& 1 - b_1(V_{3,\y}^{(4)}),
\end{eqnarray*}
and thus,
\[
b_1(V_{3,\y}^{(4)}) = 7.
\]
\item
\label{itemlabel:eg:key:e}
$y_3 \in (b(y_1,y_2) , c(y_1,y_2))$:
In this case, 
$V_{3,\y}^{(4)}$ is a non-singular curve, and  $Z_{3,\y}^{(4)}$ intersects the faces labeled by $(2,1,1)$ and $(1,2,1)$ corresponding to
Coxeter elements $s_1$ and $s_2$ respectively.
The isotypic decomposition of $\HH^*(V_{3,\y}^{(4)}) $ in this case  is identical to the Case  \eqref{itemlabel:eg:key:c}
and is omitted.

\item
\label{itemlabel:eg:key:f}
$y_3= c(y_1,y_2)$: In this case, $V^{(4)}_{3,\y}$ is non-empty and singular, and coincides with 
other $4$ (compared to Case \eqref{itemlabel:eg:key:b}) of the $8$ vertices of degree $6$. In this case, 
$Z_{3,\y}^{(4)}$ is a point which must necessarily belong to  the face labeled by $(1,3)$.
The isotypic decomposition of $\HH^*(V_{3,\y}^{(4)}) $ in this case is identical to the Case \eqref{itemlabel:eg:key:b}
and is omitted.

\item
\label{itemlabel:eg:key:g}
$y_3  \in (c(y_1,y_2), \infty)$:
In this case,  $V^{(4)}_{3,\y}$ is again empty.
\end{enumerate}
\end{enumerate}

Notice, that the Specht module $\mathbb{S}^{(2,2)}$ does not appear with positive multiplicity in 
$\HH^*(V_{3,\y}^{(4)})$, $\y \in \R^3$ in the above analysis. Using an equivariant Leray spectral sequence
argument (cf. proof of Theorem~\ref{thm:main1}) we can deduce  from this fact the following `toy' theorem 
(which is not directly deducible from the statement of Theorem~\ref{thm:main1}):

\begin{theorem*}
If $S \subset \R^4$ is a $\mathcal{P}$-semi-algebraic set, for $\mathcal{P} \subset \R[X_1,\ldots,X_4]^{\mathfrak{S}_4}_{\leq 3}$, then
\[
m_{i,(2,2)}(S)=0.
\]
\end{theorem*}

\begin{proof}
See proof of Theorem~\ref{thm:main1} and the preceding remark.
\end{proof}

\begin{remark}
\label{rem:eg:V-4-3}
Note that it follows from the analysis in Example~\ref{eg:V-4-3} that
\begin{eqnarray*}
\max_{\y \in \R^3, \lambda \in \Par_0(V^{(4)}_{3,\y})}\length(\lambda) &=& 2, \\
\max_{\y \in \R^3, \lambda \in \Par_1(V^{(4)}_{3,\y})} \length(\lambda) &=& 4,
\end{eqnarray*}
while the Part \eqref{itemlabel:thm:vandermonde:a} of Theorem~\ref{thm:vandermonde} provides the upper bounds:
\begin{eqnarray*}
\max_{\y \in \R^3, \lambda \in \Par_0(V^{(4)}_{3,\y})} \length(\lambda) &<& 0 + 2\cdot 3 -1 = 5, \\
\max_{\y \in \R^3, \lambda \in \Par_1(V^{(4)}_{3,\y})} \length(\lambda) &< & 1 + 2\cdot 3 -1 = 6.
\end{eqnarray*}
Thus,   Example~\ref{eg:V-4-3} is in agreement with Theorem~\ref{thm:vandermonde}.
\end{remark}

We now return to the proofs of the main theorems.

\section{Proofs of Theorems~\ref{thm:main1} and \ref{thm:vandermonde} }
\label{sec:proofs1}
We first need a few preliminary results.
\subsection{Preliminary Results}
We start by recalling a standard definition.

\begin{definition}
\label{def:regular}
We say that a semi-algebraic set $S \subset \R^k$  is a semi-algebraic regular cell of dimension $p$, if
the pair $(\overline{S},S)$ is semi-algebraically homeomorphic to $(\overline{B_p(\mathbf{0},1)},B_p(\mathbf{0},1))$
where $B_p(\mathbf{0},1)$ denotes the unit ball in $\R^p$.  
\end{definition}

\begin{remark}[Monotonicity and regularity of semi-algebraic sets]
\label{rem:regular}
We will prove in Proposition~\ref{prop:agk} that the intersections of weighted
 Vandermonde varieties with the interior  of $\Weyl^{(k)}$ is a semi-algebraic regular cell of dimension $k-d$, if
 the dimension of the variety is equal to $k-d$, 
 and this property will play an important role later in the paper
 (see Lemma~\ref{lem:regular} and Proposition~\ref{prop:cell-complex}).
 To prove that a given semi-algebraic set is a semi-algebraic regular cell is often not easy.
In order to overcome this difficulty, a stronger notion, that of  a \emph{monotone cell},  was introduced in \cite{BGV2012}. The property that a semi-algebraic set is a monotone cell is much easier to check. 
We do not reproduce the definition of a monotone cell here but refer the reader to \cite[Theorem 9]{BGV2012}
for one of the several equivalent definitions which is the easiest to check for the sets $Z_{\w,d,\y}^{(k)}$.
Finally, the main result (Theorem 6) in \cite{BGV2012} states that a semi-algebraic set which is a monotone cell
is a semi-algebraic regular cell, which is what we will use in the proof of  Proposition~\ref{prop:agk}. 
\end{remark}

The following proposition which has been referred to before, 
and which describes  the topological structure of the intersection of a general
Vandermonde variety with a Weyl chamber,  is a key topological ingredient in our proofs.

\begin{proposition}
\label{prop:agk}
For every $\w \in \R_{> 0}^k$,  $d,k \geq 0, d \leq k$, and $\y \in \R^d$,
$Z_{\w,d,\y}^{(k)}$  is either empty, 
a point, or semi-algebraically homeomorphic 
a semi-algebraic regular cell of dimension $k-d$.
\end{proposition}

\begin{proof}
Suppose that $Z_{\w,d,\y}^{(k)}$  is not empty.
Let $\x \in Z_{\w,d,\y}^{(k)}$ and suppose that $\x$ is a regular point
of the intersection of the Vandermonde variety $V_{\w,d,\y}$ with the linear subspace $L_\lambda$
(i.e. the linear hull of the  face $\Weyl_{\lambda}$) for some $\lambda \in \Comp(k)$. Then,
$\x$ is a regular point of $V_{\w,d,\y}$, and $\x \in \overline{Z_{\w,d,\y}^{(k)} \cap \Weyl^{(k),o}}$. \\

We next prove that if $Z_{\w,d,\y}^{(k)} \neq \overline{Z_{\w,d,\y}^{(k)} \cap \Weyl^{(k),o}}$, then
$Z_{\w,d,\y}^{(k)}$ must be a point. 
Indeed, if $\x \in Z_{\w,d,\y}^{(k)}$, but $\x \not\in \overline{Z_{\w,d,\y}^{(k)} \cap \Weyl^{(k),o}}$,
then by the above observation and
\cite[Theorem 5]{Arnold}, 
$\x \in \Weyl_\lambda^o$, with
$\length(\lambda) < d$, and moreover $Z_{\w,d,\y}^{(k)} \cap \Weyl_\lambda^o = \{\x\}$.
Moreover, in this case $\x$ must be an isolated point of 
$Z_{\w,d,\y}^{(k)}$, since any neighborhood of $\x$ in $Z_{\w,d,\y}^{(k)}$, unless equal to just $\x$ itself,
will contain some regular point $\x'$ of the intersection of $Z_{\w,d,\y}^{(k)}$ with  $L_{\lambda'}$ with
$\lambda \prec \lambda'$, and this would imply that $\x \in \overline{Z_{\w,d,\y}^{(k)} \cap \Weyl^{(k),o}}$.
But on the other hand we know that $Z_{\w,d,\y}^{(k)}$ is contractible \cite[Theorem 1.1]{Kostov}.
This proves that in this case $Z_{\w,d,\y}^{(k)} = \{\x\}$, and hence 
if $Z_{\w,d,\y}^{(k)} \neq \overline{Z_{\w,d,\y}^{(k)} \cap \Weyl^{(k),o}}$, 
$Z_{\w,d,\y}^{(k)}$ is a point.  \\

So we might suppose that 
\begin{equation}
\label{eqn:prop:agk}
Z_{\w,d,\y}^{(k)} = \overline{Z_{\w,d,\y}^{(k)} \cap \Weyl^{(k),o}}.
\end{equation}
 
In this case $Z_{\w,d,\y}^{(k)} \cap \Weyl^{(k),o} \neq \emptyset$, and 
using \cite[Theorem 5]{Arnold}  $Z_{\w,d,\y}^{(k)} \cap \Weyl^{(k),o}$ is  non-singular of dimension
$k-d$.
Now using \cite[Corollary 2.2]{Kostov}, and \cite[Theorem 9]{BGV2012}
we deduce that 
$Z_{\w,d,\y}^{(k)} \cap \Weyl^{(k),o}$
is a monotone
cell (see \cite{BGV2012} for the definition of a monotone cell). 
This implies using  \cite[Theorem 13]{BGV2012}  that 
$Z_{\w,d,\y}^{(k)} \cap \Weyl^{(k),o}$
is a regular cell. 
In conjunction with  \eqref{eqn:prop:agk} 
this implies that
$Z_{\w,d,\y}^{(k)}$ is semi-algebraically homeomorphic to the closure of a regular cell, and the boundary of  
$Z_{\w,d,\y}^{(k)}$ is semi-algebraically homeomorphic to the sphere $\Sphere^{k-d-1}$.
\end{proof}

\begin{remark}
\label{rem:agk}
Using Proposition~\ref{prop:agk} again on the intersection of $Z_{\w,d,k}$ with the faces of $\Weyl^{(k)}$ we get that
if $Z_{\w,d,k}$ is not empty or a point, then its boundary is a regular cell complex (homeomorphic to
$\Sphere^{k-d-1}$).
\end{remark}

\begin{definition}
\label{def:Leray}
Let $X$ be a closed and bounded semi-algebraic set and $\mathcal{C}$ be a finite set of closed semi-algebraic 
subsets of $X$. We say that $\mathcal{C} = (C_i)_{i \in I}$, where $I$ is a finite set,  is a \emph{closed Leray cover}
of $X$ if $\mathcal{C}$ satisfies:
\begin{enumerate}[(a)]
\item
$X = \bigcup_{i \in I } C_i$;
\item
for  each subset $J  \subset I$, $\bigcap_{j \in J} C_j$ is empty or
semi-algebraically contractible.
\end{enumerate}
We say that  $\mathcal{C}$ is a \emph{regular} closed Leray cover if in addition  
for  each subset $J \subset I$, $\bigcap_{j \in J} C_j$ is empty or
the closure of a regular semi-algebraic  cell.
\end{definition}

\begin{notation}[Nerve complex associated to  a closed Leray cover]
\label{not:nerve}
Given a closed Leray cover $\mathcal{C} = (C_i)_{i \in I}$ with $I = \{1,\ldots,N\}$,
we will denote by $\mathcal{N}(\mathcal{C})$ the  simplicial complex whose set of $p$-dimensional simplices are given by
\[
\mathcal{N}_p(\mathcal{C}) = \{(\alpha_0,\ldots,\alpha_p) \mid 1 \leq \alpha_0 < \cdots < \alpha_p \leq  N, C_{\alpha_0} \cap \cdots \cap C_{\alpha_p} \neq \emptyset\}.
\]
\end{notation}

We need the following technical lemma in the proof of Proposition~\ref{prop:cell-complex} which
plays an important role in the proof of Theorem~\ref{thm:vandermonde}.

\begin{lemma}
\label{lem:regular}
Let $(P_i)_{i \in I}$, and $(Q_j)_{j\in J}$ be finite tuples of polynomials
in $\R[X_1,\ldots,X_k]$, and $S \subset \R^k$ a basic closed semi-algebraic set defined by
\[
\bigwedge_{i \in I} (P_i = 0) \wedge \bigwedge_{j \in J} (Q_j > 0),
\] 
such that the closure 
$\overline{S}$ of $S$ is defined by 
\[
\bigwedge_{i \in I} (P_i = 0) \wedge \bigwedge_{j \in J} (Q_j \geq 0).
\]
Moreover, suppose that the pair $(\overline{S},S)$ is semi-algebraically homeomorphic to
\[
(\overline{B_p(\mathbf{0},1)},B_p(\mathbf{0},1))
\] 
(recall that $B_p(\mathbf{0},1)$ denotes the unit ball in $\R^p$).

Then for all $J' \subset J$, and all sufficiently small $\eps > 0$, the semi-algebraic set
$S_{J',\eps}$ 
(see Figure~\ref{fig:regular-cell})
defined by 
\[
\bigwedge_{i \in I} (P_i = 0) \wedge \bigwedge_{j \in J'} (Q_j \geq \eps) \bigwedge_{j \in J - J'} (Q_j \geq 0) 
\]
is semi-algebraically contractible.
\end{lemma}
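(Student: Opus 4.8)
The plan is to show that $S_{J',\eps}$ is obtained from the contractible set $S$ (or rather from its closure $\overline{S}\cong \overline{B_p(\mathbf{0},1)}$) by a deformation that only shrinks it slightly near certain faces, without changing its homotopy type. The starting point is the hypothesis that $(\overline{S},S)$ is semi-algebraically homeomorphic to the standard ball-pair $(\overline{B_p(\mathbf{0},1)},B_p(\mathbf{0},1))$; in particular $\overline S$ is a semi-algebraic regular cell, so $\partial\overline S = \overline S\setminus S$ is a sphere $\Sphere^{p-1}$, and the functions $Q_j|_{\overline S}$ are nonnegative on $\overline S$ and vanish exactly on a closed subset of $\partial\overline S$. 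For $J'\subset J$, the set $S_{J',\eps}$ is the subset of $\overline S$ where $Q_j\geq\eps$ for $j\in J'$ and $Q_j\geq 0$ for $j\in J\setminus J'$; since the latter inequalities hold throughout $\overline S$, we have
\[
S_{J',\eps}=\{\,\x\in\overline S \mid Q_j(\x)\geq\eps \text{ for all } j\in J'\,\}.
\]

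First I would reduce to a single constraint by setting $Q_{J'}=\min_{j\in J'}Q_j$ (a continuous, not necessarily polynomial, semi-algebraic function), so that $S_{J',\eps}=\{\x\in\overline S\mid Q_{J'}(\x)\geq\eps\}$, and observe that $\{Q_{J'}>0\}\supset S$ while $\{Q_{J'}=0\}$ is a closed subset $F\subset\partial\overline S$. Next I would invoke a Hardt-type semi-algebraic triviality / local conic structure argument: for all sufficiently small $\eps>0$ the family $\{\x\in\overline S\mid Q_{J'}(\x)\geq t\}$, $t\in(0,\eps]$, is semi-algebraically trivial over $(0,\eps]$, and moreover each member is a semi-algebraic deformation retract of $\overline S$ (equivalently, $\overline S$ deformation retracts onto $S_{J',\eps}$ by pushing along the "gradient-like" directions of $Q_{J'}$, which is possible because $\overline S$ has no interior critical behaviour — it is a cell and the "bad set" $F$ sits on its boundary). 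Since $\overline S$ is contractible, this gives that $S_{J',\eps}$ is semi-algebraically contractible, which is the claim. The cleanest way to phrase the retraction is: use the homeomorphism with the ball to transport the problem to $\overline{B_p(\mathbf 0,1)}$, where $Q_{J'}$ becomes a nonnegative continuous semi-algebraic function positive on the open ball and vanishing on a closed subset of the sphere; then the straight-line homotopy $\x\mapsto (1-t)\x$ (radial contraction toward the center) eventually pushes every point into $\{Q_{J'}\geq\eps\}$ because $Q_{J'}$ is bounded below by a positive constant on any compact subset of the open ball, and one checks this homotopy can be reparametrised to restrict to a contraction of $S_{J',\eps}$ itself.

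The main obstacle I anticipate is making the deformation retraction \emph{stay inside} $S_{J',\eps}$ during the whole homotopy, rather than merely retracting $\overline S$ onto it: a naive radial contraction toward the barycenter of $\overline S$ need not be monotone in $Q_{J'}$, so a point already in $S_{J',\eps}$ could momentarily leave it. I would handle this either by choosing the contraction point to be a point $\x_0\in S_{J',\eps/2}$ (which exists for $\eps$ small, since $S\neq\emptyset$ and $Q_{J'}>0$ on $S$) together with a monotonicity/convexity property — here one could appeal to the \emph{monotone cell} structure from \cite{BGV2012} and Remark~\ref{rem:regular}, which is precisely the tool the paper uses to control such retractions — or, more elementarily, by first retracting $S_{J',\eps}$ onto $S_{J',\eps'}$ for a larger $\eps'$ and iterating, using semi-algebraic triviality of the family $\{Q_{J'}\geq t\}$ on the interval $[\eps,\eps_0]$ to conclude all these sets are semi-algebraically homeomorphic, and finally noting $S_{J',\eps_0}$ is contractible for $\eps_0$ close enough to $\max_{\overline S}Q_{J'}$ because it is then a small regular neighbourhood of the (nonempty, connected, contractible) locus where $Q_{J'}$ is maximal. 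Since a semi-algebraically trivial family over a connected base has semi-algebraically homeomorphic fibres, contractibility of one fibre propagates to all, completing the argument. The figure reference \verb|Figure~\ref{fig:regular-cell}| in the statement suggests the authors have in mind exactly this picture of $S$ with its defining walls being pushed inward by $\eps$.
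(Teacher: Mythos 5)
You correctly reduce to $Q_{J'}=\min_{j\in J'}Q_j$, note that $\{Q_{J'}=0\}\cap\overline{S}$ lies in $\overline{S}\setminus S$, and you even pinpoint the real difficulty: the radial contraction of the ball need not preserve the superlevel set $\{Q_{J'}\geq\eps\}$. But neither of your two repairs closes that gap, so the proof is not complete. The appeal to monotone cells is unavailable: the only hypothesis of Lemma~\ref{lem:regular} is that $(\overline{S},S)$ is homeomorphic to the standard ball pair, and nothing guarantees any monotonicity of $Q_{J'}$ in the transported coordinates (in the paper, \cite{BGV2012} is used only inside Proposition~\ref{prop:agk} to establish regularity of the cell, not to control retractions here). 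The ``more elementary'' route fails outright: Hardt triviality only gives triviality of the family $\{Q_{J'}\geq t\}$ over finitely many subintervals of $(0,\max_{\overline{S}}Q_{J'}]$, not from a small $\eps$ all the way up to an $\eps_0$ near the maximum, and the top superlevel sets need not be contractible because the maximum locus of $Q_{J'}$ need not be contractible or even connected. Concretely, take $p=2$, $I=\emptyset$, $J=J'=\{1\}$ and $Q_1=(1-X_1^2-X_2^2)(X_1^2+X_2^2+c)$ with $0<c<1$: then $S$ is the open unit disk and $\overline{S}$ the closed disk, so the hypotheses hold, but the maximum locus is the circle $X_1^2+X_2^2=(1-c)/2$; for $\eps<c$ the set $\{Q_1\geq\eps\}$ is a disk, while for $\eps_0$ between $c$ and the maximum it is an annulus. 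So the fibres are not all homeomorphic and contractibility cannot be propagated down from the top. Your first-paragraph justification (``pushing along gradient-like directions, since $\overline S$ has no interior critical behaviour'') is also not valid as stated: $Q_{J'}$ can perfectly well have interior critical points; only its small positive level values are eventually regular, which is not what you argue.

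The missing idea, which is how the paper proceeds, is that one should not try to contract $S_{J',\eps}$ (or retract $\overline{S}$ onto it) at all. Instead set $S''=\{\x\in\overline{S}\mid Q_j(\x)=0 \mbox{ for some } j\in J'\}$, a closed subset of $\overline{S}\setminus S$, and $S'=\overline{S}\setminus S''$. Transporting the radial contraction $(\x,t)\mapsto(1-t)\x$ of $\overline{B_p(\mathbf{0},1)}$ through the given homeomorphism, every point is moved into the open cell $S$ for all $t>0$, hence away from $S''$; therefore the contraction of $\overline{S}$ restricts to a semi-algebraic contraction of $S'$, with no monotonicity of $Q_{J'}$ needed. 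Finally, the local conic structure theorem \cite[Theorem 9.3.6]{BCR} (the standard ``small $\eps$'' limit argument) shows that $S_{J',\eps}$ and $S'$ are semi-algebraically homotopy equivalent for all sufficiently small $\eps>0$, whence $S_{J',\eps}$ is contractible. Your proposal contains the raw ingredients (the homeomorphism with the ball, the radial contraction, a limiting argument in $\eps$) but assembles them in an order that forces you to prove a preservation property that is false in general, and the substitute arguments you offer do not hold.
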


\begin{figure}
\includegraphics[scale=0.6]{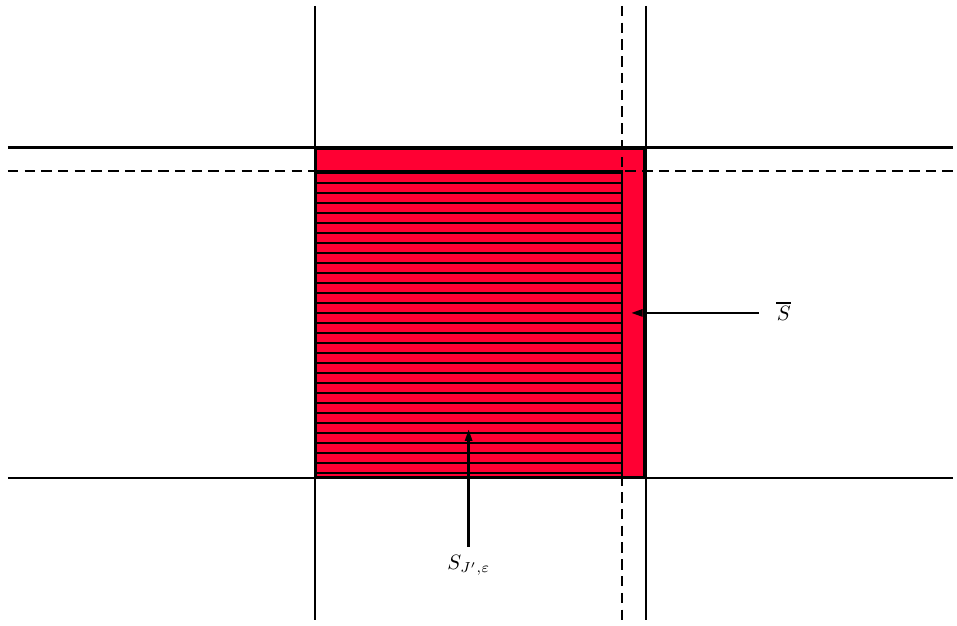}
\caption{Schematic depiction of the sets $\overline{S}$ and $S_{J',\eps}$}
\label{fig:regular-cell}
\end{figure}

\begin{proof}
Let $S', S'' $ be the semi-algebraic subsets of $\overline{S}$ defined by 
\[
\bigwedge_{i \in I} (P_i = 0) \wedge \bigwedge_{j \in J'} (Q_j > 0) \bigwedge_{j \in J - J'} (Q_j \geq 0),
 \]
 and 
\[
\bigwedge_{i \in I}(P_i=0) \wedge \bigwedge_{j' \in J'} (Q_{j'} = 0) \wedge \bigwedge_{j \in J - J'} (Q_j \geq 0),
\]
respectively.
 
Observe that 
\[
S' = \overline{S} - S'',
\]
and
\[
S'' \subset \overline{S} - S.
\]
Let $\phi: \overline{S} \times [0,1] \rightarrow \overline{S}$ be the homeomorphic image of the
standard retraction of $\overline{B_p(\mathbf{0},1)}$ to $\mathbf{0}$ (i.e.
$(\x,t) \mapsto (1-t) \x$).

Since $S''$ is contained in the boundary of $S$, we can
restrict the retraction $\phi$ to
$S' = \overline{S} - S''$
and obtain that $S'$ is also semi-algebraically contractible.
It now follows from the 
the local conic structure theorem for semi-algebraic sets \cite[Theorem 9.3.6]{BCR} 
that for all small enough $\eps >0$
that $S'$ and $S_{J',\eps}$
are semi-algebraically homotopy equivalent, and hence 
$S_{J',\eps}$ is also semi-algebraically contractible.
\end{proof}

\begin{proposition}
\label{prop:cell-complex}
Let $2 \leq d \leq k$, $\y \in \R^d$, $V = V_{d,\y}^{(k)}$, 
$\dim(V) = k-d$,
$K = V\cap \bigcup_{s \in \Coxeter(k)} \Weyl^{(k)}_s$,
$I = \{s \in \Coxeter(k) \mid V\cap \Weyl^{(k)}_s \neq \emptyset\}$.
Let $J \subset I$, and $K^J =  V \cap \bigcup_{s \in J} \Weyl^{(k)}_s$.
Then:
\begin{enumerate}[1.]
\item
\label{itemlabel:prop:cell-complex:0}
$K$ is semi-algebraically homeomorphic to the $\Sphere^{k-d-1}$.
\item
\label{itemlabel:prop:cell-complex:1}
The tuple $\mathcal{C} = (V_s = V \cap \Weyl^{(k)}_s)_{s \in I}$ is a regular closed Leray cover of $K$.

\item
\label{itemlabel:prop:cell-complex:a}
$\HH^i(K^J) = 0$ for $i \geq \card(J)$.
\item
\label{itemlabel:prop:cell-complex:b}
$\HH^i(K^J) = 0$ for  
$0 < i \leq \card(J) - d-1$.

\item
\label{itemlabel:prop:cell-complex:c}
$\HH^0(K^J) \cong \Q$ if 
$\card(J) \geq d+1$.
\end{enumerate}
\end{proposition}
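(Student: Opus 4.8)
The plan is to establish the five items essentially in the order listed, using Proposition~\ref{prop:agk} as the geometric engine and then a nerve-plus-Alexander-duality argument for the cohomological consequences. First, for item~\eqref{itemlabel:prop:cell-complex:0}: since $\dim(V)=k-d$, Proposition~\ref{prop:agk} tells us that $Z := V\cap\Weyl^{(k)}$ is the closure of a regular semi-algebraic cell of dimension $k-d$, so its boundary $\partial Z$ is semi-algebraically homeomorphic to $\Sphere^{k-d-1}$. By definition $K$ is the union of the intersections of $V$ with the walls $\Weyl^{(k)}_s$, and since every point of $\partial Z$ lies on some wall (a point of $Z$ in the interior $\Weyl^{(k),o}$ is an interior point of the cell, by the monotone-cell structure in the proof of Proposition~\ref{prop:agk}), we get $K = \partial Z \cong \Sphere^{k-d-1}$. (Note: the statement's $n=k+d-1$ should read $k-d-1$; I would carry through with the dimension that the proof produces.) For item~\eqref{itemlabel:prop:cell-complex:1}: the sets $V_s = V\cap\Weyl^{(k)}_s$ for $s\in I$ are nonempty closed semi-algebraic sets covering $K$; a nonempty intersection $\bigcap_{s\in J'} V_s = V\cap\Weyl^{(k)}_{J'}$ is again a Vandermonde-type variety intersected with a lower-dimensional face, hence by Proposition~\ref{prop:agk} (applied to the weighted Vandermonde variety obtained by restricting to the linear hull $L_{\lambda(J')}$ and relabeling coordinates, as in Remark~\ref{rem:agk}) it is empty, a point, or the closure of a regular cell—in all nonempty cases semi-algebraically contractible and the closure of a regular cell. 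This is exactly a regular closed Leray cover, and Lemma~\ref{lem:regular} guarantees that the corresponding $\eps$-shrunk pieces used to compute relative cohomology behave well.

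Next I would address items~\eqref{itemlabel:prop:cell-complex:a} and \eqref{itemlabel:prop:cell-complex:b}, which are the cohomological heart. The key point is that $K^J = \bigcup_{s\in J} V_s$ is covered by the regular closed Leray subcover $(V_s)_{s\in J}$, so by the nerve lemma $K^J$ is homotopy equivalent to the nerve $\mathcal{N}((V_s)_{s\in J})$, which is a simplicial complex on the vertex set $J$ and therefore has $\HH^i = 0$ for $i \geq \card(J)$ (a simplicial complex on $\card(J)$ vertices has dimension at most $\card(J)-1$); this gives \eqref{itemlabel:prop:cell-complex:a}. For the lower-range vanishing \eqref{itemlabel:prop:cell-complex:b}, the idea is Alexander duality on the sphere $K\cong\Sphere^{k-d-1}$: writing $K = K^J \cup (V \cap \bigcup_{s\in I-J}\Weyl^{(k)}_s)$ and using that $K$ has the homology of a sphere of dimension $k-d-1$, the reduced cohomology $\widetilde{\HH}^i(K^J)$ is controlled by the homology of the complementary union, and the complementary union, being covered by at most $\card(I-J) \le \card(I) - \card(J)$ regular-cell pieces with contractible intersections, has a nerve of dimension at most $\card(I) - \card(J) - 1 \le (k-1) - \card(J) - 1$. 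Combining the Mayer--Vietoris/Alexander-duality dimension shift $\widetilde{\HH}^i(K^J) \cong \widetilde{\HH}_{k-d-2-i}(\text{complement})$ with this dimension bound, and using $\card(I)\le k-1$, yields $\HH^i(K^J) = 0$ for $0 < i \le \card(J) - d - 1$. I would need to be careful here to use reduced cohomology throughout and to handle the case $K^J = K$ (i.e.\ $J = I$) separately, where the answer is just the cohomology of the sphere.

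Finally, item~\eqref{itemlabel:prop:cell-complex:c}: connectedness of $K^J$ when $\card(J)\ge d+1$. Here I would argue that $K^J$ is homotopy equivalent to the nerve $\mathcal{N}((V_s)_{s\in J})$, and that this nerve is connected as soon as $\card(J) \ge d+1$, because any two walls $\Weyl^{(k)}_s,\Weyl^{(k)}_{s'}$ have $V$-intersections that can be connected through a chain of pairwise-intersecting $V_s$'s—concretely, $\bigcap_{s\in J} V_s = V\cap\Weyl^{(k)}_J$ is nonempty when $\card(J)$ is large enough relative to $d$ (the face $\Weyl^{(k)}_J$ has codimension $\card(J)$ inside $\Weyl^{(k)}$, and as long as $k-\card(J) \ge k-d$ fails, i.e.\ roughly $\card(J) \le d$, the variety may miss it, but once $\card(J)\ge d+1$ a connectivity/contractibility argument via Proposition~\ref{prop:agk} applied along faces forces the relevant intersections to be nonempty and the nerve to be connected). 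Alternatively, and more cleanly, \eqref{itemlabel:prop:cell-complex:c} follows from combining \eqref{itemlabel:prop:cell-complex:a} applied to $I - J$ with the sphere structure of $K$: if $\card(J)\ge d+1$ then $\card(I-J) \le k-1-(d+1) = k-d-2$, so $\HH^i(K^{I-J}) = 0$ for $i \ge k-d-2$, and Alexander duality on $K \cong \Sphere^{k-d-1}$ then forces $\widetilde{\HH}^0(K^J) = 0$, i.e.\ $\HH^0(K^J)\cong\Q$.

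The main obstacle I anticipate is making the Alexander-duality bookkeeping in items \eqref{itemlabel:prop:cell-complex:b} and \eqref{itemlabel:prop:cell-complex:c} fully rigorous: one must pass from the closed cover $(V_s)_{s\in J}$ of $K^J$ to an open cover (or equivalently use Lemma~\ref{lem:regular} to thicken/shrink to homotopy-equivalent open or compact pieces), verify that the nerve lemma applies in the semi-algebraic category, and track the exact degree in the duality isomorphism including the reduced-versus-unreduced subtlety and the off-by-one in $\dim K$. The representation-theoretic inputs play no role here; it is purely a careful exercise in combining the nerve lemma, Alexander duality on a semi-algebraic sphere, and the regular-cell structure supplied by Proposition~\ref{prop:agk}.
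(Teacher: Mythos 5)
Your proposal follows essentially the same route as the paper: Proposition~\ref{prop:agk} gives the sphere and regular-cell structure (parts 1--2, with $n=k+d-1$ indeed a typo for $k-d-1$), the nerve lemma gives $\HH^i(K^J)=0$ for $i\geq\card(J)$, and the remaining parts come from Alexander duality on $K\cong\Sphere^{k-d-1}$ together with a nerve bound on the ($\eps$-thickened) complement, whose intersections are handled exactly by Lemma~\ref{lem:regular} as you anticipate. Your first sketch of part~\eqref{itemlabel:prop:cell-complex:c} (nonemptiness of $\bigcap_{s\in J}V_s$ once $\card(J)\geq d+1$) is not justified, but your alternative argument is the paper's one; just apply the duality to the genuine complement $K-K^J_\eps$ (covered by the trimmed sets $V_s-K^J_\eps$, $s\in I-J$) rather than to $K^{I-J}$ itself.
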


\begin{proof}
Parts~\eqref{itemlabel:prop:cell-complex:0}
and \eqref{itemlabel:prop:cell-complex:1}
are immediate from Proposition~\ref{prop:agk}, since each intersection of the 
various $V_s$ are semi-algebraically homeomorphic to some $Z_{\w,d,\y}^{(p)}$ for some
$p$, $0 \leq p < k$, and $\w \in \mathbb{Z}_{>0}^p$  (using the notation from Proposition~\ref{prop:agk}), and
is thus empty, a point,  or semi-algebraically homeomorphic to a regular cell of dimension $p$. \\
 
It follows from the nerve lemma that $\HH^*(K^J) \cong \HH^*(\mathcal{N}(\mathcal{C}^J))$,
where $\mathcal{C}^J = (V_s)_{s\in J}$. Since $\mathcal{N}(\mathcal{C}^J)$ is a simplicial complex
with $\card(J)$ vertices, $\HH^i(\mathcal{N}(\mathcal{C}^J)) = 0$ for $i \geq \card(J)$. This proves Part~\eqref{itemlabel:prop:cell-complex:a}. \\

We now prove Parts~\eqref{itemlabel:prop:cell-complex:b} and \eqref{itemlabel:prop:cell-complex:c}.
We can assume that $J \neq \emptyset$ which implies that  
$K^J \neq \emptyset$, since otherwise the claim is obviously true.

For $s = (i,i+1) \in \Coxeter(k)$, let $P_s$ denote the polynomial $X_{i+1} - X_{i}$.
Then, for each $s \in  I$,
$V_s$ is  the intersection with $V$ of the semi-algebraic set defined by
\[
(P_s = 0) \wedge \bigwedge_{s' \in \Coxeter(k) - \{s\}} (P_{s'} \geq 0).
\]

For $\eps >0$,
denote by $K^J_\eps$  the union of $V_{s,\eps}, s \in J$, where
$V_{s,\eps}$ is the intersection with $K$ of the open semi-algebraic set defined by 
\[
 (-\eps <  P_{s} < \eps) \wedge \bigwedge_{s'  \in \Coxeter(k) - \{s\}} (P_{s'} > -\eps).
\]

Then, using the local conic structure theorem for semi-algebraic sets \cite[Theorem 9.3.6]{BCR}, 
for all small enough $\eps > 0$, $K^J_\eps$ is semi-algebraically homotopy equivalent to 
$K^J$ and  $K - K^J_\eps$ is closed and semi-algebraically homotopy equivalent to $K - K^J$.  \\

We now claim that for all small enough $\eps > 0$,
$(V_s - K^J_\eps)_{s \in I - J}$ is a closed Leray cover of $K - K^J_\eps$.
Let $J' \subset I-J$, and 
consider $\bigcap_{s \in J'} (V_s - K^J_\eps)$. Then, there exists $J'' \subset J$ such that $\bigcap_{s \in J'} (V_s - K^J_\eps)$ is the intersection 
with $V$ of the semi-algebraic set defined by 
\[
\bigwedge_{s \in J'} (P_s = 0) \wedge \bigwedge_{s \in J''} (P_s \geq \eps) \wedge \bigwedge_{s \in \Coxeter(k) - (J' \cup J'')} (P_s \geq 0).
\]
It follows from  Lemma~\ref{lem:regular} and the above description that for all 
$\eps >0$ small enough,
$\bigcap_{s \in J'} (V_s - K^J_\eps)$ is either empty or semi-algebraically contractible, and hence
$(V_s - K^J_\eps)_{s \in I - J}$ is a closed Leray cover of $K - K^J_\eps$.
Using the same argument involving the nerve complex as in the previous paragraph we obtain that 
\[
\HH^i(K - K^J_\eps) = 0
\]
for 
$i \geq \card(I) - \card(J)$.
However, by Alexander duality (see for example \cite[page 296]{Spanier}) we have that

\begin{equation}
\label{eqn:prop:cell-complex:1}
\tilde{\HH}^i(K^J) \cong \tilde{\HH}^i(K^J_\eps)  \cong \tilde{\HH}_{n-i-1}(K- K^J_\eps) .
\end{equation}

Let $n = k-d-1$.
It follows from Part~\eqref{itemlabel:prop:cell-complex:a}  and 
\eqref{eqn:prop:cell-complex:1} that 
$\tilde{\HH}^i(K^J) = 0$ for 
$n - i -1   \geq \card(I) - \card(J)$ or equivalently for
$i \leq n - \card(I) + \card(J) - 1$. \\

Since, $\card(I) \leq n+d$, it follows that 
$\tilde{\HH}^i(K^J) = 0$ for 
$0 \leq i \leq \card(J) - d -1 $.
Parts~\eqref{itemlabel:prop:cell-complex:b} and \eqref{itemlabel:prop:cell-complex:c} of the proposition follows.
\end{proof}

\subsection{Proofs of Theorems~\ref{thm:main1} and \ref{thm:vandermonde} }

 \begin{proof}[Proof of Theorem \ref{thm:vandermonde}]
Let $V = V_{d,\y}^{(k)}$.
We first prove Part~\eqref{itemlabel:thm:vandermonde:a}.
From 
Proposition~\ref{prop:agk} we have that $V$ is either empty, or a finite union of points, or of dimension $k-d$. 
If $V$ is empty there is nothing to prove.
Suppose that $V$ is not empty. \\

Using Theorem~\ref{thm:Davis} we have that 
\begin{equation}
\label{eqn:thm:Davis}
\HH^i(V) \cong \bigoplus_{T \subset \Coxeter(k)} \HH^i(V_{k},V_{k}^T) \otimes_\Q \Psi_{T}^{(k)}.
\end{equation}

Since we have from Proposition~\ref{prop:Solomon} that
\begin{equation*}
\mult_{\mathbb{S}^{\lambda}}(\Psi^{(k)}_{T}) =  0 \mbox{ if }
\length(\lambda) > \card(T)+1,
\end{equation*}
we might as well also assume that 
\[
\length(\lambda) \leq  \card(T)+1,
\]  
or that
\[
\card(T) \geq \length(\lambda) - 1.
\] 

It thus suffices to prove that $\HH^i(V_{k},V_{k}^T) = 0$,
for all pairs $(i,T)$ satisfying:
\begin{eqnarray*}
i &\leq& \length(\lambda)- 2d + 1, \\
\card(T) &\geq& \length(\lambda) - 1,
\end{eqnarray*}
for which it suffices to prove that 
$\HH^i(V_{k},V_{k}^T) = 0$ for  all $(i,T)$ satisfying 
\begin{equation}
\label{eqn:restriction-on-T}
i \leq \card(T) - 2d+2 \Leftrightarrow \card(T) \geq i +2d - 2.
\end{equation}

We now fix the pair $(i,T)$ satisfying \eqref{eqn:restriction-on-T},
and treat the cases $i=0$, $i = 1$, and  $i >1$ separately. \\

\noindent Case $i=0$:
In this case, if $V_k \neq \emptyset$, $\HH^0(V_k,V_k^T) \neq 0$ if and only if 
$V_k^T = \emptyset$. If $V_k \neq \emptyset$, it must meet a $d$-dimensional face of the 
$\Weyl^{(k)}$, which is incident on $k-d$ of the $k-1$ codimension one faces,
$\Weyl^{(k)}_s, s \in \Coxeter(k)$, of $\Weyl^{(k)}$. 
This implies that
\[
V_k^T = \emptyset \Rightarrow \card(T) \leq d-1.
\]

Since, for $d > 1$, $2d - 2 > d -1$,   
it follows that 
\[
\card(T) \geq i +2d - 2 = 2d -2 \Rightarrow \card(T) > d-1 \Rightarrow 
V_k^T \neq  \emptyset \Rightarrow  \HH^0(V_k,V_k^T) = 0.
\]
This completes the proof of Part~\eqref{itemlabel:thm:vandermonde:a} in the case $i=0$. \\

Now suppose that $i > 0$.
Let for $s \in \Coxeter(k)$, $V_s = V \cap \Weyl^{(k)}_s$.
We denote (following the notation in Proposition~\ref{prop:cell-complex})
\begin{eqnarray*}
I &=& \{s \in \Coxeter(k) \mid V_s \neq \emptyset \}, \\
J_T &=& T\cap I, \\
K &=& \bigcup_{s \in I} V_s, \\
K^{J_T} &=& \bigcup_{s \in J_T} V_s  \;\;= \;\;V_k^T.
\end{eqnarray*}

Using Parts~\eqref{itemlabel:prop:cell-complex:0}
and \eqref{itemlabel:prop:cell-complex:1} of Proposition \ref{prop:cell-complex},
$K$ is semi-algebraically homeomorphic  to
$\Sphere^{n}$, with $n= k-d-1$,
$\mathcal{C} = (V_s)_{s \in I}$,  is a regular closed Leray cover of $K$ (cf. Definition~\ref{def:Leray}). \\

It follows from \cite[Theorem 7]{Arnold} that the maximum and minimum of 
$p^{(k)}_{d+1}$ is obtained on $V_k$ in two distinct $d$-dimensional faces of $\Weyl^{(k)}$. Moreover,
each of these two distinct $d$-dimensional faces are incident on exactly $k-d$ codimension one faces, $\Weyl^{(k)}_s, s \in \Coxeter(k)$, of $\Weyl^{(k)}$.  
We thus have
 \begin{equation}
\label{eqn:card1}
 k-d+1 \leq \card(I) \leq k-1 = n+d.
 \end{equation}

Clearly, $\card(J_T) =  \card(T\cap I)  \leq \card(T)$.
On the other hand, 
\begin{eqnarray}
\nonumber
\card(J_T) &=&  \card(T \cap I) \\
\nonumber
&=& \card(T) + \card(I) - \card(T \cup I) \\
\nonumber
&\geq&  \card(T) + \card(I)  - \card(\Coxeter(k)) \\
\nonumber
&\geq&  \card(T) + \card(I)  - (k-1) \\
\nonumber
&\geq & \card(T) + (k-d +1) -(k-1)  \mbox{ (using inequality \eqref{eqn:card1})} \\
\label{eqn:card2}
&=& \card(T) - d +2.
\end{eqnarray}

\noindent
Case $i =1$:
We only need to consider the case 
$i = 1 \leq  \card(T) - 2d +2$.
We distinguish the following two cases: 
\begin{itemize}
\item
If $T  = \emptyset$, then 
since 
$d > 1$,
the inequality 
$i = 1 \leq \card(T) -2d +2$ 
cannot hold.

\item
If $T \neq \emptyset$, 
and 
$i=1 \leq \card(T) -2d +2$, then 
\[
\card(J_T) \geq \card(T) - d +2 \geq 2d -1 -d +2 =d+1,
\] 
and it follows from Part~\eqref{itemlabel:prop:cell-complex:c}  of Proposition~\ref{prop:cell-complex}
that
$\HH^0(V_k^T) = \HH^0(K^{J_T})  \cong \Q$.
In this case the restriction homomorphism $\HH^0(V_{k}) \rightarrow \HH^{0}(V_{k}^T)$ is 
an isomorphism
which implies that 
$\HH^1(V_{k},V_{k}^T) = 0$. 
\end{itemize}

\noindent Case $i > 1$:
In this case, we can assume that
$\dim(V) = k-d$. Otherwise, $V$ is zero-dimensional and $\HH^i(V) = 0$ for $i>0$.
From the exactness of the long exact sequence,
\[
\cdots \rightarrow \HH^{i-1}(V_{k}^T) \rightarrow \HH^i(V_{k},V_{k}^T) \rightarrow \HH^i(V_{k}) \rightarrow \cdots
\]
of the pair $(V_{k},V_{k}^T)$ and the fact that $\HH^i(V_{k}) = 0$ for $i \geq 1$, it suffices to prove that
$\HH^{i-1}(V_{k}^T) = 0$ for 
$1< i \leq \card(T) - 2d+2$ or equivalently
$\HH^{j}(V_{k}^T) = 0$ for 
$1 \leq j \leq \card(T) - 2d+1$. \\

Applying Parts~\eqref{itemlabel:prop:cell-complex:a}  and \eqref{itemlabel:prop:cell-complex:b}  of Proposition~\ref{prop:cell-complex}, noting that $K^{J_T} = V_k^T$, we obtain 
\[
\HH^j(K^{J_T}) = \HH^j(V_{k}^T) = 0
\]
for 
$0 < j \leq \card(T) - 2d +1$.
This completes the proof 
for the case $ i >1$. \\
This completes the proof of Part~\eqref{itemlabel:thm:vandermonde:a}. \\

We now prove Part~\eqref{itemlabel:thm:vandermonde:b}. 
First assume that $\dim(V) = k-d$.
Since we have from Proposition~\ref{prop:Solomon} that
\begin{equation*}
\mult_{\mathbb{S}^{\lambda}}(\Psi^{(k)}_{T}) =  0  \mbox{ if } \length(^{t}\lambda) >  k - \card(T), 
\end{equation*}
we might as well also assume that 
\[
\length(^{t}\lambda) \leq  k -\card(T),
\]  
or that
\[
\card(T) \leq k - \length(^{t}\lambda).
\] 

It thus suffices to prove that $\HH^i(V_{k},V_{k}^T) = 0$,
for all pairs $(i,T)$ satisfying:
\begin{eqnarray*}
i &\geq& k - \length(^{t}\lambda)+1, \\
\card(T) &\leq& k - \length(^{t} \lambda),
\end{eqnarray*}
for which it suffices to prove that 
$\HH^i(V_{k},V_{k}^T) = 0$ for  all $(i,T)$ satisfying 
\[
i \geq \card(T)+1.
\]

From the exactness of the long exact sequence,
\[
\cdots \rightarrow \HH^{i-1}(V_{k}^T) \rightarrow \HH^i(V_{k},V_{k}^T) \rightarrow \HH^i(V_{k}) \rightarrow \cdots
\]
of the pair $(V_{k},V_{k}^T)$ and the fact that $\HH^i(V_{k}) = 0$ for $i \geq 1$, it suffices to prove that
$\HH^{i-1}(V_{k}^T) = 0$ for 
$i \geq \card(T)+1$ or equivalently 
$\HH^{j}(V_{k}^T) = 0$ for 
$j \geq \card(T)$. \\

It follows from Part~\eqref{itemlabel:prop:cell-complex:a} of Proposition~\ref{prop:cell-complex}, 
that $\HH^{j}(V_{k}^T) =\HH^{j}(K^{J_T}) = 0$ for 
$j \geq \card(T)$. \\

If $\dim(V) = 0$, we only need to consider the case $i=0$.
In this case, 
we need to show that for $\lambda \vdash  k$ satisfying 
\[
\length(^{t}\lambda)  \geq k+1,
\] 
$m_{0,^{t}\lambda}(V)=0$.
But since $\length(^{t}\lambda)  \leq k$, this case does not occur.
This completes the proof of Part~\eqref{itemlabel:thm:vandermonde:b}.
 \end{proof}

\begin{proof}[Proof of Theorem~\ref{thm:main1}]
First observe that by the 
local conic structure theorem for semi-algebraic sets \cite[Theorem 9.3.6]{BCR},
there exists $R > 0$, such that
the inclusion 
$S \cap \overline{B_k(\mathbf{0},R)} \hookrightarrow S$ is a semi-algebraic homeomorphism.
Moreover, since $S$ and $\overline{B_k(\mathbf{0},R)}$ are both symmetric, the above inclusion is $\mathfrak{S}_k$-equivariant.
Hence
\begin{equation}
\label{eqn:proof:thm:main1:GV}
\HH^*(S \cap \overline{B_k(\mathbf{0},R)}) \cong_{\mathfrak{S}_k}  \HH^*(S).
\end{equation}
Note that $\overline{B_k(\mathbf{0},R)}$ is defined by the symmetric inequality
\[
\sum_{i=1}^2 X_i^2 - R \leq 0
\]
of degree $2$.
This, in view of the isomorphism in \eqref{eqn:proof:thm:main1:GV},
we can assume without loss of generality (after
replacing $S$ by $S \cap \overline{B_k(\mathbf{0},R)}$ and 
$\mathcal{P}$ be $\mathcal{P} \cup \{\sum_{i=1}^2 X_i^2 - R \}$)
that the given semi-algebraic set $S$ is closed and bounded.  \\

Since $S$ is a $\mathcal{P}$-semi-algebraic set, and 
$\mathcal{P} \subset \R[X_1,\ldots,X_k]^{\mathfrak{S}_k}_{\leq d}$, it follows from the fundamental
theorem of symmetric polynomials, that 
\[
S = (\Phi^{(k)}_{d})^{-1}(\Phi^{(k)}_{d}(S)).
\]
Let $f = \Phi^{(k)}_{d}|_{S}$ and observe that $f$ is a proper map. 
We have a spectral sequence (the Leray spectral sequence of the map $f$), 
converging to $\HH^{p+q}(S)$, whose $E_2$-term is given by
\[
E_2^{p,q} = \HH^p (T, R^q f_*(\Q_S)),
\] 
where $T = f(S)$, and $\Q_S$ denotes the constant sheaf on $S$. \\

Using the proper base change theorem (see for example \cite[\S 3, Theorem 6.2]{Iversen}) we obtain
that for $\y \in T$,
\begin{equation}
\label{eqn:stalk}
R^q f_*(\Q_S)_\y \cong \HH^q(V_{d,\y}^{(k)},\Q),
\end{equation}
and this gives $R^q f_*(\Q_S)$ the structure of a sheaf of $\mathfrak{S}_k$-modules.
Moreover, since the action of $\mathfrak{S}_k$  on $S$ leaves the fibers of the map $f:S \rightarrow T$ invariant,
the action of $\mathfrak{S}_k$ on $E_2^{p,q}$ is given by its action on the sheaf $R^q f_*(\Q_S)$. \\

Now, $\HH^n(S)$ is isomorphic as an $\mathfrak{S}_k$-module to a  ($\mathfrak{S}_k$-equivariant) subquotient of 
\[
\bigoplus_{p+q = n} E_2^{p,q}.
\]

Using Theorem \ref{thm:vandermonde}, we have that 
\begin{equation*}
m_{i,\lambda}(V_{d,\y}^{(k)})=0,
\mbox{ for } i \leq \length(\lambda)- 2d + 1.
\end{equation*}
This implies using \eqref{eqn:stalk} that,
\begin{eqnarray}
\nonumber
\mult_{\mathbb{S}^\lambda} (E_2^{p,n-p}) &=&  0,
\mbox{ for } n- p \leq \length(\lambda)- 2d + 1,\\
\label{eqn:main1:1}
&& \mbox{ or equivalently for $n \leq \length(\lambda) -2d +p + 1$}.
\end{eqnarray}

From the fact that $\HH^{p+q}(S)$ is a ($\mathfrak{S}_k$-equivariant) subquotient of 
$
\bigoplus_{p+q} E_2^{p,q},
$ 
and 
\eqref{eqn:main1:1},  we obtain that
\begin{eqnarray*}
m_{i,\lambda}(S)&=&  0
\mbox{ for }n \leq \length(\lambda)- 2d + 1.
\end{eqnarray*}

This proves Part \eqref{itemlabel:thm:main1:a}.\\

In order to prove Part \eqref{itemlabel:thm:main1:b}, recall first that Theorem~\ref{thm:vandermonde} implies that

\begin{equation}
\label{eqn:main1:2}
m_{i,\lambda}(V_{d,\y}^{(k)}) = 0,
\mbox{ for } i \geq k - \length(^{t}\lambda)+1.
\end{equation}

 Using \eqref{eqn:stalk} and \eqref{eqn:main1:2} we obtain that,
\begin{eqnarray}
\nonumber
\mult_{\mathbb{S}^\lambda} (E_2^{p,n-p}) &=&  0,
\mbox{ for } n- p \geq k - \length(^{t}\lambda)+1\\
\label{eqn:main1:3}
&& \mbox{ or equivalently for $n \geq  p+ k - \length(^{t}\lambda)+1$}.
\end{eqnarray}

Now observe that since $\dim(T) \leq d$, $E_2^{p,q} = 0$ for $p \geq d$. Applying this to
\eqref{eqn:main1:3}, we get that

\begin{eqnarray*}
\mult_{\mathbb{S}^\lambda} (E_2^{p,n-p}) &=&  0,
\mbox{ for $n \geq  k + d - \length(^{t}\lambda)+1$}.
\end{eqnarray*}
This completes the proof of Part \eqref{itemlabel:thm:main1:b}.
\end{proof}

\section{Proof of Theorem~\ref{thm:alg}}
\label{sec:proofs2}
In this section we prove Theorem~\ref{thm:alg} by describing an algorithm for efficiently computing 
the first $\ell+1$ Betti numbers  of any given  symmetric semi-algebraic subset  of $\R^k$ defined by symmetric polynomials
of degrees bounded by $d$,  having complexity bounded by a  polynomial in $k$ (for fixed $d$ and $\ell$). \\

We first outline our method. 
\subsection{Outline of the proof of Theorem~\ref{thm:alg}}
\label{subsec:alg:outline}
We first use a construction due to Gabrielov and Vorobjov discussed in Section~\ref{subsec:GV} below
to reduce to the situation where the given symmetric semi-algebraic set is closed and bounded. 
We then use Theorem~\ref{thm:Davis} to decompose 
the task of computing $b_i(S) = \dim_\Q \HH^i(S)$ into two parts: 
\begin{enumerate}
\item
\label{itemlabel:alg-outline:A}
computing the
dimensions of $\HH^i(S_k,S_k^T)$;
\item
\label{itemlabel:alg-outline:B}
computing the isotypic decompositions of the modules 
$\Psi^{(k)}_T$ for various subsets $T \subset \Coxeter(k)$. 
Notice that using  Theorem \ref{thm:main1}, in order to compute 
$b_i(S)$ for  $i \leq \ell$,
we need to compute isotypic decompositions of $\Psi^{(k)}_T$ 
 with 
 $\card(T) < \ell + 2d -1$.
 \end{enumerate}
 
We first describe an algorithm (cf. Algorithm \ref{alg:mult})  for computing the 
isotypic decomposition of $\Psi^{(k)}_T$, which has complexity polynomially bounded in $k$ if $\card(T)$ 
is bounded by 
$\ell + 2d -1$ 
(considering $\ell$ and $d$ to be fixed). The key ingredient for this algorithm is
Corollary~\ref{cor:Solomon-to-Specht}
which allows a recursive scheme to be used for computing the decomposition. 
The fact that we need to consider only subsets $T$ of small cardinality (using Theorem \ref{thm:main1}) is key
in keeping the complexity bounded by a polynomial.
This accomplishes task \eqref{itemlabel:alg-outline:B}. \\

We next address  task  \eqref{itemlabel:alg-outline:A}. 
We first prove that
that the cohomology groups of the pair $(S_k,S_k^T)$ are isomorphic to those of another semi-algebraic
pair $\left(\widetilde{S^{(T)}_k},\widetilde{S_k^T}\right)$ (cf. Proposition \ref{prop:alg}).
Proposition \ref{prop:alg} is the key mathematical result behind our algorithm.
The advantage of the pair 
$\left(\widetilde{S^{(T)}_k},\widetilde{S_k^T}\right)$ over the original pair  $(S_k,S_k^T)$ is that 
$\widetilde{S^{(T)}_k},\widetilde{S_k^T}$ are subsets of an $O(d+\ell)$-dimensional space
(unlike $S_k, S_k^T$ which are subsets of $\Weyl^{(k)} \subset \R^k$). Moreover, a semi-algebraic description of 
$\left(\widetilde{S^{(T)}_k},\widetilde{S_k^T}\right)$ can be computed efficiently (i.e. with polynomially bounded complexity) from that
of the pair $(S_k,S_k^T)$ using a slightly modified version of efficient quantifier elimination algorithm over reals
(cf. Algorithm~\ref{alg:tilde}). The number and the degrees of the polynomials appearing in the description
of $\left(\widetilde{S^{(T)}_k},\widetilde{S_k^T}\right)$ are bounded by a polynomial in $k$ (for fixed $d$ and $\ell$). Finally,
we compute the Betti numbers of the pair $\left(\widetilde{S^{(T)}_k},\widetilde{S_k^T}\right)$ using effective algorithms for computing semi-algebraic triangulations (cf. Algorithm \ref{alg:betti}). We exploit the fact that this is now a constant (i.e. $O(d+\ell)$) dimensional problem, and we can use algorithms which have doubly exponential complexity in the number of variables without affecting  the overall polynomial complexity of our algorithm.

\subsection{Replacing an arbitrary semi-algebraic set by a closed and bounded one}
\label{subsec:GV}
We recall a fundamental construction due to Gabrielov and Vorobjov
\cite{GV07}
which allows us to reduce to the case when the given symmetric semi-algebraic set is closed and bounded. \\

We first need some preliminaries.
We recall some basic facts about real closed fields and real
closed extensions.

\subsubsection{Real closed extensions and Puiseux series}
We will need some
properties of Puiseux series with coefficients in a real closed field. We
refer the reader to \cite{BPRbook2} for further details.

\begin{notation}
  For $\R$ a real closed field we denote by $\R \left\langle \eps
  \right\rangle$ the real closed field of algebraic Puiseux series in $\eps$
  with coefficients in $\R$. We use the notation $\R \left\langle \eps_{1},
  \ldots, \eps_{m} \right\rangle$ to denote the real closed field $\R
  \left\langle \eps_{1} \right\rangle \left\langle \eps_{2} \right\rangle
  \cdots \left\langle \eps_{m} \right\rangle$. Note that in the unique
  ordering of the field $\R \left\langle \eps_{1}, \ldots, \eps_{m}
  \right\rangle$, $0< \eps_{m} \ll \eps_{m-1} \ll \cdots \ll \eps_{1} \ll 1$.
\end{notation}

Let $\mathcal{P} \subset \R[X_1,\ldots,X_k]$,
$S$ be a $\mathcal{P}$-semi-algebraic set defined by a $\mathcal{P}$-formula $\Phi$. 
Without loss of generality we can suppose that 
\[
\Phi = \Phi_1 \vee \cdots \vee \Phi_N,
\]
where for $1 \leq i \leq N$, 
\[
\Phi_i =  \bigwedge_{P \in \mathcal{P}_{i,0}} (P=0) \wedge  \bigwedge_{P \in \mathcal{P}_{i,1}} (P>0) \wedge  \bigwedge_{P \in \mathcal{P}_{i,-1}} (P<0),
\]
where $\mathcal{P}_{i,0}, \mathcal{P}_{i,1},\mathcal{P}_{i,-1}$ is a partition of the set $\mathcal{P}$. \\

For $\eps,\delta >0$ we denote
\[
\Phi_{i,\eps,\delta} = \bigwedge_{P \in \mathcal{P}_{i,0}} ((P - \eps \leq 0 ) \wedge (P + \eps \geq 0)) \wedge  \bigwedge_{P \in \mathcal{P}_{i,1}} (P- \delta \geq 0) \wedge  \bigwedge_{P \in \mathcal{P}_{i,-1}} (P+\delta \leq 0),
\]
and
\[
\Phi_{\eps,\delta} = \bigwedge_{i=1}^N \Phi_{i,\eps,\delta}.
\]

Gabrielov and Vorobjov \cite{GV07} proved the following theorem.
\footnote{The theorem in \cite{GV07} is not stated using the language of non-archimedean extensions and 
Puiseux series but it is easy to translate it into the form stated here.}

\begin{theorem*}\cite[Theorem 1.10]{GV07}
Let $\mathcal{P} \subset \R[X_1,\ldots,X_k]$ and
$S = \RR(\Phi)$, where $\Phi$ is a $\mathcal{P}$-formula. 
For $0 \leq m \leq k$, let
\begin{equation}
\label{eqn:GV}
\widetilde{\Phi}_m = \left(\bigvee_{0 \leq j \leq m} \Phi_{\eps_j,\delta_j} \right) \wedge (\eps(X_1^2 +\cdots+X_k^2) -1 \leq 0)),
\end{equation}
and let $
S'_m
= \RR\left(\widetilde{\Phi}_m\right) \subset \R\la\eps,\eps_0,\delta_0,\cdots,\eps_m,\delta_m\ra^k$.
Then, 
\[
\HH^i(S) \cong 
\HH^i(S'_m)
\] 
for $0 \leq i < m$. 
\end{theorem*}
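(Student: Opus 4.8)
The statement is \cite[Theorem 1.10]{GV07} recast in the language of algebraic Puiseux series, so the plan splits into two parts: quote the Gabrielov--Vorobjov approximation theorem in its original form over $\R$, where the auxiliary parameters range over sufficiently small positive reals chosen one after another in a prescribed order of smallness, and then transfer it to the real closed extension $\R\la\eps,\eps_0,\delta_0,\ldots,\eps_m,\delta_m\ra$. The transfer is the routine half, and I would do it first: the sets $\widetilde{S}_m$ form a semi-algebraic family in the auxiliary parameters, the Betti numbers $b_i$ of its fibres are semi-algebraically constructible functions of those parameters (cf. \cite[Chapter 6]{BPRbook2}), the region of ``nested small'' parameter values is itself semi-algebraic, and the cohomology of a semi-algebraic set with $\Q$-coefficients is unchanged under extension of the real closed field. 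A Gabrielov--Vorobjov assertion of the form ``for all small enough $t_0 > 0$, then for all small enough $t_1 > 0$ depending on $t_0$, and so on, one has $b_i(\widetilde{S}_m) = b_i(S)$ for $i < m$'' is then exactly the first-order content encoded by the descending chain of infinitesimals $1 \gg \eps \gg \eps_0 \gg \delta_0 \gg \cdots \gg \eps_m \gg \delta_m$ (so that $1/\eps$ is infinitely large), and the Tarski--Seidenberg transfer principle yields $b_i(\widetilde{S}_m) = b_i(S)$, i.e. $\HH^i(S) \cong \HH^i(\widetilde{S}_m)$, over the extension.

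For completeness I would recall the geometry behind \cite[Theorem 1.10]{GV07}. One writes $S = \bigcup_i \RR(\Phi_i)$ as a finite union of locally closed semi-algebraic sets; each $\RR(\Phi_{i,\eps,\delta})$ is cut out by non-strict inequalities, hence is closed, and intersecting with a ball of infinitely large radius (legitimate since every semi-algebraic set is homotopy equivalent to its intersection with balls of all sufficiently large radii, by the conic structure at infinity) turns it into a bounded, ``compact'' approximation of $\RR(\Phi_i)$. As $\delta \ll \eps \to 0$ these approximations vary monotonically and sweep out $S$, so $\HH^\ast(S)$ is recovered as the appropriate limit of the cohomology of the approximations. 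The crucial point is that no single thickening realizes this limit in general, whereas the union $\bigvee_{0 \le j \le m}\Phi_{\eps_j,\delta_j}$ of $m+1$ thickenings with a strictly decreasing hierarchy of parameters does recover the first $m$ cohomology groups: setting $S^{(j)} = \RR\bigl(\bigvee_{l \le j}\Phi_{\eps_l,\delta_l}\bigr)$ one obtains nested inclusions $S^{(0)} \subseteq S^{(1)} \subseteq \cdots$, each of which adds, up to homotopy, one further level of resolution, and a downward induction on the cohomological degree run through the Mayer--Vietoris and long exact sequences of these inclusions gives $\HH^i(S^{(m)}) \cong \HH^i(S)$ for $i < m$.

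The step I expect to be the genuine obstacle is exactly this ``staircase'' stabilization: pinning down how many levels suffice for $\HH^i$ to stabilize, and controlling the connecting maps between consecutive thickenings. The homotopy equivalences it requires --- that each thickened realization deformation retracts onto the next finer one, and that the intervening semi-algebraic families become trivial after an infinitesimal perturbation --- rest on semi-algebraic (Hardt) triviality and on the local conic structure theorems \cite{BCR}, and one has to order the infinitesimals $\eps, \eps_j, \delta_j$ so that the finitely many ``for all sufficiently small'' conditions these arguments generate hold simultaneously. In the present paper this is entirely subsumed into the quoted theorem, so the only genuine work left is the transfer above, which, as the footnote remarks, is straightforward.
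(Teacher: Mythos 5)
Your proposal takes essentially the same route as the paper: the paper gives no proof of this statement, citing \cite{GV07} and remarking in a footnote that the translation into the language of Puiseux series extensions is routine, and you likewise quote the original theorem over the reals and carry out that translation (via semi-algebraic constructibility of the Betti numbers of the fibres in the parameter family, the semi-algebraic description of the nested ``sufficiently small'' region, and invariance of cohomology under real closed field extension, cf.\ \cite[Chapter 6]{BPRbook2}), which is exactly the content the footnote alludes to. Your sketch of the internal Gabrielov--Vorobjov argument is only a loose paraphrase, but this is immaterial since, like the paper, you treat \cite[Theorem 1.10]{GV07} as a black box.
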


\begin{remark}\label{rem:GV}
Observe that 
$S'_m$
is a bounded $\widetilde{\mathcal{P}}_m$-closed semi-algebraic set,
where 
\[
\widetilde{\mathcal{P}}_m = \bigcup_{P \in \mathcal{P}}\bigcup_{0 \leq i \leq m}
\{ P \pm \eps_i, P \pm \delta_i \}  \cup \{\eps \sum_i X_i^2 - 1\}.
\]

Moreover, if $\mathcal{P} \subset \R[X_1,\ldots,X_k]^{\mathfrak{S}_k}_{\leq d}, d \geq 2$, then
\[
\widetilde{\mathcal{P}}_m \subset \R\la\eps,\eps_0,\delta_0,\ldots,\eps_m,\delta_m\ra[X_1,\ldots,X_k]^{\mathfrak{S}_k}_{\leq d},
\]
and $\card(\widetilde{\mathcal{P}}_m) = 4m \cdot \card(\mathcal{P}) +1$. \\

In our algorithmic application (cf. Algorithm~\ref{alg:betti} below)
we will replace the given semi-algebraic set  $S \subset \R^k$ 
by the closed and bounded semi-algebraic set 
$
S'_{\ell+1}
\subset \R\la\eps,\eps_0,\delta_0,\ldots,\eps_{\ell+1},\delta_{\ell+1}\ra^k$.
By the preceding theorem the first $\ell+1$ Betti numbers of $S$ and 
$S'_{\ell+1}$
are equal.
Moreover, the number of infinitesimals appearing in the definition of 
$S'_{\ell+1}$
is bounded by $O(\ell)$.
The number of infinitesimals used to make the deformation from $S$ to 
$S'_{\ell+1}$
is important 
for analyzing the complexity of our algorithms. In our algorithms, we will extend the given ring of coefficients
to a polynomial ring in these infinitesimals. As a result each arithmetic operation in this larger ring needs 
several operations to be performed in the original ring -- and this added cost enters as a multiplicative factor
in the complexity upper bounds (see proof of Proposition \ref{prop:alg:betti}).
\end{remark}

\subsection{Computing the isotypic decomposition of $\Psi^{(k)}_T$}
\label{subsec:multi}
We now describe more precisely our algorithm for computing the multiplicities of various Specht modules in the 
representations $\Psi^{(k)}_T$.

\begin{algorithm}[H]
\caption{(Computing  isotypic decomposition of $\Psi_T^{(k)}$)}
\label{alg:mult}
\begin{algorithmic}[1]
\INPUT
\Statex{
An integer $k \in \Z_{>0}$, and $T \subset \Coxeter(k)$.
 }
 \OUTPUT
 \Statex{
 \begin{enumerate}
 \item
 The set 
 $\Par(k,T) = \{ \lambda \vdash k \mid  \mult_{\mathbb{S}^\lambda}(\Psi_T^{(k)}) \neq 0\}$;
 \item
 $\mult_{\mathbb{S}^\lambda}(\Psi_T^{(k)})$ for each $\lambda \in \Par(k,T)$.
 \end{enumerate}
 }

\PROCEDURE
\If {$T = \emptyset$}
\State {Output  $\Par(k,T) = \{(k)\}$, and $\mult_{\mathbb{S}^{(k)}}(\Psi_T^{(k)})  = 1$ and terminate.} 
\Else
	\If{$k=2$}
		\State {output $\Par(k,T) = \{(1,1)\}$, and $\mult_{\mathbb{S}^{(1,1)}}(\Psi_T^{(k)})  = 1$ and terminate.}
      \EndIf
\EndIf
\For {$\lambda \vdash k, \length(\lambda) \leq \card(T) +1$ }
	\State{ $m_\lambda \gets 0$.}
\EndFor

\State{$ P_T \gets \emptyset$.}
\State {$q \gets \max\{j \mid s_j \in T\}$.}
\State{$Q' \gets T -\{s_q\}$.}
\State{$P' \gets \{s_1,\ldots,s_{q-1}\} - T$.}
\State{$P'' \gets \{s_{q+1},\ldots,s_{k-1}\}$.}

\State{Using a recursive call to Algorithm~\ref{alg:mult} with input $q$ and $Q'$, compute $\Par(q,Q')$ and $\mult_{\mathbb{S}^\mu}(\Psi_{Q'}^{(q)})$  for each $\mu \in \Par(q,Q')$.
}  \label{line:alg:mult:call1}
\For {$\mu \in \Par(q,Q')$}  \label{line:alg:mult:call2}
	\For {$\lambda \in S(\mu,k)$} (cf. Notation~\ref{not:Pieri})
		\State {$P_T \gets P_T \cup \{\lambda\}$.}
		\State { $m_\lambda \gets m_\lambda + \mult_{\mathbb{S}^\mu}(\Psi_{Q'}^{(q)})$.}
	\EndFor
\EndFor

\State {Using a recursive call to Algorithm~\ref{alg:mult} with input $k$ and $P'$, compute $\Par(k,Q')$ and 
$\mult_{\mathbb{S}^\lambda}(\Psi_{P'}^{(k)})$ for each $\lambda \in \Par(k,Q')$.}  \label{line:alg:mult:call3}

\For {$\lambda \in \Par(k,Q')$} \label{line:alg:mult:call4}
	\State { $m_\lambda \gets m_\lambda - \mult_{\mathbb{S}^\lambda}(\Psi_{Q'}^{(k)}).$}
	\If {$m_\lambda = 0$}  
	 	\State {$P_T \gets P_T \setminus \{\lambda\}$.}
	\EndIf
\EndFor
\State{Output $\Par(k,T) = P_T$, and for each $\lambda \in \Par(k,T)$, output $\mult_{\mathbb{S}^\lambda}(\Psi_T^{(k)}) = m_\lambda$.}

\end{algorithmic}
\end{algorithm}

\begin{proof}[Proof of correctness of Algorithm~\ref{alg:mult}]
The correctness of the algorithm follows from 
Corollary~\ref{cor:Solomon-to-Specht}, 
and Lemma~\ref{lem:Pieri}.
\end{proof}

\begin{proof}[Complexity Analysis of Algorithm~\ref{alg:mult}]
Let $F(k,n)$ denote the maximum of the complexity of the algorithm over all inputs ($k, T)$,  where $\card(T) = n$.
Then, $F(k,n)$ is also an upper bound on the 
cardinality of the set $\Par(k,T)$ produced in the 
output of the algorithm.
First consider the recursive call to the algorithm in Line \ref{line:alg:mult:call1}. 
The complexity of computing $\Par(q,Q')$ as well as the cardinality of the set $\Par(q,Q')$ is bounded by
$F(q,n-1) \leq F(k-1,n-1)$.
Also observe that for each $\mu$  belonging to the output $\Par(q,Q')$ of this recursive call 
$\length(\mu) \leq \card(Q')+1 \leq n$, which is a consequence of 
Proposition~\ref{prop:Solomon}.
The cardinality of the set $S(\mu,k)$ is bounded by $O(k^{\length(\mu)}) = O(k^n)$
(using Lemma~\ref{lem:Pieri-quantitative}).
The complexity of 
computing $S(\mu,k)$ is also bounded by $k^{O(n)}$.
Thus the total cost of the `\textbf{for}'  loop in Line \ref{line:alg:mult:call2} is bounded by 
$ k^{C n} F(k-1,n-1)$ for a large enough constant $C > 0$.
The cost of the recursive call  in Line \ref{line:alg:mult:call3} is bounded by $ F(k,n-1)$, and the cost of
the `\textbf{for}' loop in Line \ref{line:alg:mult:call4} is bounded by $C F(k,n-1)$
for a large enough constant $C > 0$.
Thus the function $F(k,n)$ satisfies the following inequalities for large enough constants $C, C' > 0$:
\begin{eqnarray*}
F(k,0) &\leq & C,\\
F(2,\cdot) &\leq & C ,\\
F(k,n) &\leq & k^{C n}F(k-1,n-1) + C F(k,n-1) \\
&\leq&  k^{C' n} F(k,n-1).
\end{eqnarray*}

It follows from the above inequalities that there exists some constant $C'' > 0$ such that
\[
F(k,n) \leq k^{C'' n^2} .
\]
Thus the complexity of Algorithm~\ref{alg:mult} is bounded by $k^{O(\card(T)^2)}$.
\end{proof}

We summarize the above in the following proposition.

\begin{proposition}
\label{prop:alg:mult}
Algorithm \ref{alg:mult} is correct and has complexity, measured by the number of arithmetic operations
in $\Z$,  bounded by  
$k^{O(\card(T)^2)}$.
Moreover, the cardinality of the set $\Par(k,T)$  output is also bounded by
$k^{O(\card(T)^2)}$.
\end{proposition}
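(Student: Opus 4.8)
The plan is to analyze Algorithm~\ref{alg:mult} by induction on the pair $(k, \card(T))$ ordered lexicographically, mirroring the double induction already used in the proof of Proposition~\ref{prop:Solomon}. For correctness, the base cases ($T = \emptyset$, where $\Psi^{(k)}_\emptyset \cong \mathbb{S}^{(k)}$ by \eqref{eqn:eg:1}, and $k = 2$) are handled directly by the algorithm and are immediate. For the inductive step, after the harmless relabelling of $T$ (which replaces $T$ by a ``shifted'' copy so that $s_{k-1} \in T$; this does not change $\Psi^{(k)}_T$ up to isomorphism since it amounts to conjugating by a permutation), we set $T' = T \setminus \{s_{k-1}\}$ and invoke the Solomon recursion \eqref{eqn:prop:Solomon:1}:
\[
\ind_{\mathfrak{S}_{k-1}}^{\mathfrak{S}_k}(\Psi_{T'}^{(k-1)}) \cong \Psi^{(k)}_{T} \oplus \Psi^{(k)}_{T'}.
\]
Thus $\Psi^{(k)}_T \cong \ind_{\mathfrak{S}_{k-1}}^{\mathfrak{S}_k}(\Psi_{T'}^{(k-1)}) \ominus \Psi^{(k)}_{T'}$ as virtual modules, but since both sides are genuine (non-virtual) and the multiplicities of $\Psi^{(k)}_T$ are the differences, all intermediate multiplicities stay nonnegative. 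The algorithm computes the isotypic decomposition of $\ind_{\mathfrak{S}_{k-1}}^{\mathfrak{S}_k}(\Psi_{T'}^{(k-1)})$ from that of $\Psi_{T'}^{(k-1)}$ using Pieri's rule (Lemma~\ref{lem:S-et-T} / \eqref{eqn:prop:Solomon:3}): each $\mathbb{S}^\mu$ contributes $\bigoplus_{\lambda \in S(\mu)} \mathbb{S}^\lambda$, so one accumulates $m_\lambda \mathrel{+}= \mult_{\mathbb{S}^\mu}(\Psi_{T'}^{(k-1)})$ over all $\lambda \in S(\mu)$ — exactly the first \textbf{for}-loop. The second \textbf{for}-loop then subtracts $\mult_{\mathbb{S}^\lambda}(\Psi_{T'}^{(k)})$, obtained by a recursive call with the same $k$ but smaller $\card(T')$. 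Both recursive calls are on inputs strictly smaller in the lexicographic order, so the induction hypothesis applies; correctness follows by combining the two subroutine outputs according to the displayed identity, discarding any $\lambda$ whose multiplicity has dropped to $0$.

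For the complexity bound, let $n = \card(T) + 1$ and note that $\length(\lambda) \le n$ for every $\lambda$ occurring (Proposition~\ref{prop:Solomon}), so the partitions that ever appear have at most $n$ parts; the number of such partitions of $k$ is at most $\binom{k + n - 1}{n - 1} \le (O(k/n))^{n-1} \cdot \text{(small factor)}$, which one bounds crudely by $k \cdot (O(n))^n$ — actually it is cleaner to bound the count of \emph{relevant} partitions by observing that a partition with $\le n$ parts is determined by a weakly decreasing $n$-tuple summing to $k$, and to run the whole recursion only over such tuples. The recursion tree: unrolling the recurrence $R(k, t) \le R(k-1, t-1) + R(k, t-1) + (\text{work per node})$, with work per node being $O(\card(\Par))$ for the Pieri expansion (each $S(\mu)$ has size $\le \length(\mu) + 1 \le n + 1$ and we iterate over $\le \card(\Par(k-1,T'))$ many $\mu$), one sees the tree has depth $\le t \le n - 1$ in the $t$-direction and $\le k$ in the $k$-direction, giving at most $k \cdot 2^{n}$ leaves; multiplying by the per-node cost $k (O(n))^n$ (dominated by the size of the partition sets) and by the arithmetic on the integers $m_\lambda$ (each bounded by $\dim \Psi^{(k)}_T \le k!$, whose manipulation is $O(1)$ arithmetic operations in $\Z$ under our cost model) yields the stated bound $k (O(n))^n$, after absorbing polynomial-in-$k$ factors into the $(O(n))^n$ term when $n \ge \log k$, and noting that when $n < \log k$ the partition count $k(O(n))^n$ already dominates. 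The bound on $\card(\Par(k,T))$ is the partition-count estimate above.

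I expect the main obstacle to be the \emph{bookkeeping of the complexity recursion}: one must be careful that the two recursive calls (on $(k-1, t-1)$ and on $(k, t-1)$) together with the Pieri-expansion overhead do not blow up super-polynomially in $k$, which forces the observation that only partitions of length $\le \card(T)+1$ ever enter, so that every partition set manipulated has size $k^{O(n)}$ rather than the full (exponentially large) partition number $p(k)$. A secondary subtlety is justifying that each arithmetic step is a single operation in $\Z$ (rather than, say, costing $\mathrm{poly}(k)$ bit-operations) — but this is exactly the convention fixed in the Definition of complexity, so it requires only a remark. The relabelling-of-$T$ step also needs a one-line justification that $\Psi^{(k)}_T \cong \Psi^{(k)}_{\sigma T \sigma^{-1}}$ for the appropriate $\sigma$, which follows directly from the definition \eqref{eqn:def-of-Psi} since conjugation is an automorphism of $\mathfrak{S}_k$ preserving $\card(\mathfrak{S}^J_k)$ and $\ell(\cdot)$ up to the relevant symmetry.
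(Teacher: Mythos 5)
Your complexity analysis does not actually reach the stated bound, and the reason is precisely the idea that the paper's proof turns on. You bound the partition sets manipulated by the algorithm by the number of \emph{all} partitions of $k$ with at most $n$ parts, and assert $\binom{k+n-1}{n-1} \leq k\,(O(n))^n$; this inequality is false (already for $n=3$ the left side grows like $k^2$ while the right side is linear in $k$), and the correct count $k^{\Theta(n)}$ only yields an overall bound of the form $k^{O(n)}$, not $k\,(O(n))^n$, and gives no valid bound on $\card(\Par(k,T))$ either. The paper instead controls the \emph{output} size recursively: $\Par(k,T) \subseteq \bigcup_{\mu \in \Par(k-1,T')} S(\mu)$ with $\card(S(\mu)) \leq \length(\mu)+1 \leq n+1$, so the partition set grows by a factor at most $n+1$ per level of the recursion starting from a single partition at the base case, i.e.\ $\card(\Par(k,T)) = (O(n))^n$ rather than "all length-$\leq n$ partitions". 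Concretely, the paper lets one function $F(k,n)$ majorize both the running time and $\card(\Par(k,T))$, derives $F(k,n) \leq C\bigl(n\,F(k-1,n-1) + F(k,n-1) + k\bigr)$ from the two recursive calls and the Pieri loop, and solves this to get $F(k,n) \leq k\,(C'n)^n$. Without this joint recursive bound on the output cardinality your per-node cost estimate, and hence your total, is off by a factor polynomial in $k$ of degree growing with $n$. (Your accounting of the recursion tree, "$k\cdot 2^n$ leaves" plus the absorption of $k$ into $(O(n))^n$, is also muddled, but that part is repairable; the partition-count claim is not.)

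A second gap is your one-line justification of the relabelling step: you claim $\Psi^{(k)}_T \cong \Psi^{(k)}_{\sigma T \sigma^{-1}}$ for a suitable $\sigma$, but since the definition \eqref{eqn:def-of-Psi} involves both $T$ and its complement in $\Coxeter(k)$, such a $\sigma$ would have to normalize the whole set $\Coxeter(k)$, and the only permutations doing so induce the identity or the reversal $s_i \mapsto s_{k-i}$ of the Coxeter diagram; a general shift $s_i \mapsto s_{i+c}$ is not of this form. In fact the modules are genuinely not shift-invariant: using Solomon's direct-sum decomposition of $\Q[\mathfrak{S}_4]$ together with \eqref{eqn:eg:3}, \eqref{eqn:eg:4} and \eqref{eqn:transpose}, one gets $\dim_\Q \Psi^{(4)}_{\{s_2\}} = 5$ whereas $\Psi^{(4)}_{\{s_3\}} \cong \mathbb{S}^{(3,1)}$ has dimension $3$. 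So this step of your correctness argument cannot be justified by conjugation and needs a different treatment (note that the paper's own proof of the proposition is silent on the relabelling line and addresses only the complexity recurrence).
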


\begin{proof}
Follows from the proof of correctness and the complexity analysis of Algorithm~\ref{alg:mult} given previously.
\end{proof}

\subsection{The pair $\left(\widetilde{S^{(T)}_k},\widetilde{S_k^T}\right)$ and its properties}
\label{subsec:tilde}
In this section we define the pair $\left(\widetilde{S^{(T)}_k},\widetilde{S_k^T}\right)$, and prove its key property.

\begin{notation}
For any finite set $T$ and $s \in T$, 
we denote by $\Delta_T \subset \R^T$, the standard simplex in $\R^T$. In other words, 
$\Delta_T$ is  the convex hull of the points $(e_s)_{s \in T}$, 
where $e_s$ is defined by $\pi_t(e_s) = \delta_{s,t}$ where 
for each $t \in T$,  $\pi_t:\R^T \rightarrow \R$ is the projection map on to the $t$-th coordinate. 
For $T' \subset T$, we denote by $\Delta_{T'}$, the convex hull of the points $(e_s)_{s \in T'}$, and call
$\Delta_{T'}$ the face of $\Delta_T$ corresponding to the subset  $T'$.
\end{notation}

\begin{definition}
\label{def:poset-compcat}
Let $k\in \Z_{\geq 0}$, and  $\lambda,\mu \in \Comp(k)$. We denote, $\lambda \prec \mu$, if $\Weyl_{\lambda} \subset \Weyl_{\mu}$.
It is clear that $\prec$ is a partial order on $\Comp(k)$ making $\Comp(k)$ into a poset.
\end{definition}

In the following paragraph we introduce notation two denote certain special subsets of $\Comp(k)$.
Their significance will be clear from
the proposition that follows immediately.
\begin{notation}
For $\lambda = (\lambda_1,\ldots,\lambda_\ell) \in \Comp(k)$, we denote
$\length(\lambda) = \ell$,
and
for $k,d \in \Z_{\geq 0}$,
we denote 
\begin{eqnarray*}
\CompMax(k,d)  &=& \{\lambda=(\lambda_1,\ldots,\lambda_d) \in \Comp(k) \; \mid \;
\lambda_{2i+1} =1, 0 \leq i < d/2 \},\\
\CompMin(k,d)  &=& \{\lambda=(\lambda_1,\ldots,\lambda_d) \in \Comp(k) \; \mid \;
\lambda_{2i} =1, 0 < i \leq d/2 \}.
\end{eqnarray*}
\end{notation}

We state the following  important theorem due to Arnold \cite{Arnold} 
which has been referred to in Example~\ref{eg:V-4-3}. It plays a key role in the proof of 
Proposition~\ref{prop:arnold} below. Since we refer the reader to \cite{BC-selecta} for the proof of
Proposition~\ref{prop:arnold}, we do not use Theorem~\ref{thm:arnold} subsequently in this paper.

\begin{theorem}\label{thm:arnold}\cite[Theorems 5, 6 and 7]{Arnold}
For every $\w \in \R_{\geq 0}^k$, $d,k \geq 0$, 
$d'  = \min(k,d)$,
and $\y \in \R^{d'}$
the function $p_{\w,d+1}^{(k)}$ has exactly one local maximum on
$(\Psi^{(k)}_{\w,d})^{-1}(\y)$,
which furthermore depends continuously on $\y$.

Moreover,  a point $\x \in V_{\w,\y}  \cap \Weyl^{(k)}$ is a local maximum if and only if $\x \in \Weyl_\lambda^{(k)}$ for some 
$\lambda \in \CompMax(k,d')$.
Similarly, 
a point $\x \in V_{\w,\y}  \cap \Weyl^{(k)}$ is a local minimum if and only if $\x \in \Weyl_\lambda^{(k)}$ for some 
$\lambda \in \CompMin(k,d')$.
\end{theorem}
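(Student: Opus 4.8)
The plan is to reduce the statement to classical critical-point theory, exploiting the very special algebraic form of the power sums. Write $W=Z_{\w,d,\y}^{(k)}=(\Psi^{(k)}_{\w,d})^{-1}(\y)$; by Proposition~\ref{prop:agk} this is empty, a point, or the closed ball of dimension $k-d$, and (assuming $\w\in\R_{>0}^k$ and $d\geq 2$, so the ambient Vandermonde variety is a compact intersection of an ellipsoid with hyperplanes) $W$ is compact, so $p_{\w,d+1}^{(k)}$ attains a maximum on it; the remaining cases of $\w$ and of small $d$ are degenerate and would be handled separately. First I would locate the critical points of $p_{\w,d+1}^{(k)}$ on $W$. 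On the open stratum $W\cap\Weyl^{(k),o}$, where $W$ is a smooth $(k-d)$-manifold, the Lagrange condition at a critical point $\x$ gives multipliers $\mu_1,\dots,\mu_d$ with $(d+1)x_i^d=\sum_{m=1}^{d}\mu_m m\,x_i^{m-1}$ for every $i$; the right-hand side is a fixed degree-$d$ polynomial in the variable $x_i$, so its roots, and hence the coordinates of $\x$, take at most $d$ distinct values, forcing $k\leq d$. Thus for $d<k$ there is no interior critical point, and the same computation carried out on each face $\Weyl_\mu^o$ — where $p_{\w,m}^{(k)}$ restricts to a weighted power sum $\sum_j\omega_j t_j^m$ in the cluster values $t_j$ with positive merged weights $\omega_j$ — shows that every critical point of $p_{\w,d+1}^{(k)}$ on $W$ lies on a stratum $\Weyl_\mu^o$ with $\length(\mu)\leq d$.

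Second, I would carry out the second-order analysis that singles out the local maxima among these critical points. The key local model: if $\x\in\Weyl_\mu^o$ is a critical point, with the $j$-th cluster sitting at value $t_j$, then tangent directions to $W$ transverse to $\Weyl_\mu$ amount to ``splitting'' clusters, and because the defining equations are power sums the second differential of $p_{\w,d+1}^{(k)}$ along such a splitting is governed by the local behaviour near $t_j$ of the one-variable polynomial $g(t)=(d+1)t^d-\sum_{m=1}^{d}\mu_m m\,t^{m-1}$, whose real roots contain $t_1<\dots<t_\ell$. A singleton cluster cannot be split at all, while a cluster of size $\geq2$ can, and for $\x$ to be a local maximum every such splitting must fail to increase $p_{\w,d+1}^{(k)}$; working out which positions along $t_1<\dots<t_\ell$ a ``splittable'' cluster may occupy yields exactly the alternation built into $\CompMax(k,d')$ (parts in odd positions forced to equal $1$). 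Passing to sub-compositions $\lambda\prec\lambda'$ to account for lower boundary strata then identifies the set of local maxima of $p_{\w,d+1}^{(k)}$ on $W$ with the points of $W$ lying in $\Weyl_\lambda$ for some $\lambda\in\Comp(k,d')$, which is the ``if and only if'' assertion.

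Third, for the uniqueness of the local maximum I would combine the contractibility of $W$ (Kostov, \cite[Theorem 1.1]{Kostov}) with the explicit description of the critical locus, rather than a generic Morse-theoretic black box (which is unavailable since $W$ is merely a regular cell near its boundary). Concretely, I would show that the superlevel sets $\{\x\in W\mid p_{\w,d+1}^{(k)}(\x)\geq c\}$ stay connected, using that every critical value strictly below the global maximum is realised at a point admitting a transverse splitting direction along which $p_{\w,d+1}^{(k)}$ increases, so no separating ``saddle'' can occur; contractibility of $W$ then forbids a second local maximum. Continuity of the (now unique) maximizer in $\y$ follows from a compactness argument: where $W$ varies continuously and stays smooth the maximizer is the implicit-function continuation of a nondegenerate maximum, and at the degenerations where $\dim W$ drops, uniqueness forces any limit point of maximizers to be the maximizer of the limiting fibre.

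The hard part will be the second-order analysis and, in particular, extracting the precise combinatorial condition $\Comp(k,d')$: one must handle simultaneous splittings of several clusters, track how the multipliers $\mu_m$ constrain the multiplicities of the roots of $g$, and treat critical points on the non-manifold boundary of $W$ using the local conic structure theorem \cite[Theorem 9.3.6]{BCR}. The uniqueness statement is delicate for the same reason, so the cleanest route is to lean on Kostov's contractibility together with the critical-locus description rather than on smooth Morse theory. Since a full proof already appears in Arnold's work, in practice I would cite \cite[Theorem 7]{Arnold} and reproduce only the Lagrange-multiplier observation, which is the part actually used in Example~\ref{eg:V-4-3} and in the proof of Proposition~\ref{prop:arnold}.
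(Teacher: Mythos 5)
The paper does not prove this statement at all: it is stated, in weighted form, with a citation to \cite[Theorem 7]{Arnold}, and the authors explicitly remark that, since the proof of Proposition~\ref{prop:arnold} is deferred to \cite{BC-selecta}, Theorem~\ref{thm:arnold} is not even used subsequently in the paper. So your closing decision --- cite Arnold and retain only the Lagrange-multiplier observation --- coincides exactly with the paper's treatment; there is no in-paper proof to compare your sketch against.

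Judged as a self-contained proof, however, the sketch has genuine gaps. The first-order step (each coordinate of a critical point is a root of a fixed degree-$d$ polynomial, hence at most $d$ clusters, and no interior critical points when $d<k$) is sound, but the two substantive claims are announced rather than proved. First, the second-order/combinatorial step --- that analysing cluster splittings yields precisely the alternation $\lambda_{2i+1}=1$ defining $\CompMax(k,d')$, uniformly over simultaneous splittings of several clusters and over boundary strata where the fibre is not a manifold --- is the entire content of the ``if and only if'' assertion, and you defer it as ``the hard part.'' Second, the uniqueness argument is not correct as stated: contractibility of the fibre does not forbid two local maxima (a disk carries functions with many), and the existence at each non-maximal critical point of one splitting direction along which $p_{\w,d+1}^{(k)}$ increases does not imply that superlevel sets remain connected --- a mountain-pass saddle also admits increasing directions; what must be excluded is a critical point whose superlevel set is locally disconnected, and on a stratified set with corners this requires stratified Morse-type input or the finer analysis actually carried out by Arnold, Giventhal and Kostov. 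Finally, the compactness reduction assumes $\w\in\R_{>0}^k$ and $d\geq 2$, whereas the statement is asserted for $\w\in\R_{\geq 0}^k$ and all $d,k\geq 0$; these cases are waved away. None of this is fatal --- it is the classical route --- but as written the proposal is an outline plus a citation, and the citation is what carries the result, exactly as in the paper.
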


Note that as already noted in \cite{BC-selecta} there is a slight inaccuracy  in 
\cite[Theorem 7]{Arnold} in that the word “minimum” should be replaced by the word “maximum” and vice versa. 
A correct statement and a more detailed proof can be found in 
\cite{Meguerditchian} (Proposition 8). \\

We need some more notation.

\begin{notation}
\label{not:emb}
For $\lambda = (\lambda_1,\ldots,\lambda_\ell)  \in \Comp(k)$, we denote by 
$\iota_\lambda: \Weyl^{(\ell)} \rightarrow \Weyl^{(k)}$ the embedding that takes
$(y_1,\ldots,y_\ell) \in \Weyl^{(\ell)}$ to the point
$(\underbrace{y_1,\ldots,y_1}_{\lambda_1},\ldots, \underbrace{y_\ell,\ldots,y_\ell}_{\lambda_\ell})$.
\end{notation}

\begin{notation}
\label{not:Weyl-T}
We denote by 
\[
\Weyl^{(k)}_d = \bigcup_{\lambda \in \CompMax(k,d)} \Weyl_\lambda.
\]
For $T \subset \Coxeter(k)$ and $d \geq 0$, we denote: 
\begin{eqnarray*}
\Weyl^{(k)}_{T,d}  &=& \iota_{\lambda(T)} (\Weyl^{(\length(\lambda(T)))}_d).
\end{eqnarray*}
\end{notation}

\begin{definition}
\label{def:S-et-al}
For any semi-algebraic set $S \subset \R^k$, $T \subset \Coxeter(k)$, and $d \geq 0$, we set
\begin{eqnarray*}
S_{k} &=& S \cap \Weyl^{(k)}, \\
S_{k,d} &=& S \cap \Weyl^{(k)}_d, \\
S_k^T &=& \Weyl^{(k,T)} \cap S, \\
S_{k,T} &=& \Weyl^{(k)}_T \cap S, \\
S_{k,T,d} &=& S \cap \Weyl^{(k)}_{T,d}. \\
\end{eqnarray*}
\end{definition}

\begin{proposition}\label{prop:arnold}
Let $1 < d$, and $\mathcal{P} \subset \R[X_1,\ldots,X_k]^{\mathfrak{S}_k}_{\leq d}$,
$S \subset \R^k$,  a $\mathcal{P}$-closed and bounded semi-algebraic set, and
$\w \in \R_{>0}^k$. Then the  following holds.

\begin{enumerate}[1.]
\item
\label{itemlabel:prop:arnold:b}
The map $\Psi^{(k)}_{\w,d}$ restricted to $S_{k,d}$ is a semi-algebraic homeomorphism on to its image,
and 

\item
\label{itemlabel:prop:arnold:c}
$\Psi^{(k)}_{\w,d}(S_{k,d}) = \Psi^{(k)}_{\w,d}(S_{k})$.
\end{enumerate}
\end{proposition}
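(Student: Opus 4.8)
The key point is that $\Psi^{(k)}_{\w,d}$, restricted to the appropriate stratified piece of the Weyl chamber, is injective and its image already captures all of $\Psi^{(k)}_{\w,d}(S_k)$. Both statements are essentially consequences of Arnold's Theorem~\ref{thm:arnold}, combined with the stratification of $\Weyl^{(k)}$ by the faces $\Weyl_\lambda$ for $\lambda \in \Comp(k)$. Here is how I would organize it.

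First I would reduce to a single fiber. For $\y \in \R^{d'}$ (with $d' = \min(k,d)$) consider the intersection $S_k \cap (\Psi^{(k)}_{\w,d})^{-1}(\y)$, which is a closed semi-algebraic subset of the Vandermonde cell $Z^{(k)}_{\w,d,\y}$. By Proposition~\ref{prop:agk} this cell is either empty, a point, or semi-algebraically homeomorphic to a closed ball of dimension $k-d$, and — crucially — the proof of Proposition~\ref{prop:agk} together with \cite[Theorem 7]{Arnold} identifies where on this cell the function $p^{(k)}_{\w,d+1}$ attains its extrema: at points lying in faces $\Weyl_\lambda$ with $\lambda \in \Comp(k,d')$. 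To prove Part~\eqref{itemlabel:prop:arnold:b} I would argue that on $\Weyl^{(k)}_d = \bigcup_{\lambda \in \Comp(k,d)} \Weyl_\lambda$, the map $\y \mapsto$ (the unique extremal point of $p^{(k)}_{\w,d+1}$ on the fiber over $\y$) is a continuous section of $\Psi^{(k)}_{\w,d}$, hence $\Psi^{(k)}_{\w,d}$ restricted to $S_{k,d} = S \cap \Weyl^{(k)}_d$ is injective: two points of $S_{k,d}$ in the same fiber would both have to be the same unique local maximum of $p^{(k)}_{\w,d+1}$. Continuity of the inverse follows from the "depends continuously on $\y$" clause of Theorem~\ref{thm:arnold} together with properness, giving that it is a homeomorphism onto its image.

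For Part~\eqref{itemlabel:prop:arnold:c} the inclusion $\Psi^{(k)}_{\w,d}(S_{k,d}) \subseteq \Psi^{(k)}_{\w,d}(S_k)$ is trivial since $S_{k,d} \subseteq S_k$. For the reverse inclusion I would take $\y \in \Psi^{(k)}_{\w,d}(S_k)$, so the fiber $S_k \cap (\Psi^{(k)}_{\w,d})^{-1}(\y)$ is a nonempty closed subset of the compact cell $Z^{(k)}_{\w,d,\y}$; then $p^{(k)}_{\w,d+1}$ attains a maximum on this nonempty compact set at some point $\x$. The content is that $\x$ must in fact be the \emph{global} maximum of $p^{(k)}_{\w,d+1}$ over the whole cell — this is where I expect the argument to require the most care, and where one leans on the convexity/monotonicity structure underlying Arnold's and Kostov's results (a local maximum of $p^{(k)}_{\w,d+1}$ on $Z^{(k)}_{\w,d,\y}$ is unique and global, so a maximum over a closed subset that is not the global one would have to occur on the relative boundary of that subset inside the cell, and one pushes into lower-dimensional faces). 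Once $\x$ is the global extremal point, Theorem~\ref{thm:arnold} places $\x \in \Weyl_\lambda$ with $\lambda \in \Comp(k,d')$, hence $\x \in \Weyl^{(k)}_d$, so $\x \in S_{k,d}$ and $\y = \Psi^{(k)}_{\w,d}(\x) \in \Psi^{(k)}_{\w,d}(S_{k,d})$.

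**Main obstacle.** The genuinely delicate step is showing in Part~\eqref{itemlabel:prop:arnold:c} that for a point $\y$ in the image, one can always find a point of the \emph{fiber of $S$} (not just of the full Vandermonde cell) that sits in $\Weyl^{(k)}_d$; a priori the maximum of $p^{(k)}_{\w,d+1}$ restricted to the closed subset $S_k \cap (\Psi^{(k)}_{\w,d})^{-1}(\y)$ of the cell need not be the global maximum on the cell, so one has to use that $S$ is closed and that it is cut out by \emph{symmetric} polynomials of degree $\leq d$ — the latter forces $S$ to be a union of fibers of $\Phi^{(k)}_d$, so that $S_k \cap (\Psi^{(k)}_{\w,d})^{-1}(\y)$ is either empty or the \emph{entire} cell $Z^{(k)}_{\w,d,\y}$. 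With that observation (which is exactly the fundamental-theorem-of-symmetric-polynomials remark used in the proof of Theorem~\ref{thm:main1}), the maximum over the fiber-in-$S$ really is the global maximum on the cell, and the rest follows cleanly from Theorem~\ref{thm:arnold}. I would also note that, since this paper defers the proof to \cite{BC-selecta}, it suffices to sketch this reduction and cite the monotone-cell machinery of \cite{BGV2012} and \cite{Kostov} for the global-uniqueness claim.
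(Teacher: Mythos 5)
Your Part~1 is essentially correct, and it is worth saying that the paper itself offers no argument at all here: its proof is a one-line citation to the weighted version of Part (1) of Proposition 9 in \cite{BC-selecta}, so you are reconstructing a proof the paper omits. Deducing injectivity of $\Psi^{(k)}_{\w,d}$ on $\Weyl^{(k)}_d$ from Theorem~\ref{thm:arnold} (every point of a face $\Weyl_\lambda$, $\lambda \in \Comp(k,d')$, lying on a fiber is a local maximum of $p^{(k)}_{\w,d+1}$ there, and the local maximum is unique), and then getting the homeomorphism onto the image from the fact that $S_{k,d}$ is closed and bounded, is the right mechanism.

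The genuine gap is in Part~2, at exactly the step you flagged as the main obstacle. You argue that, since $S$ is cut out by symmetric polynomials of degree at most $d$, it is a union of fibers of the \emph{unweighted} map $\Phi^{(k)}_d$, and from this you conclude that $S_k \cap (\Psi^{(k)}_{\w,d})^{-1}(\y)$ is empty or the entire cell $Z^{(k)}_{\w,d,\y}$. That inference fails for $\w \neq 1^k$: a fiber of the weighted map is not contained in any fiber of the unweighted map, so being a union of unweighted fibers gives no control on the intersection with a weighted cell. Concretely, take $k=3$, $d=2$, $S = \ZZ(\{p^{(3)}_1,\, p^{(3)}_2 - 1\}, \R^3)$ and $\w=(1,1,2)$: here $\Weyl^{(k)}_d$ is the single face $X_1 \leq X_2 = X_3$, so $S_{k,d}$ is one point, whereas on the arc $S_k$ one has $p^{(3)}_{\w,1} = X_3$, which is non-constant, so $\Psi^{(3)}_{\w,2}(S_k)$ is a curve; the all-or-nothing claim (and indeed the equality of Part~2 under the hypotheses read literally) breaks down. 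The hypothesis the cited weighted version of \cite{BC-selecta} actually carries --- and the one satisfied in the place the weighted case is really invoked, namely Corollary~\ref{cor:arnold}, where the pulled-back set $\iota_{\lambda(T)}^{-1}(S_{k,T})$ is defined by polynomials in the weighted Newton sums with $\w = \lambda(T)$ --- is that $S$ be a union of fibers of the \emph{weighted} map $\Phi^{(k)}_{\w,d}$. With that hypothesis your argument closes: the fiber of $S_k$ over $\y$ is the whole compact cell, $p^{(k)}_{\w,d+1}$ attains its maximum there at a point of some $\Weyl_\lambda$ with $\lambda \in \Comp(k,d')$, hence in $\Weyl^{(k)}_d$, giving $\y \in \Psi^{(k)}_{\w,d}(S_{k,d})$. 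So you must either restrict to $\w = 1^k$ or strengthen the hypothesis to ``defined by polynomials in $p^{(k)}_{\w,1},\ldots,p^{(k)}_{\w,d}$'' before the all-or-nothing step; as written, the justification uses the wrong map.
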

\begin{proof}
Both parts 
follow from the weighted version of Part (1) of Proposition 9 in \cite{BC-selecta}.
\end{proof}

We have the following corollary of Proposition~\ref{prop:arnold} that we will need. 
\begin{corollary}
\label{cor:arnold} 
With the same hypothesis
as in Proposition~\ref{prop:arnold},
for each subset $T \subset \Coxeter(k)$,
 $\Psi^{(k)}_{d}$ restricted to $S_{k,T,d}$ is a semi-algebraic homeomorphism onto its image,
and 
\[
\Psi^{(k)}_{d}(S_{k,T}) = \Psi^{(k)}_{d}(S_{k,T,d}).
\]
\end{corollary}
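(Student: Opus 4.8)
The plan is to deduce Corollary~\ref{cor:arnold} from Proposition~\ref{prop:arnold} by restricting everything to the face $\Weyl^{(k)}_T$ and recognizing the resulting picture as a \emph{weighted} Vandermonde situation in a space of dimension $\length(\lambda(T))$. Write $\lambda = \lambda(T) = (\lambda_1,\ldots,\lambda_\ell) \in \Comp(k)$ with $\ell = \length(\lambda(T))$, and set $\w^{\ast} = (\lambda_1,\ldots,\lambda_\ell) \in \Z_{>0}^{\ell} \subset \R_{>0}^{\ell}$. By \eqref{eqn:implicit-lambda} and Notation~\ref{not:emb}, $\iota_\lambda$ restricts to a semi-algebraic homeomorphism $\Weyl^{(\ell)} \xrightarrow{\ \sim\ } \Weyl^{(k)}_T$, and by Notation~\ref{not:Weyl-T} it carries $\Weyl^{(\ell)}_d$ onto $\Weyl^{(k)}_{T,d}$. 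Extending $\iota_\lambda$ to the linear map $\widetilde{\iota}_\lambda : \R^{\ell} \to \R^{k}$ given by the same repeat-coordinates rule, one has the elementary identity $p_m^{(k)} \circ \widetilde{\iota}_\lambda = p_{\w^{\ast},m}^{(\ell)}$ for all $m \ge 1$, hence $\Phi_d^{(k)} \circ \widetilde{\iota}_\lambda = \Phi_{\w^{\ast},d}^{(\ell)}$ and $\Psi_d^{(k)} \circ \iota_\lambda = \Psi_{\w^{\ast},d}^{(\ell)}$ (with the evident truncation of the target when $\ell < d$).

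Next I would pull $S$ back, setting $S' = \widetilde{\iota}_\lambda^{-1}(S) \subset \R^{\ell}$. As $\widetilde{\iota}_\lambda$ is a closed linear embedding and $S$ is closed and bounded, $S'$ is closed and bounded; and since $\deg(P\circ\widetilde{\iota}_\lambda) \le \deg(P) \le d$, it is $\mathcal{P}'$-closed for $\mathcal{P}' = \{P\circ\widetilde{\iota}_\lambda : P \in \mathcal{P}\} \subset \R[Y_1,\ldots,Y_\ell]_{\le d}$. Exactly as in the proof of Theorem~\ref{thm:main1}, since $S$ is symmetric and cut out by symmetric polynomials of degree $\le d$ we have $S = (\Phi_d^{(k)})^{-1}(\Phi_d^{(k)}(S))$; combined with $\Phi_d^{(k)}\circ\widetilde{\iota}_\lambda = \Phi_{\w^{\ast},d}^{(\ell)}$ this gives $S' = (\Phi_{\w^{\ast},d}^{(\ell)})^{-1}(\Phi_d^{(k)}(S))$, so $S'$ is a union of fibres of $\Phi_{\w^{\ast},d}^{(\ell)}$. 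Transporting along $\iota_\lambda$ and using the notation of Definition~\ref{def:S-et-al} for $S'$ inside $\R^\ell$, we get $S_{k,T} = \iota_\lambda(S'_\ell)$ and $S_{k,T,d} = \iota_\lambda\big((S')_{\ell,d}\big)$, and correspondingly $\Psi_d^{(k)}(S_{k,T}) = \Psi_{\w^{\ast},d}^{(\ell)}(S'_\ell)$, $\Psi_d^{(k)}(S_{k,T,d}) = \Psi_{\w^{\ast},d}^{(\ell)}\big((S')_{\ell,d}\big)$, while $\Psi_d^{(k)}$ is a semi-algebraic homeomorphism on $S_{k,T,d}$ iff $\Psi_{\w^{\ast},d}^{(\ell)}$ is one on $(S')_{\ell,d}$.

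Thus the two claims of the corollary become exactly Parts~\eqref{itemlabel:prop:arnold:b} and \eqref{itemlabel:prop:arnold:c} of Proposition~\ref{prop:arnold} for the triple $(S', \R^\ell, \w^{\ast})$, and invoking that proposition would finish the argument. I expect the one genuine subtlety — and the main obstacle — to be that $S'$ need not be $\mathfrak{S}_\ell$-invariant, so Proposition~\ref{prop:arnold} is not literally applicable as stated. The way past it is to note that the proof of Proposition~\ref{prop:arnold}, through the weighted form of \cite[Proposition~9]{BC-selecta}, uses only that $S'$ is closed, bounded, of degree $\le d$, and saturated with respect to the weighted power-sum map $\Phi_{\w^{\ast},d}^{(\ell)}$ — all of which were checked above.

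Alternatively, one can argue directly from Theorem~\ref{thm:arnold}. The homeomorphism assertion reduces to the injectivity of $\Psi_{\w^{\ast},d}^{(\ell)}$ on $\Weyl^{(\ell)}_d$ (the uniqueness of the local maximum in Theorem~\ref{thm:arnold}) together with compactness of $(S')_{\ell,d}$, which upgrades a continuous bijection onto its image to a homeomorphism. The image assertion reduces, using that $S'$ is a union of fibres, to showing that every non-empty fibre $Z_{\w^{\ast},d,\y}^{(\ell)}$ meets $\Weyl^{(\ell)}_d$: by Proposition~\ref{prop:agk} such a fibre is compact, so $p_{\w^{\ast},d+1}^{(\ell)}$ attains its maximum on it, and by Theorem~\ref{thm:arnold} that maximum lies in some $\Weyl_\mu$ with $\mu \in \Comp(\ell, \min(\ell,d)) \subseteq \Comp(\ell,d)$, hence in $\Weyl^{(\ell)}_d$. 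A little bookkeeping is still needed to check the behaviour of the posets $\Comp(\cdot)$ and the faces $\Weyl_\mu$ under the composition $\widetilde{\iota}_\lambda$, and to dispose of the degenerate cases $\ell \le d$, but these steps are routine.
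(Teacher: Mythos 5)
Your main argument coincides with the paper's own proof: pull back along $\iota_{\lambda(T)}$, identify $\Psi^{(k)}_d|_{S_{k,T}}$ with the weighted map $\Psi^{(\ell)}_{\lambda(T),d}$ (weights $\lambda(T)$), and invoke Proposition~\ref{prop:arnold}, so the proposal is correct and essentially the same route. The non-symmetry subtlety you flag is present (and equally glossed over) in the paper's one-line proof, since what is really used is the weighted form of \cite[Proposition 9]{BC-selecta} for sets saturated under the weighted power-sum map, and your fallback argument via Theorem~\ref{thm:arnold} and Proposition~\ref{prop:agk} is a sound way to close that gap.
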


\begin{proof}
Let $\ell = \length(\lambda(T))$, and $S'_{\ell} = \iota_{\lambda(T)}^{-1}(S_{k,T})$ (cf. Notation~\ref{not:emb}). Then, 
\[
S_{k,T,d} = \iota_{\lambda(T)}(S'_{\ell, d}),
\] 
and  
\[
\Psi^{(k)}_{d}|_{S_{k,T}} = \Psi^{(\ell)}_{\lambda(T),d} \circ \iota_{\lambda(T)}^{-1}.
\]
The corollary now follows from Proposition~\ref{prop:arnold}, and the fact that $\iota_{\lambda(T)}$ is a semi-algebraic homeomorphism on to its image.
\end{proof}

Now, let $1 < d$, and $\mathcal{P} \subset \R[X_1,\ldots,X_k]^{\mathfrak{S}_k}_{\leq d}$,
$S \subset \R^k$,  a $\mathcal{P}$-closed and bounded semi-algebraic set, and $T \subset \Coxeter(k)$.

\begin{definition}
\label{def:tilde}
\[
\widetilde{S^{(T)}_k} = \Psi^{(k)}_{d}(S_{k}) \times \Delta_T \subset \R^d \times \R^T,
\]
 and
\[
\widetilde{S_k^T}  = \bigcup_{T' \subset T} \Psi^{(k)}_{d}(S_{k,T'}) \times \Delta_{T'} \subset \widetilde{S^{(T)}_k}.
\]
\end{definition}

The key property of the pair $\left(\widetilde{S^{(T)}_k}, \widetilde{S_k^T}\right)$ defined above  that will be used later
is the following.

Using the definitions given above we have:
\begin{proposition}
\label{prop:alg}
\[
\HH^*\left(\widetilde{S^{(T)}_k}, \widetilde{S_k^T}\right)\cong \HH^*(S_k,S_k^T).
\]
\end{proposition}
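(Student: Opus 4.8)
The plan is to exhibit a deformation retraction (or a filtered chain of homotopy equivalences) from each side of the pair $(\widetilde{S^{(T)}_k},\widetilde{S_k^T})$ onto a space whose cohomology manifestly agrees with that of $(S_k,S_k^T)$, and then invoke excision/long exact sequence naturality. Concretely, the crucial input is Proposition~\ref{prop:arnold} together with Corollary~\ref{cor:arnold}: for each $T' \subset T$, the map $\Psi^{(k)}_d$ restricted to $S_{k,T',d}$ is a semi-algebraic homeomorphism onto its image $\Psi^{(k)}_d(S_{k,T'})$, and moreover $\Psi^{(k)}_d(S_{k,T'}) = \Psi^{(k)}_d(S_{k,T',d})$. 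So I first note that $S_{k,T'}$ is \emph{semi-algebraically contractible onto} the `small' slice $S_{k,T',d}$ — more precisely, the fibers of $\Psi^{(k)}_d|_{S_{k,T'}}$ over points of $\Psi^{(k)}_d(S_{k,T'})$ are Vandermonde varieties (hence contractible, by Proposition~\ref{prop:agk} / Kostov), and the slice $S_{k,T',d}$ meets each such fiber in exactly one point by the homeomorphism statement. This gives a fiberwise contraction $S_{k,T'} \simeq S_{k,T',d} \cong \Psi^{(k)}_d(S_{k,T'})$, compatible with inclusions as $T'$ varies.

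The second step is to assemble these fiberwise contractions compatibly over the poset of subsets $T' \subset T$. The space $\widetilde{S^{(T)}_k} = \Psi^{(k)}_d(S_k) \times \Delta_T$ is just a product with a contractible simplex, so it deformation retracts onto $\Psi^{(k)}_d(S_k) \times \{b_T\}$ for any interior point $b_T$, and $\Psi^{(k)}_d(S_k) \cong S_k$ is \emph{not} quite right — rather $S_k$ maps to $\Psi^{(k)}_d(S_k)$ with contractible (Vandermonde) fibers, so by the Vietoris–Begle / Leray argument (the proper map $\Psi^{(k)}_d|_{S_k}$ has acyclic fibers, hence induces an isomorphism on cohomology) we get $\HH^*(\widetilde{S^{(T)}_k}) \cong \HH^*(\Psi^{(k)}_d(S_k)) \cong \HH^*(S_k)$. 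The subspace $\widetilde{S_k^T} = \bigcup_{T' \subset T} \Psi^{(k)}_d(S_{k,T'}) \times \Delta_{T'}$ has the structure of a homotopy colimit over the poset of nonempty subsets $T' \subset T$ of the diagram $T' \mapsto \Psi^{(k)}_d(S_{k,T'})$, the simplices $\Delta_{T'}$ serving precisely as the coherence data; on the other hand $S_k^T = \bigcup_{s \in T} S_{k,s}$ is the union over the same poset of the subspaces $S_{k,T'} = \bigcap_{s\in T'} S_{k,s}$. Since each $S_{k,T'} \to \Psi^{(k)}_d(S_{k,T'})$ is a cohomology isomorphism (acyclic fibers again) and these isomorphisms commute with all the structure maps of the diagram, a Mayer–Vietoris spectral sequence comparison (or a nerve-lemma style argument) yields $\HH^*(\widetilde{S_k^T}) \cong \HH^*(S_k^T)$, naturally with respect to the inclusion into the ambient space. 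Then the five lemma applied to the long exact sequences of the two pairs gives $\HH^*(\widetilde{S^{(T)}_k},\widetilde{S_k^T}) \cong \HH^*(S_k,S_k^T)$.

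The main obstacle I anticipate is making the homotopy-colimit/gluing argument rigorous in the semi-algebraic category: one must check that the maps $S_{k,T'} \to \Psi^{(k)}_d(S_{k,T'})$ glue to a map of diagrams, that the simplices $\Delta_{T'}$ really do encode the correct incidence relations (i.e.\ $\Delta_{T''} \subset \Delta_{T'}$ when $T'' \subset T'$, matched with $\Psi^{(k)}_d(S_{k,T'}) \subset \Psi^{(k)}_d(S_{k,T''})$), and that the resulting comparison of (semi-algebraic) Mayer–Vietoris spectral sequences converges and is an isomorphism on $E_1$. An alternative, perhaps cleaner, route is to build a single semi-algebraic map $\widetilde{S_k^T} \to S_k^T$ — on the stratum $\Psi^{(k)}_d(S_{k,T'}) \times \Delta^o_{T'}$ send $(\y,b)$ to the unique point of $S_{k,T',d}$ over $\y$ (using Corollary~\ref{cor:arnold}), check these agree on overlaps, extend to a map $\widetilde{S^{(T)}_k} \to S_k$ in the same way, and verify this map of pairs is a cohomology isomorphism by the proper-base-change/acyclic-fiber criterion applied stratum by stratum. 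I would write the proof along these lines, citing Proposition~\ref{prop:arnold}, Corollary~\ref{cor:arnold}, Proposition~\ref{prop:agk}, and the Vietoris–Begle theorem (or the Leray spectral sequence with constant coefficients along a proper map with acyclic fibers, as already used in the proof of Theorem~\ref{thm:main1}).
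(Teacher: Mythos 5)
Your main route is essentially the paper's argument: the paper also reduces everything to the contractibility of the fibers (weighted Vandermonde varieties intersected with Weyl chambers, via Proposition~\ref{prop:agk}), a Vietoris--Begle type statement, and the five lemma applied to the long exact sequences of the two pairs. The one ingredient you leave as an ``anticipated obstacle'' is precisely what the paper supplies: since there is no natural continuous map in either direction between $S_k^T$ and $\widetilde{S_k^T}$ (a point of a multiple intersection $S_{k,T'}$ has no canonical barycentric coordinate in $\Delta_{T'}$), the comparison is routed through an explicit intermediate object, the blow-up complexes $\Bl(\mathcal{A}) \subset \Bl(\mathcal{B})$ for $\mathcal{A}=(S_{k,\{s\}})_{s\in T}$, $\mathcal{B}=(S_k)_{s\in T}$ (your ``homotopy colimit of the $S_{k,T'}$''). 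These map on the one hand to $(S_k,S_k^T)$ by collapsing the simplex factors (contractible fibers $\Delta_{J(x)}$, so a cohomology isomorphism by Lemma~\ref{lem:hocolim}), and on the other hand to $(\widetilde{S^{(T)}_k},\widetilde{S_k^T})$ by applying $\Psi^{(k)}_d$ in the first coordinate (fibers are Vandermonde varieties inside Weyl chambers, hence contractible), and the five lemma finishes. So your sketch is correct in substance, and making it rigorous amounts to writing down exactly this intermediate space rather than a direct Mayer--Vietoris comparison of the two unions. (A small wording caution: the acyclic fibers are the intersections of Vandermonde varieties with Weyl chambers/faces, not the full Vandermonde varieties, which can be spheres; in your setting the fibers do lie in $\Weyl^{(k)}$, so the intended claim is fine.)

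Your proposed ``cleaner'' alternative, however, does not work as stated: the stratumwise rule sending $(\y,b)$ with $b\in\Delta^o_{T'}$ to the unique point of $S_{k,T',d}$ over $\y$ is not continuous across strata. If $T''\subsetneq T'$ and $b$ moves from $\Delta^o_{T'}$ into $\Delta^o_{T''}$, the prescribed image jumps from the point of $S_{k,T',d}$ over $\y$ to the point of $S_{k,T'',d}$ over $\y$; these lie in the different slices $\Weyl^{(k)}_{T',d}$ and $\Weyl^{(k)}_{T'',d}$ and are in general distinct points of the fiber, so ``agreeing on overlaps'' fails. This is exactly why the comparison map must go \emph{from} the blown-up original space \emph{to} the tilde pair (quotienting along $\Psi^{(k)}_d$), rather than via a section $\widetilde{S_k^T}\rightarrow S_k^T$.
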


Before proving Proposition~\ref{prop:alg} we recall the notion of the \emph{blow-up complex} of a collection of 
closed and bounded semi-algebraic subsets of $\R^N$.

\begin{definition}[Blow-up complex]
\label{def:blow-up}
Given a finite family $\mathcal{A} = (A_\alpha)_{\alpha \in I}$ of closed and bounded 
semi-algebraic subsets of $\R^N$, we
denote 
\[
\Bl(\mathcal{A}) =  \coprod_{J \subset I} A_J \times \Delta_J/\sim,
\]
where for $J \subset I$, $A_J = \bigcap_{\alpha \in J} A_\alpha$, and $\Delta_J$ is the face of the standard
simplex $\Delta_I \subset \R^I$ (i.e. $\Delta_J = \{ (x_\alpha)_{\alpha \in I} \in \Delta_I \mid  \forall (\alpha \not\in J)  x_\alpha = 0\}$, and $\sim$ is the obvious identification.
\end{definition}

It is an easy consequence of the Vietoris-Begle theorem (see for example \cite[page 344]{Spanier})
that (using the same notation as in Definition~\ref{def:blow-up}) 
the map
\[
\pi: \Bl(\mathcal{A})   \rightarrow  A = \bigcup_{\alpha} A_\alpha, \pi (x;t) = x,
\] 
induces isomorphism between the corresponding cohomology groups.

Moreover, 
if $\mathcal{B} = (B_\alpha)_{\alpha \in I}$ is another family of closed and bounded semi-algebraic sets,
such that for each $\alpha \in I$, $A_\alpha \subset B_\alpha$, then there is an obvious inclusion
$\Bl(\mathcal{A}) \hookrightarrow \Bl(\mathcal{B})$, and we have a commutative diagram,
\[
\begin{tikzcd}
\Bl(\mathcal{A}) \arrow[hookrightarrow]{r}\arrow{d}{\pi} & \Bl(\mathcal{B})\arrow{d}{\pi} \\
A=\bigcup_\alpha A_\alpha \arrow[hookrightarrow]{r} & B=\bigcup_\alpha B_\alpha
\end{tikzcd},
\]
where the horizontal arrows are inclusions. 
This gives a map between the pairs
$(\Bl(\mathcal{B}),\Bl(\mathcal{A})) \rightarrow (B,A)$.
(In particular, note that  if $B_\alpha = B$ for all $\alpha \in I$,
$\Bl(\mathcal{B}) = B \times \Delta_I$.)

\begin{lemma}
\label{lem:hocolim}
The induced homomorphism
\[
\HH^*(B,A) \rightarrow \HH^*(\Bl(\mathcal{B}),\Bl(\mathcal{A}))
\]
is an isomorphism.
\end{lemma}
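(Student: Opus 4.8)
The plan is to deduce this from the fact, recalled just above, that both projections $\pi\colon \Bl(\mathcal{A}) \to A$ and $\pi\colon \Bl(\mathcal{B}) \to B$ are (semi-algebraic) homotopy equivalences, together with a five-lemma argument applied to the long exact sequences of the two pairs.

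First I would note that $A \subset B$ and $\Bl(\mathcal{A}) \subset \Bl(\mathcal{B})$ are closed inclusions of closed and bounded semi-algebraic sets, so the long exact cohomology sequences of the pairs $(B,A)$ and $(\Bl(\mathcal{B}),\Bl(\mathcal{A}))$ are available (cf.\ \cite[Chapter 6]{BPRbook2}). The commutative square displayed just before the lemma --- whose vertical arrows are the two projections $\pi$ and whose horizontal arrows are the inclusions --- is exactly a morphism of pairs $(\Bl(\mathcal{B}),\Bl(\mathcal{A})) \to (B,A)$, and pulling back cohomology it induces a morphism from the long exact sequence of $(B,A)$ to that of $(\Bl(\mathcal{B}),\Bl(\mathcal{A}))$, in which the vertical maps landing on the absolute cohomology groups $\HH^{*}(B) \to \HH^{*}(\Bl(\mathcal{B}))$ and $\HH^{*}(A) \to \HH^{*}(\Bl(\mathcal{A}))$ are precisely those induced by $\pi$.

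Since $\pi$ is a homotopy equivalence in each of the two cases, these latter maps are isomorphisms in every degree; consequently, in the relevant strip of five consecutive terms $\HH^{i-1}(B) \to \HH^{i-1}(A) \to \HH^{i}(B,A) \to \HH^{i}(B) \to \HH^{i}(A)$ of the source sequence (and the analogous strip of the target sequence), the four outer vertical maps are isomorphisms. The five lemma then gives that the middle vertical map $\HH^{i}(B,A) \to \HH^{i}(\Bl(\mathcal{B}),\Bl(\mathcal{A}))$ is an isomorphism for every $i$, which is the assertion.

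The one place where some care is needed --- and what I regard as the only real, if minor, obstacle --- is checking that everything lives in the semi-algebraic category over the (possibly non-archimedean) real closed field $\R$: that $\Bl(\mathcal{A})$ and $\Bl(\mathcal{B})$ are closed and bounded semi-algebraic sets (they are finite unions of products $A_J \times \Delta_J$ glued along semi-algebraic identifications), that the projections $\pi$ and the inclusions are semi-algebraic maps, and that the homotopy inverses furnished by the Vietoris--Begle theorem can be taken semi-algebraic, so that semi-algebraic cohomology, its homotopy invariance, and the long exact sequence of a closed semi-algebraic pair all apply. Granting these standard facts, the five-lemma argument above goes through without change.
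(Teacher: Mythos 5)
Your proof is correct and follows essentially the same route as the paper, which also argues by the five lemma applied to the map of long exact sequences of the pairs, using that $\pi^*\colon \HH^*(A) \to \HH^*(\Bl(\mathcal{A}))$ and $\HH^*(B) \to \HH^*(\Bl(\mathcal{B}))$ are isomorphisms. Your additional remarks on working in the semi-algebraic category over a possibly non-archimedean real closed field are a reasonable elaboration of details the paper leaves implicit.
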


\begin{proof}
The lemma is a consequence of the `five-lemma', and the fact that the 
induced homomorphisms, $\pi^*: \HH^*(A) \rightarrow \HH^*(\Bl(\mathcal{A})), 
\HH^*(B) \rightarrow \HH^*(\Bl(\mathcal{B}))$ are isomorphisms.
\end{proof}

\begin{proof}[Proof of Proposition~\ref{prop:alg}]
Let $\mathcal{A} = (S_{k, \{s\}})_{s \in T}$, and $\mathcal{B} = (S_k)_{s \in T}$.
Then, using  Lemma~\ref{lem:hocolim} and noting that $S_k^T = \bigcup_{s \in T} S_{k,\{s\}}$,
 we have that 
 \[
 \HH^*(S_k,S_k^T) \cong 
\HH^*(\Bl(\mathcal{B}) ,\Bl(\mathcal{A})).
\]

Moreover, observe that for $T'' \subset T' \subset T$, we have a commutative diagram 
\[
\begin{tikzcd}
S_{k,T'} \arrow[hookrightarrow]{r}\arrow{d}{\Psi^{(k)}_d} & S_{k,T''} \arrow{d}{\Psi^{(k)}_d} \\
\Psi^{(k)}_d(S_{k,T'}) \arrow[hookrightarrow]{r} & \Psi^{(k)}_d(S_{k,T''})
\end{tikzcd}
\]
where the horizontal arrows are inclusions.

This allows us to define a map,
$\Bl(\mathcal{B})\rightarrow \widetilde{S^{(T)}_k}$,
by 
\[
[(x;t)] \mapsto (\Psi^{(k)}_d(x);t),
\] 
where $[(x;t)]$ denotes the equivalence class of $(x;t) \in S_k \times \Delta_T$
under the equivalence relation $\sim$ in the definition
of  $\Bl(\mathcal{B})$ (cf. Definition~\ref{def:blow-up}). 
It is easy to verify that this map is well-defined and also that
it restricts to a map  $\Bl(\mathcal{A}) \rightarrow \widetilde{S_k^T}$.

Hence we have a induced map of pairs 
\begin{equation}
\label{eqn:prop:alg:map}
(\Bl(\mathcal{B}), \Bl(\mathcal{A})) \rightarrow \left(\widetilde{S^{(T)}_k}, \widetilde{S_k^T}\right).
\end{equation}
The fibers of the maps  
$\Bl(\mathcal{B})\rightarrow \widetilde{S^{(T)}_k}$,
$\Bl(\mathcal{A})\rightarrow \widetilde{S_k^T}$, defined above
are weighted Vandermonde varieties inside Weyl chambers and
are thus contractible using Proposition~\ref{prop:agk}.
Hence  the induced homomorphisms,
$\HH^*\left(\widetilde{S^{(T)}_k}\right)  \rightarrow \HH^*(\Bl(\mathcal{B}))$,
$\HH^*\left(\widetilde{S_k^T}\right) \rightarrow \HH^*(\Bl(\mathcal{A})$ are isomorphisms.

Using the `five lemma' we obtain that the homomorphism,
\[
\HH^*\left(\widetilde{S^{(T)}_k},\widetilde{S_k^T}\right) \rightarrow \HH^*(\Bl(\mathcal{B}) ,\Bl(\mathcal{A}))
\]
induced by the map in 
\eqref{eqn:prop:alg:map} 
is an isomorphism.
This proves the proposition.
\end{proof}

\subsection{Algorithm for computing a semi-algebraic description of the pair 
$\left(\widetilde{S^{(T)}_k},\widetilde{S_k^T}\right)$}
\label{subsec:alg:tilde}
We now describe an efficient algorithm which takes as input the semi-algebraic description of a symmetric
semi-algebraic subset $S \subset \R^k$, which uses symmetric polynomials of degree at most $d$, and 
produces semi-algebraic descriptions of $\widetilde{S^{(T)}_k}$ and $\widetilde{S_k^T}$.

\begin{algorithm}[H]
\caption{(Computing  semi-algebraic descriptions  of $\left(\widetilde{S^{(T)}_k},\widetilde{S_k^T}\right)$}
\label{alg:tilde}
\begin{algorithmic}[1]
\INPUT
\Statex{
\begin{enumerate}
\item Integers $k,d \geq 0$, $d \leq k$;
\item
a finite set $\mathcal{P} \subset \D[X_1,\ldots,X_k]^{\mathfrak{S}_k}_{\leq d}$;
\item
a $\mathcal{P}$-closed formula, $\Phi$ such that $\RR(\Phi) = S$;
\item
$T \subset \Coxeter(k)$.
\end{enumerate}
 }
 \OUTPUT
 \Statex{
 \begin{enumerate}
 \item An ordered domain $\D$ contained in a real closed field $\R$;
 \item
 A finite family of polynomials $\widetilde{\mathcal{Q}} \subset \D[(Y_s)_{s \in T}, Z_1,\ldots,Z_d]$;
 \item
 $\widetilde{\mathcal{Q}}$ formulas, $\widetilde{\Phi^{(T)}_k}$ and $\widetilde{\Phi^T_k}$, such that
 $\RR(\widetilde{\Phi^{(T)}_k}) = \widetilde{S^{(T)}_k}$ and $\RR(\widetilde{\Phi^T_{k}}) = \widetilde{S_k^T}$.
 \end{enumerate}
 }

\PROCEDURE

\For  {$\lambda \in \CompMax(k,d)$}  \label{line:alg:tilde:for:1}
\State{Using the algorithm from \cite[Corollary 6]{BC-selecta} applied to the family $\mathcal{P}$, the formula
$\Phi \wedge \bigwedge_{1 \leq i \leq k-1} (X_i \leq X_{i+1})$, and the linear equations defining 
the subspace $L_\lambda$ containing the face $\Weyl_\lambda$,
and the polynomial map $\Phi^{(k)}_d$,
obtain
a family of polynomials formula $\mathcal{Q}_\lambda \subset \R[Z_1,\ldots,Z_d]$, and 
$\mathcal{Q}_\lambda$-formula $\Phi_\lambda$, such that 
$\RR(\Phi_\lambda) = \Psi^{(k)}_d(S \cap \Weyl_{\lambda})$.} \label{line:alg:tilde:1}
\EndFor

\State {$\Theta \gets (\sum_{s \in T} Y_s -1 = 0) \wedge \bigwedge_{s \in T} (Y_s  \geq 0)$.}
\State {$\widetilde{\mathcal{Q}} \gets 
\{\sum_{s \in T} Y_s -1 \} \cup \bigcup_{s \in T } \{Y_s \}
\cup \bigcup_{\lambda \in \CompMax(k,d)} \mathcal{Q}_\lambda $.}
\State {$ \widetilde{\Phi^{(T)}_k} \gets \Theta \wedge  \bigvee_{\lambda \in \CompMax(k,d)} \Phi_\lambda $.}
\label{line:alg:tilde:2}

\For {$T' \subset T$}  \label{line:alg:tilde:for:2}
 	\For {$\mu \in \CompMax(\length(\lambda(T')),d)$}   \label{line:alg:tilde:for:3}
		\State{Using the algorithm from \cite[Corollary 6]{BC-selecta} applied to the family $\mathcal{P}$, the formula
		$\Phi \wedge \bigwedge_{1 \leq i \leq k-1} (X_i \leq X_{i+1})$, the linear equations defining the subspace 
		the face $\iota_{\mu} (\Weyl^{(\length(T'))}_{\lambda})$,
		and the polynomial map $\Phi^{(k)}_d$,
		obtain a family of polynomials formula $\mathcal{Q}_{T',\mu}  \subset \R[Z_1,\ldots,Z_d]$, and 
		$\mathcal{Q}_{T', \mu}$-formula $\Phi_{T',\mu}$, such that 
		$\RR(\Phi_{T',\mu} ) = \Phi^{(k)}_d(S \cap \iota_{\mu} (\Weyl^{(\length(T'))}_{\mu})$.}\label{line:alg:tilde:3}

	\EndFor

\algstore{myalg}
\end{algorithmic}
\end{algorithm}
 
\begin{algorithm}[H]
\begin{algorithmic}[1]
\algrestore{myalg}

	\State {$\Phi_{k,T'} = \bigvee_{\mu \in \CompMax(\length(\lambda(T')),d)} \Phi_{T',\mu} \wedge (\sum_{s \in T'}Y_s -1=0) \wedge \bigwedge_{s \in T- T'} (Y_s = 0)$.}
\EndFor

\State{ \[
\widetilde{\mathcal{Q}} \gets \widetilde{\mathcal{Q}} \cup \bigcup_{T' \subset T} \bigcup_{\mu \in \CompMax(\length(\lambda(T')),d)} \mathcal{Q}_{T', \mu}.
\]
}

\State{
$\widetilde{\Phi_k^T} \gets \bigvee_{T' \subset T} \Phi_{k,T'}$.}\label{line:alg:tilde:4}
\end{algorithmic}
\end{algorithm}

\begin{proposition}
\label{prop:alg:tilde}
Algorithm \ref{alg:tilde} is correct and its complexity, measured by the number of arithmetic operations in the
domain $\D$, is bounded by 
\[
(s k d)^{O(d +\card(T))}.
\] 
Moreover, 
$\card(\widetilde{\mathcal{Q}}) \leq (s k d)^{O(d +\card(T))}$, and the degrees of the polynomials in 
$\widetilde{\mathcal{Q}}$ are bounded by $d^{O(d +\card(T))}$.
\end{proposition}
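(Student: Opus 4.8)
The plan is to split the proof into a correctness part and a complexity part, and within each to handle the two output pairs $\widetilde{\Phi^{(T)}_k}$ and $\widetilde{\Phi_k^T}$ by the same recipe, the essential inputs being Proposition~\ref{prop:arnold}, Corollary~\ref{cor:arnold}, and the cited image-computation algorithm of \cite[Corollary 6]{BC-selecta}.

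\textbf{Correctness.} First I would unwind Definition~\ref{def:tilde}. For $\widetilde{S^{(T)}_k} = \Psi^{(k)}_d(S_k)\times\Delta_T$: by Part~(\ref{itemlabel:prop:arnold:c}) of Proposition~\ref{prop:arnold} we may replace $S_k$ by $S_{k,d} = S\cap\Weyl^{(k)}_d$, and since $\Weyl^{(k)}_d = \bigcup_{\lambda\in\CompMax(k,d)}\Weyl_\lambda$ (each $\lambda'\prec\lambda$ contributes a face $\Weyl_{\lambda'}\subseteq\Weyl_\lambda$, so the union over $\Comp(k,d)$ collapses to the union over $\CompMax(k,d)$), we obtain $\Psi^{(k)}_d(S_k) = \bigcup_{\lambda\in\CompMax(k,d)}\Psi^{(k)}_d(S\cap\Weyl_\lambda)$. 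By the specification of the algorithm invoked in Line~\ref{line:alg:tilde:1}, $\RR(\Phi_\lambda) = \Psi^{(k)}_d(S\cap\Weyl_\lambda)$, and since $\Theta$ cuts out $\Delta_T$ in the coordinates $(Y_s)_{s\in T}$, the formula $\widetilde{\Phi^{(T)}_k}$ assembled in Line~\ref{line:alg:tilde:2} realizes exactly $\Psi^{(k)}_d(S_k)\times\Delta_T = \widetilde{S^{(T)}_k}$. For $\widetilde{S_k^T} = \bigcup_{T'\subseteq T}\Psi^{(k)}_d(S_{k,T'})\times\Delta_{T'}$, I would argue per subset $T'\subseteq T$: with $\ell = \length(\lambda(T'))$, Corollary~\ref{cor:arnold} lets us replace $S_{k,T'}$ by $S_{k,T',d}$, and $\Weyl^{(k)}_{T',d} = \iota_{\lambda(T')}(\Weyl^{(\ell)}_d) = \bigcup_{\mu\in\CompMax(\ell,d)}\iota_{\lambda(T')}(\Weyl_\mu)$; the inner calls in Line~\ref{line:alg:tilde:3} then produce formulas $\Phi_{T',\mu}$ realizing the $\Psi^{(k)}_d$-images of these pieces, the conjuncts $\sum_{s\in T'}Y_s-1=0 \wedge \bigwedge_{s\in T-T'}Y_s=0$ cut out $\Delta_{T'}$ inside $\R^T$, and Line~\ref{line:alg:tilde:4} assembles these into a formula realizing $\bigcup_{T'\subseteq T}\Psi^{(k)}_d(S_{k,T'})\times\Delta_{T'} = \widetilde{S_k^T}$.

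\textbf{Complexity.} I would first bound the index sets. A composition in $\CompMax(k,d)$ has its odd-position entries of index $\le d$ fixed to $1$, so it is determined by $\lfloor d/2\rfloor$ positive integers summing to $k-\lceil d/2\rceil$; hence $\card(\CompMax(k,d)) \le \binom{k}{\lfloor d/2\rfloor} \le k^{O(d)}$, and since $\length(\lambda(T')) = k - \card(T') \le k$ (the defining equalities $X_i=X_{i+1}$, $s_i\in T'$, form a forest on $\{1,\dots,k\}$), also $\card(\CompMax(\length(\lambda(T')),d)) \le k^{O(d)}$. Next, each invocation of \cite[Corollary 6]{BC-selecta} in Lines~\ref{line:alg:tilde:1} and~\ref{line:alg:tilde:3} computes the $\Psi^{(k)}_d$-image of the intersection of $S$ with a face of $\Weyl^{(k)}$ whose linear span has dimension exactly $d$; after passing to coordinates on that span this is the image of a semi-algebraic subset of $\R^d$ described by $s+O(k)$ polynomials of degree $\le d$ under a (weighted) power-sum map $\R^d\to\R^d$ of degree $\le d$, which by \cite[Corollary 6]{BC-selecta} (in the weighted form consistent with Proposition~\ref{prop:arnold}) costs $(skd)^{O(d)}$ operations in $\D$ and outputs at most $(skd)^{O(d)}$ polynomials of degree at most $d^{O(d)}$.

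\textbf{Assembly and the main obstacle.} Combining these: the loop of Line~\ref{line:alg:tilde:for:1} runs $k^{O(d)}$ times at cost $(skd)^{O(d)}$ each, with negligible Boolean assembly in Line~\ref{line:alg:tilde:2}, for a total of $(skd)^{O(d)}$; the loop of Line~\ref{line:alg:tilde:for:2} runs over $2^{\card(T)}$ subsets $T'$, each triggering $k^{O(d)}$ calls at cost $(skd)^{O(d)}$ plus $2^{O(\card(T))}$ Boolean operations, for a total $2^{\card(T)}(skd)^{O(d)} \le (skd)^{O(d+\card(T))}$. Summing gives the claimed complexity $(skd)^{O(d+\card(T))}$, and the same count yields $\card(\widetilde{\mathcal{Q}}) \le (\card(T)+1) + k^{O(d)}(skd)^{O(d)} + 2^{\card(T)}k^{O(d)}(skd)^{O(d)} = (skd)^{O(d+\card(T))}$ together with the degree bound $d^{O(d)} \le d^{O(d+\card(T))}$. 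I expect the obstacle to be bookkeeping rather than conceptual: one must check that the $\CompMax$-indexed unions reproduce $\Weyl^{(k)}_d$ and $\Weyl^{(k)}_{T,d}$ on the nose so that the assembled formulas literally match Definition~\ref{def:tilde}, and — more important for the complexity claim — that each call to \cite[Corollary 6]{BC-selecta} is genuinely a $d$-dimensional image computation; it is this dimension bound, not the ambient $k$, that keeps the per-call cost polynomial in $s$ and $k$ and hence gives the overall bound.
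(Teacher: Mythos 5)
Your proposal is correct and follows essentially the same route as the paper's own proof: correctness of $\widetilde{\Phi^{(T)}_k}$ and $\widetilde{\Phi^T_k}$ via Proposition~\ref{prop:arnold}, Corollary~\ref{cor:arnold} and the image-computation algorithm of \cite[Corollary 6]{BC-selecta} combined with Definition~\ref{def:tilde}, and the complexity bound from the three loop counts ($\card(\CompMax(k,d)) \leq k^{O(d)}$, $2^{\card(T)}$, $k^{O(d)}$) times the per-call cost of that cited algorithm. Your additional bookkeeping (collapsing the union over $\Comp(k,d)$ to $\CompMax(k,d)$, noting $\length(\lambda(T'))=k-\card(T')$, and observing that each call is effectively a $d$-dimensional image computation) only makes explicit what the paper leaves implicit.
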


\begin{proof}
It follow from Proposition~\ref{prop:arnold} and  \cite[Corollary 6]{BC-selecta},
that the first order formulas $\Phi_\lambda, \lambda \in \CompMax(k,d)$, 
computed  in Line  \ref{line:alg:tilde:1} of Algorithm
\ref{alg:tilde} have the property that
\[
\RR\left(\bigvee_{\lambda \in \CompMax(k,d)} \Phi_\lambda\right) =  \Phi^{(k)}_d(S_k).
\]
It now follows from the definition of $\widetilde{S^{(T)}_k}$ (cf. Definition~\ref{def:tilde}),
that the formula $\widetilde{\Phi^{(T)}_k}$ computed in Line~\ref{line:alg:tilde:2} in Algorithm
\ref{alg:tilde} satisfies
\[
\RR\left(\widetilde{\Phi^{(T)}_k}\right) = \widetilde{S^{(T)}_k}.
\]

Similarly, it  follows from Corollary~\ref{cor:arnold}, and  \cite[Corollary 6]{BC-selecta},
that the first order formulas $\Phi_{T',\mu}, 
\mu \in \CompMax(\length(\lambda(T')),d)$
computed  in Line  \ref{line:alg:tilde:3} of Algorithm
\ref{alg:tilde} have the property that,
\[
\RR\left(\bigvee_{\mu \in \CompMax(\length(\lambda(T')),d)} \Phi_{T',\mu}\right) =  \Phi^{(k)}_d(S_{k,T',d}).
\]
It now follows from the definition of $\widetilde{S_{k,T}}$ (cf. Definition~\ref{def:tilde}),
that the formula $\widetilde{\Phi^T_{k}}$ computed in Line~\ref{line:alg:tilde:4} of Algorithm
\ref{alg:tilde} satisfies
\[
\RR\left(\widetilde{\Phi^T_k}\right) = \widetilde{S_{k}^T}.
\]

This completes the proof of the correctness of Algorithm~\ref{alg:tilde}. 
 The complexity upper bound is a consequence of the complexity bound in \cite[Corollary 6]{BC-selecta},
 and the following:
 \begin{enumerate}[(i)]
 \item
  the number of iterations of the `\textbf{for}' loop in Line~\ref{line:alg:tilde:for:1} is bounded by 
 \[
 \card(\CompMax(k,d)) \leq k^{O(d)};
 \]
 \item
 the number of iterations of the '\textbf{for}' loop in Line~\ref{line:alg:tilde:for:2} bounded by 
 \[
 2^{\card(T)};
 \] 
 and,
 \item
 the number of iterations of the `\textbf{for}' loop in Line~\ref{line:alg:tilde:for:3}is bounded by
 \[
 \card(\CompMax(\length(T'),d)) \leq k^{O(d)}.
 \] 
 \end{enumerate}
\end{proof}

\subsection{Algorithm for computing the 
the Betti numbers of symmetric semi-algebraic sets}
\label{subsec:betti}

We are now in a position to describe our algorithm for computing 
the first $\ell+1$ Betti numbers of
symmetric semi-algebraic sets
which will finally prove Theorem~\ref{thm:alg}.

\vspace{-0.1cm}
\begin{algorithm}[H]
\caption{(Computing the
first $\ell+1$ cohomology groups of a symmetric semi-algebraic set)}
\label{alg:betti}
\begin{algorithmic}[1]
\INPUT
\Statex{
\begin{enumerate}
\item An ordered domain $\D$ contained in a real closed field $\R$;
\item Integers $k,d,\ell \geq 0$, $\ell, d \leq k$;
\item
a finite set $\mathcal{P} \subset \D[X_1,\ldots,X_k]^{\mathfrak{S}_k}_{\leq d}$;
\item
a $\mathcal{P}$-formula $\Phi$.
\end{enumerate}
 }
 \OUTPUT
 \Statex{
 The integers 
$b_0(\RR(\Phi)),\ldots,b_\ell(\RR(\Phi))$.
}

\PROCEDURE
\State{$\Phi \gets \widetilde{\Phi}_{\ell+1}$ (cf. Eqn. ~\eqref{eqn:GV}).} \label{line:alg:betti:0}
\State{$\D \gets \D' = \D[\eps,\eps_0,\delta_0,\ldots,\eps_{\ell+1},\delta_{\ell+1}]$.} \label{line:alg:betti:0.5}
\State{$ \R \gets \R'  = \R\la\eps,\eps_0,\delta_0,\ldots,\eps_{\ell+1},\delta_{\ell+1}\ra$.}

\For  {$T \subset \Coxeter(k), \card(T) <  \ell+2d -1$} \label{line:alg:betti:for:1}
	\State{Compute using Algorithm~\ref{alg:tilde}, the family of polynomials $\widetilde{\mathcal{Q}}$ and
the formulas $\widetilde{\Phi^{(T)}_k}$ and $\widetilde{\Phi_k^T}$.} \label{line:alg:betti:1}
	\State{Compute a semi-algebraic triangulation $h_T: |K_T| \rightarrow \RR\left(\widetilde{\Phi^{(T)}_k}\right)$, 
	such that $h_T^{-1}\left(\RR\left(\widetilde{\Phi^{(T)}_k}\right)\right) = |K_T'|$, $K_T'$ is a sub-complex of $K_T$, 
	as in the proof of Theorem 5.43 \cite{BPRbook2}.} \label{line:alg:betti:2}
	\State{Compute $b_i\left(\RR\left(\widetilde{\Phi^{(T)}_k}\right), \RR\left(\widetilde{\Phi_k^T}\right)\right) = b_i(K_T,K_T')$ for $0 \leq i \leq \ell$
	(using for example the Gauss-Jordan elimination algorithm from elementary linear algebra).}\label{line:alg:betti:3}
	\State{Compute using Algorithm~\ref{alg:mult}, the set $\Par(k,T)$.}
 	\For {$\lambda  \in \Par(k,T)$}  \label{line:alg:betti:for:4}
		\State{$m_{\lambda,T} \gets \mult_{\mathbb{S}^\lambda}(\Psi^{(k)}_T)$.}
	\EndFor
\EndFor

\algstore{myalg}
\end{algorithmic}
\end{algorithm}
 
\begin{algorithm}[H]
\begin{algorithmic}[1]
\algrestore{myalg}

\For {$0 \leq i \leq \ell$}
	\For {$\lambda \in \Par(k), \length(\lambda) \leq i +2d -1$} \label{line:alg:betti:for:5}
		\State{$m_{i,\lambda} \gets 0$.}
	\EndFor

	\For {$T \subset \Coxeter(k),  \card(T) < i+2d -1$}  \label{line:alg:betti:for:6}
		\For {$\lambda \in \Par(k,T)$}
			\State{$m_{i,\lambda} \gets m_{i,\lambda} + b_i\left(\RR\left(\widetilde{\Phi^{(T)}_k}\right), \RR\left(\widetilde{\Phi_k^T}\right)\right)\cdot m_{\lambda,T}$.}
		\EndFor
	\EndFor

	\State {
		\[
		b_i(\RR(\Phi)) \gets  \sum_{\lambda \in \Par(k), \length(\lambda) \leq i +2d -1} 
m_{i,\lambda}\cdot \dim_\Q \mathbb{S}^\lambda,
		\]
calculating $\dim_\Q \mathbb{S}^\lambda$ using Eqn. \eqref{eqn:hook}.}
\EndFor
\end{algorithmic}
\end{algorithm}

\begin{proposition}
\label{prop:alg:betti}
Algorithm \ref{alg:betti} is correct and has complexity,
measured by the number of arithmetic operations in the
domain $\D$,
bounded by $(s  kd )^{2^{O(d + \ell)}}$. 

If $\D = \Z$, and the bit-sizes of the coefficients of the input is bounded by $\tau$, then the bit-complexity
of  Algorithm \ref{alg:betti} is bounded by 
\[
(\tau s k d)^{2^{O(d+\ell)}}.
\]
\end{proposition}
\begin{proof}
First observe that
 the formula $\widetilde{\Phi}_{\ell+1}$ in 
Line~\ref{line:alg:betti:0} is a $\widetilde{P}_{\ell+1}$-closed formula, where
\[
\widetilde{P}_{\ell+1} \subset \D[\eps,\eps_0,\delta_0,\ldots,\eps_{\ell+1},\delta_{\ell+1}]^{\mathfrak{S}_k}_{\leq d},
\]
and  $S= \RR\left(\widetilde{\Phi}_{\ell+1}\right)$ is closed and bounded. \\

Moreover, using Remark~\ref{rem:GV}, we have that 
\begin{equation}
\label{eqn:alg:mult:GV}
b_i(\RR(\Phi)) = b_i(S), 0 \leq i \leq \ell.
\end{equation}

It follows from Proposition~\ref{prop:alg:tilde}, that  the  pair of formulas $\left(\widetilde{\Phi^{(T)}_k}, \widetilde{\Phi^T_k}\right)$ 
computed in Line~\ref{line:alg:betti:1}  of Algorithm~\ref{alg:betti} has the property that,
\[
\left(\RR\left(\widetilde{\Phi^{(T)}_k}\right),\RR\left(\widetilde{\Phi^T_k}\right)\right) = \left(\widetilde{S^{(T)}_k},\widetilde{S_k^T}\right).
\]
It follows from Proposition~\ref{prop:alg}, that
\[
\HH^*\left(\widetilde{S^{(T)}_k},\widetilde{S_k^T}\right) \cong \HH^*(S_k,S_k^T),
\]
and it follows from 
Theorem 5.43 in \cite{BPRbook2},
that the numbers $b_i\left(\widetilde{S^{(T)}_k},\widetilde{S_k^T}\right) = b_i(S_k,S_k^T)$ are computed correctly in Line~\ref{line:alg:betti:3}  of Algorithm~\ref{alg:betti} (for
$0 \leq i \leq \ell$). \\

It follows from Theorem~\ref{thm:Davis}  that,
\begin{equation}
\label{eqn:proof-alg-betti}
b_i(S) = \sum_{T \subset \Coxeter(k)} b_i(S_k,S_k^T) \cdot \dim \Psi^{(k)}_T.
\end{equation}

It follows from  \eqref{eqn:restriction-on-T} that the sum on the right hand
side of Eqn. \eqref{eqn:proof-alg-betti} needs to  be taken only over those $T \subset \Coxeter(k)$, satisfying
$\card(T) < i+2d-1$,
i.e.
\[
b_i(S) = \sum_{T \subset \Coxeter(k), \card(T) < i+2d -2} b_i(S_k,S_k^T) \cdot \dim \Psi^{(k)}_T.
\]
The correctness of the algorithm now follows from 
Proposition~\ref{prop:alg:mult} and \eqref{eqn:alg:mult:GV}. \\

In order to analyze the complexity, first notice that in  Line~\ref{line:alg:betti:0.5}, the ordered domain
$\D$ is replaced by the ordered domain $\D' = \D[\eps,\eps_0,\delta_0,\ldots,\eps_{\ell+1},\delta_{\ell+1}]$.
Each subsequent arithmetic
operation takes place in the larger domain 
\[
\D' = \D[\eps,\eps_0,\delta_0,\ldots,\eps_{\ell+1},\delta_{\ell+1}].
\] 
Since
the number of arithmetic operations in $\D$ needed for computing the sum and the product of two polynomials
in $\D'$ of degrees bounded by $D$  is at most $D^{O(\ell)}$, and the degrees of the polynomials
in $\D'$ that show up in the intermediate computations are well controlled, it suffices to bound the number 
of arithmetic operations in the new ring $\D'$. \\

The number of iterations of the `\textbf{for}' loop in Line \ref{line:alg:betti:for:1} is bounded by 
$\binom{k-1}{\ell + 2d -2} = k^{O(d+\ell)}$.
In each iteration, notice that  the semi-algebraic sets $\widetilde{S^{(T)}_k}, \widetilde{S_k^T} \subset
\R^{\card(T)} \times \R^d$, and thus the number of variables in the calls to the 
triangulation algorithm in Line~ \ref{line:alg:betti:2}
equals 
$\card(T) + d \leq (\ell +2d -1) +d  = O(\ell +d)$. 
The number of arithmetic operations in $\D'$ in each iteration is thus bounded by 
\[
(\ell s d k)^{2^{O(d +\ell)}}
\]
from the complexity bounds in Propositions~\ref{prop:alg:mult}, \ref{prop:alg:tilde}, and
the complexity 
of the triangulation algorithm. \\

Since, the degrees of the polynomials appearing in the computations are bounded by $d^{2^{O(d+\ell)}}$, it follows that
the number of arithmetic operations in $\D$ is also bounded by 
\[
(\ell s d k)^{2^{O(d +\ell)}}.
\]

It follows from Proposition~\ref{prop:alg:mult}, that the number of iterations of the 
`\textbf{for}' loop in Line \ref{line:alg:betti:for:4}  is bounded by 
$k^{O((d+\ell)^2)}$.
Also, the number of iterations of the 
`\textbf{for}' loop in Line \ref{line:alg:betti:for:5} is bounded by $k^{O(d+\ell)}$ using the trivial upper bound
on the number of partitions of $k$ of length bounded by  
$\ell + 2d -1$
and the
 number of iterations of the 
`\textbf{for}' loop in Line \ref{line:alg:betti:for:6} is bounded by 
$\binom{k-1}{\ell + 2d -2} = k^{O(d+\ell)}$.
Thus, the complexity of the whole algorithm is bounded by 
\[
(\ell s d k)^{2^{O(d +\ell)}}.
\]

The bit bound on the complexity follows in a standard manner by keeping track of the bit-lengths ofthe  integers occurring in the intermediate computations and using standard algorithms for arithmetic over integers. We omit the details.
\end{proof}

\begin{proof}[Proof  of Theorem~\ref{thm:alg}]
The theorem follows directly from Proposition~\ref{prop:alg:betti}. 
\end{proof}

\appendix
\section{}
\label{sec:Appendix}

This section is divided into two subsections.
The first subsection consists of fairly standard material on representation theory of  finite groups,
and in the second subsection we discuss the representation theory of the symmetric groups.
We chose to include this in order to make the paper reasonably self-contained. 
A standard reference for this material for this material is Serre's classic book \cite{Serre-book}. However, in Serre's book the field of scalars is taken (in most parts)  to be algebraically closed.  Since we
consider representations over $\Q$, we refer the reader to the book \cite{Procesi-book} for the basic
results listed below.

\subsection{A quick digest of representation theory of finite groups}
\label{subsec:primer}

In this paper we only consider group representations over the field $\Q$. So all vector spaces 
in the following are finite dimensional $\Q$-vector spaces and all groups are finite.

\begin{definition}[Representations of a group $G$]
\label{def:G-module}
A representation of a group $G$  is a group homomorphism $\rho:G \rightarrow \mathrm{GL}(V)$
for some vector space $V$. The representation $\rho$ induces a left action of 
the group $G$ on the vector space $V$, by $g \cdot v = \rho(g)(v), v \in V$, making $V$ into a \emph{left $G$-module}. We will use  the language of representations and modules interchangeably.

We call $\dim_\Q V$  the \emph{dimension of the representation $\rho$} .
\end{definition}

\begin{definition}[Morphism of representations]
Given two representations $\rho_1: G \rightarrow \GL(V_1), \rho_2: G \rightarrow \GL(V_2)$,
a homomorphism $T: V_1 \rightarrow V_2$, is a \emph{morphism of $G$-modules} (or equivalently, an \emph{intertwining operator}) if it satisfies,
\[
\rho_2(g) \circ T (v_1) = T \circ \rho_1(g) (v_1),
\]
for all $g \in G, v_1 \in V_1$.  

The representations $\rho_1,\rho_2$ are \emph{equivalent}  if there exists a morphism of $G$-modules
$T:V_1 \rightarrow V_2$ which is an isomorphism. 
\end{definition}

Two canonically defined examples will play an important role.

\begin{enumerate}
\item
The one dimensional representation, corresponding to the constant homomorphism
$G \rightarrow \mathrm{Id}_V$, where $V$ is a one-dimensional vector space is called the \emph{trivial 
representation of $G$} (denoted $1_{G}$).
\item
Let $A = \Q[G]$ denote the \emph{group algebra}  of $G$. Then, $A$ has a natural structure of a  left $G$-module. The corresponding representation is called the \emph{regular representation of $G$}.
The dimension of the regular representation of $G$ is clearly equal to the order of the group  $G$.
  \end{enumerate}

\begin{definition} [Direct sums and tensor products of representations $\cdot \oplus \cdot, \cdot \otimes \cdot,
\cdot \boxtimes \cdot$]
Given two representations $\rho_1:G \rightarrow V_1, \rho_2:G \rightarrow V_2$:
\begin{enumerate}
\item 
One defines
the representation $\rho_1\oplus\rho_2:G \rightarrow \GL(V_1 \oplus V_2)$ by
$(\rho_1\oplus \rho_2)(g)(v_1\oplus v_2) = \rho_1(g)(v_1) \oplus \rho_2(g)(v_2)$.
\item
Similarly the representation $\rho_1\otimes \rho_2: G \rightarrow \GL(V_1 \otimes_\Q V_2)$ is defined by
$(\rho_1 \otimes \rho_2)(g)(v_1 \otimes v_2) = \rho_1(g)(v_1) \otimes \rho_2(g)(v_2)$.
\end{enumerate}
Given two representations $\rho_1:G_1 \rightarrow \GL(V_1), \rho_2:G_2 \rightarrow \GL(V_2)$,
we define a representation $\rho_1 \boxtimes \rho_2:  G_1 \times G_2 \rightarrow \GL(V_1\otimes_\Q V_2)$
of the direct product group $G_1 \times G_2$ by
\[
(\rho_1 \boxtimes \rho_2)(g_1,g_2)(v_1 \otimes v_2) = \rho_1(g_1)(v_1) \otimes \rho_2(g_2)(v_2).
\]
\end{definition}

\begin{definition}[Irreducible representations]
\label{def:irreducible}
A representation $\rho:G \rightarrow \GL(V)$ is irreducible if $V$ does not contain a non-zero proper
sub-representation (i.e. a non-zero proper subspace $W \subset V$ which is closed under $\rho(g)$ for every $g \in G$). 
\end{definition}

\begin{lemma}[Schur's Lemma] \cite[Corollary on page 151]{Procesi-book}
\label{lem:Schur}
Let $\rho_1:G \rightarrow V_1, \rho_2:G \rightarrow V_2$ be two irreducible representations, and
$T:V_1\rightarrow V_2$ an intertwining operator. Then $T$  is either $0$ or an isomorphism. 
\end{lemma}

\begin{definition}
\label{def:isotypic}
Let $\alpha$ be an isomorphism class of irreducible representations of a finite group $G$ and let
$M$ be a finite-dimensional $G$-module. Let $M^\alpha$ denote the sum of all submodules
of $M$ isomorphic to $\alpha$. 
We call $M^\alpha$ to be the \emph{isotypic  ccomponent of $M$ of type $\alpha$}.
\end{definition}

With the same notation as in Definition~\ref{def:isotypic}:
\begin{theorem}[Isotypic decomposition] \cite[Theorem, Section 2.3]{Procesi-book}
\label{thm:isotypic}
The isotypic components give a direct sum decomposition of $M$.
\end{theorem} 

Moreover, Lemma~\ref{lem:Schur}  (Schur's Lemma) implies the following.

\begin{theorem}
\label{thm:isotypic-canonical}
Suppose that $M$ and $N$ are two finite dimensional $G$-module, $\alpha$ an isomorphism class of 
irreducible $G$-modules and $f:M \rightarrow N$ a morphism of $G$-modules. Then, there is 
a commutative diagram of $G$-module homomorphisms
\[
\xymatrix{
M \ar[r]^{f} \ar[d] & N \ar[d]\\
M^\alpha \ar[r]^{f|_{M^\alpha}} & N^\alpha
}
\]
where the vertical arrows are canonical projections.
\end{theorem}

Finally, with the same hypothesis as Definition~\ref{def:isotypic}:
\begin{proposition}\cite[Section 2.1, Corollaries 1,2]{Procesi-book}
\label{prop:definition-of-mult}
Each $M^\alpha$ is (non-canonically) isomorphic to the direct sum of $m_\alpha$ copies of 
the irreducible representation of type $\alpha$ for some $m_\alpha \geq 0$. 
\end{proposition}

\begin{definition}[Multiplicity]
\label{def:mult}
We will call the non-negative integer $m_\alpha$ 
that appears in Proposition~\ref{prop:definition-of-mult}
to be \emph{the multiplicity of $\alpha$ in $M$}, and denote 
\[
m_\alpha = \mult_\alpha(M).
\]
It follows from Theorem~\ref{thm:isotypic-canonical} that 
$\mult_\alpha(M)$ is well defined.
\end{definition}

It is obvious that a representation of a group $G$ restricts to a representation of any subgroup of $G$.
It is less obvious how to lift a representation of  a subgroup of $G$ to a representation of $G$ itself.
There is in fact a canonically defined lift which is referred to as the induced representation. 
The notion of induced representations is used in Lemma~\ref{lem:Pieri} in the paper.

The construction of the induced representation is best stated in the language of modules.
 Let $H \subset G$ be a subgroup of $G$, and let $\rho:H \rightarrow V$ be a representation of
$H$. Then $V$ is naturally a left $\Q[H]$-module and $\Q[G]$ a right $\Q[H]$-module. 

\begin{definition}[Induced representation]
\label{def:induced}
We denote by $\ind_{H}^{G} V$ the left $G$-module $\Q[G] \otimes_{\Q[H]} V$ (called the 
\emph{induced representation} of $V$ on $G$). 
\end{definition}

\subsection{Representation theory of symmetric groups}
\label{subsec:Specht}

In this paper we are concerned with the representations of the symmetric groups $\mathfrak{S}_k$
and certain subgroups of the symmetric groups. We state below the main definitions and results related
to this very classical topic.

\begin{notation}[Partitions and compositions]
  \label{not:Partition1} 
  We denote by $\Par(k)$ the set of \emph{partitions} of $k$, where each partition $\lambda \in \Par(k)$
  (also denoted $\lambda \vdash k$) 
  is a tuple  $(\lambda_{1} , \lambda_{2} , \ldots
  , \lambda_{\ell})$, with $\lambda_{1} \geq \lambda_{2} \geq \cdots \geq
  \lambda_{\ell} \geq 1$, and $\lambda_{1} + \lambda_{2} + \cdots + \lambda_{\ell} =k$. We
  call $\ell$ the length of the partition $\lambda$, and denote
  $\length(\lambda) = \ell$.

  A tuple  $(\lambda_{1} , \lambda_{2} , \ldots
  , \lambda_{\ell})$,  with  $\lambda_{1} + \lambda_{2} + \cdots + \lambda_{\ell} =k$ (but not necessarily non-increasing)
  will be called  a \emph{composition}, and we still
  call $\ell$ the length of the composition  $\lambda$, and denote
  $\length(\lambda) = \ell$. The set of of all compositions of $k$ will be denoted by $\Comp(k)$.

  \end{notation}

\begin{notation}[Transpose of a partition]
\label{not:Partition2}
For a partition $\lambda =(\lambda_1,\ldots,\lambda_\ell) \vdash k$, we will denote by $^{t}\lambda$ the \emph{transpose} of $\lambda$.
More precisely,
$^{t}\lambda = (^{t}\lambda_1, \ldots,^{t}\lambda_{\tilde{\ell}})$, where $^{t}\lambda_j = \card(\{i \mid \lambda_i \geq j \})$, and $\tilde{\ell} = \lambda_1$.
\end{notation}

\begin{definition}[Young diagrams]
\label{def:young-diagram}
Partitions are often identified with Young diagrams. We follow the English convention and associate the partition
$\lambda = (\lambda_1,\lambda_2,\ldots)$ with the Young diagram with its $i$-th row consisting of $\lambda_i$ boxes.
Thus, the Young diagram corresponding to the partition $\lambda = (3,2)$ is
\[
\yng(3,2),
\]
the Young diagram  associated to its transpose, $^{t}\lambda = (2,2,1)$,  is 
\[
\yng(2,2,1)
\]
(note  that the Young diagram of $^{t}\lambda$ is obtained by reflecting the Young diagram of $\lambda$ 
about its diagonal).
Thus, for any partition $\lambda$, $\length(\lambda)$  (respectively $\length(^t \lambda)$) equals the number of \emph{rows} 
(respectively columns) of the Young diagram of $\lambda$ (respectively $^t\lambda$).
\end{definition} 

\begin{definition}[Young's tableau]
\label{def:Young-tableau}
A \emph{tableau} on a given Young diagram corresponding to a partition $\lambda \vdash k$ is a filling 
of its squares with $1,\ldots, k$ (with no repetitions).
\end{definition} 

The representation theory of the symmetric groups, $\mathfrak{S}_k$,  is a classical subject (see for example  \cite{Ceccherini-book} for details) and 
it is well known that the irreducible representations  (Specht modules) of $\mathfrak{S}_k$ 
are indexed by partitions of $k$.

These are defined as follows.
\begin{definition}[Specht modules]
\label{def:Specht}
Let $A = \Q[\mathfrak{S}_k]$ be the group algebra of $\mathfrak{S}_k$. Then $A$ is a $\Q$-vector space of 
dimension $k!$ and left multiplication by elements of $\mathfrak{S}_k$ makes $A$ into a $\mathfrak{S}_k$-module
(usually referred to as the regular representation of the group $\mathfrak{S}_k$).

For $\lambda \vdash k$, 
fix a tableau $T$ on the Young diagram of $\lambda$.
Let 
$P_\lambda \subset \mathfrak{S}_k$ be the set of permutations that stabilizes the rows of the tableau $T$,
and similarly $Q_\lambda \subset \mathfrak{S}_k$ be the set of permutations that fixes the columns of $T$. \\

Let
\begin{eqnarray}
\nonumber
a_\lambda &=& \sum_{w \in P_\lambda} w, \\
\label{eqn:Young-symmerizer}
b_\lambda &=& \sum_{w \in Q_\lambda} \sign(w) w, \\
\nonumber
c_\lambda &=&  a_\lambda b_\lambda.
\end{eqnarray} 
Then the left ideal $A c_\lambda$ of $A$ is an irreducible $\mathfrak{S}_k$-module, and we denote it 
$\mathbb{S}^\lambda$ (the Specht module corresponding to $\lambda$). It is easy to check that
for $\lambda = (k)$,
$\mathbb{S}^{(k)}$ is isomorphic to the one-dimensional trivial representation which we will also denote by $1_{\mathfrak{S}_k}$,
and for $\lambda = (1,\ldots,1)$ (often denoted by $1^k$),
the Specht module $\mathbb{S}^{(1^k)}$ is isomorphic to the one-dimensional sign representation which we will also denote by $\mathbf{sign}_k$.
\end{definition}

\begin{definition}[Hook lengths]
Let $B(\lambda)$ denote the set of boxes in the  Young diagram  (cf. Definition \ref{def:young-diagram}) corresponding to a partition 
$\lambda \vdash k$. For a box $b \in B(\lambda)$, the length of the hook of $b$, denoted 
$h_b$ is the number of boxes strictly to the right and below $b$ plus $1$.
\end{definition}

The following classical formula (due to Frobenius)
gives the dimension of the representation $\mathbb{S}^{\lambda}$ in terms of the hook lengths of the partition $\lambda$.

\begin{eqnarray}
\label{eqn:hook}
\dim_\Q \mathbb{S}^{\lambda} &=&  \frac{k!}{\prod_{b \in B(\lambda)} h_b}.
\end{eqnarray}

\section*{Acknowledgment}
We thank the anonymous referees for many suggestions that helped us improve the paper. In particular,
we are grateful to one of the referees for pointing out an error in the proof of Proposition~\ref{prop:Solomon} (which also caused Algorithm~\ref{alg:mult} to be erroneous)  in the
previous version of this paper. 

We also thank the Institut Henri Poincar\'e in Paris  for hosting us
during the period when this work was conceived.

\bibliographystyle{amsplain}
\bibliography{master}

\end{document}